\documentclass[reqno, 11pt]{amsart}
\usepackage[text={155mm,235mm},centering]{geometry}
\input diagxy
\input xy

\usepackage[active]{srcltx} % SRC Specials

\newtheorem{thm}{Theorem}[section]

\newtheorem{lem}[thm]{Lemma}
\newtheorem{prop}[thm]{Proposition}

\theoremstyle{definition}
\newtheorem{defn}[thm]{Definition}
\theoremstyle{property}

\theoremstyle{remark}

\newtheorem{ex}[thm]{Example}
\numberwithin{equation}{section}
\newtheorem*{thm*}{Theorem}
\usepackage[mathscr]{eucal}
\usepackage[all]{xy}
\usepackage{mathrsfs}
\usepackage{xypic}
\usepackage{amsfonts}
\usepackage{amsmath}
\usepackage{amsthm,bm}
\usepackage{amssymb,tikz-cd}
\usepackage{latexsym}
\usepackage{tabularx}
\usepackage{graphicx}
\usepackage{pict2e}
\usepackage{hyperref}
\usepackage{tikz}
\usepackage{textcomp}
\usepackage[scr=rsfs]{mathalfa}
\usepackage[all]{xy}
\usepackage{tikz-cd}
\definecolor{ceruleanblue}{rgb}{0.16, 0.32, 0.75}
\hypersetup{colorlinks=true,allcolors=ceruleanblue}
\allowdisplaybreaks
\begin{document}

\title[A Transverse Averaging Operator and Cohomology of G/H]
{A Transverse Averaging Operator and the Cohomology of Homogeneous Quotients by Non-closed Subgroups}
%---------------------------------------
\author{Yi Lin}

\address{Y. Lin \\
Department of Mathematical Sciences \\
Georgia Southern University \\
Statesboro, GA, 30460 USA}
\email{yilin@georgiasouthern.edu}
%================================

%--------------------------------------

\subjclass[2020]{53C12, 57R30, 57S25}
\keywords{Riemannian foliations, basic cohomology, Lie foliation, diffeological de Rham cohomology, homogeneous spaces}
\date{\today}

%\dedicatory{ }

%\commby{}
%---------------------------------
\begin{abstract}

In this article, we introduce a transverse averaging operator for basic forms
on a complete Riemannian foliation with compact leaf closure space, equipped
with an isometric transverse Lie algebra action. In contrast to the classical
averaging operator in equivariant geometry, which is defined by integration
over a compact Lie group, our operator is constructed purely from
infinitesimal transverse data and does not require any global group action. We
prove that every closed basic form is sent to an invariant basic form
representing the same basic cohomology class.

The main application is formulated independently of foliation theory. We
compute the diffeological de Rham cohomology of the homogeneous quotient
$G/H$, where $G$ is a connected Lie group, not necessarily compact, and $H$
is a connected Lie subgroup, not necessarily closed. Let $\mathfrak g$ and
$\mathfrak h$ be the Lie algebras of $G$ and $H$, respectively. Assuming that
$\mathfrak g$ is of compact type and that $G/\overline{H}$ is compact, we
prove that
$
H^\bullet_{\mathrm{dR}}(G/H)
\cong
H^\bullet(\mathfrak g,\mathfrak h)$.
When $\mathfrak h$ is an ideal in $\mathfrak g$, the compact-type assumption
on $\mathfrak g$ can be dropped, and under the sole assumption that
$G/\overline{H}$ is compact we obtain
$
H^\bullet_{\mathrm{dR}}(G/H)
\cong
H^\bullet(\mathfrak g/\mathfrak h)$. 
These results extend the classical Chevalley--Eilenberg computation from
compact Lie groups and closed subgroups to homogeneous quotients $G/H$ by possibly
non-closed subgroups $H$.

\end{abstract}

%-----------------------------------------------------------
\maketitle
%-----------------------------------------------------------
%\tableofcontents

\section{Introduction}
A classical theorem of Chevalley and Eilenberg~\cite{CE48} identifies the de
Rham cohomology of $G/H$ with the relative Lie algebra cohomology
$H^\bullet(\mathfrak g,\mathfrak h)$ when $G$ is compact and connected and
$H\subset G$ is closed and connected. When $G$ is noncompact or $H$ is
nonclosed, the hypotheses of this classical theorem are no longer satisfied;
moreover, if $H$ is nonclosed, the quotient $G/H$ need not be a smooth
manifold. Nevertheless, $G/H$ carries a natural quotient diffeology in the
sense of \cite{I13}, and one may therefore consider its diffeological de Rham
cohomology. It is natural to ask under what conditions the classical
Lie-algebraic computation remains valid for such quotients.

We show in this paper that the geometry of foliations provides a new and
effective approach to this question. Indeed, for a connected Lie subgroup
$H$, the orbits of the right $H$-action on $G$ define a regular foliation
$\mathcal F$ whose leaf space is precisely the diffeological quotient $G/H$.
By a theorem of Hector, Mac\'{\i}as-Virg\'os, and
Sanmart\'{\i}n-Carb\'on~\cite[Theorem~3.5]{HMS}, the diffeological de Rham
cohomology of $G/H$ is naturally identified with the basic cohomology of
$(G,\mathcal F)$. The problem is therefore reduced to a computation in basic
cohomology, which was introduced by Reinhart~\cite{Re59} as a cohomology
theory for the leaf space of a foliation. As we show later, the right
$H$-orbit foliations arising in our main applications are Riemannian, so this
reduction places the problem within the well-developed theory of basic
cohomology for Riemannian foliations.

Within the theory of Riemannian foliations, Goertsches and
T\"oben~\cite{GT10} introduced transverse Lie algebra actions and the
associated equivariant basic cohomology. The canonical transverse action of
the structural Lie algebra of a Killing foliation preserves the given
transverse Riemannian metric and is therefore isometric. Goertsches and
T\"oben proved, among other results, a Borel-type localization theorem in this
special but important setting. Lin and Sjamaar~\cite{LS20} subsequently
developed the theory for general isometric transverse Lie algebra actions on
Riemannian foliations and extended the localization theorem to this broader
setting. More recently, Lin and Wang~\cite{LW24} developed an equivariant
Morse--Bott theory for isometric transverse Lie algebra actions on general
Riemannian foliations.

As observed by Goertsches and T\"oben~\cite[Sec.~3.6]{GT10}, 
no general averaging procedure is available in this framework.
In classical equivariant
geometry, averaging over a compact and connected Lie group allows one to
replace arbitrary closed differential forms by invariant representatives
without changing their cohomology classes. For an isometric transverse Lie
algebra action, however, there need not be a global compact group action on
the foliated manifold $M$, so the classical averaging construction is
unavailable. In this paper, we construct a transverse averaging operator from
infinitesimal transverse data, without assuming the existence of any global
group action on $M$.

Our construction combines several ingredients from the theory of Riemannian
foliations, notably Molino's structural theory, the framework for isometric
transverse Lie algebra actions developed by Lin and Sjamaar, Sergiescu's
transverse integration, and basic Poincar\'e duality. Following \cite{LS20},
we pass to the transverse orthonormal frame bundle $\pi_M:P\to M$. The
isometric transverse $\mathfrak g$-action lifts to $P$ and projects to an
isometric Lie algebra action on the Molino manifold $W$. Under our standing
assumption that the leaf closure space $M/\overline{\mathcal F}$ is compact,
the Molino manifold $W$ is compact as well. The projected infinitesimal
action on $W$ therefore consists of complete Killing vector fields and
integrates to an isometric action of the simply connected Lie group
$\widetilde G$ with Lie algebra $\mathfrak g$. Since the isometry group of a
compact Riemannian manifold is compact, the closure $N$ of the image of
$\widetilde G$ in $\mathrm{Iso}(W)$ is a compact and connected Lie group.
Although $N$ does not act directly on $M$, it induces an action on the space
of basic forms on $M$ through the geometry of the transverse orthonormal
frame bundle $P$. The averaging operator is defined by integrating this
induced action over $N$, and the fact that it preserves basic cohomology
classes is established using Sergiescu's transverse integration~\cite{Ser85}
and basic Poincar\'e duality.

The main results of this paper are as follows.

\begin{thm}\label{average-operator}
Consider an isometric transverse action of a Lie algebra $\mathfrak{g}$ on a
complete Riemannian foliation whose leaf closure space is compact. Then there exists
an averaging operator
\[
\mathcal{A}: \Omega^\bullet(M, \mathcal{F}) \to \Omega^\bullet(M, \mathcal{F})^{\mathfrak{g}},
\]
such that for every closed basic form $\beta \in \Omega^k(M,\mathcal{F})$,
the form $\mathcal{A}(\beta)$ is closed and $\mathfrak{g}$-invariant, and
satisfies
\[
[\mathcal{A}(\beta)] = [\beta] \in H^k(M,\mathcal{F}).
\]
\end{thm}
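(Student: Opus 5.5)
We follow the strategy sketched in the introduction: pass to the transverse orthonormal frame bundle $\pi_M:P\to M$ with its lifted foliation $\mathcal F_P$, then to the Molino manifold $W=P/\overline{\mathcal F_P}$, and build the operator from the compact group $N$.

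\textbf{Setup.} By the results of \cite{LS20}, the isometric transverse $\mathfrak g$-action lifts to a transverse action on $(P,\mathcal F_P)$. This lifted action commutes with the $O(q)$-action and with the basic (Molino) transverse parallelism, and it projects to an action by Killing fields on $W$. Since $M/\overline{\mathcal F}$ is compact, $W$ is compact. The action therefore integrates to $\widetilde G\to \mathrm{Iso}(W)$, and we let $N$ be the closure of the image. Then $N$ is a compact connected Lie group acting isometrically on $W$ and commuting with the induced $O(q)$-action on $W$.

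\textbf{Reduction to $W$.} Closed basic forms on $P$ are controlled by $W$ through Molino's commuting sheaf. Rather than claim $\Omega(P,\mathcal F_P)=\Omega(W)$, which fails in general, we work with Sergiescu's transverse integration. Fix the orientation line bundle and the mean-curvature-free (tense) metric, which is available by Dom\'inguez. Basic Poincar\'e duality then says that a closed basic $k$-form $\beta$ is cohomologically determined by the pairings $\int_{\mathcal F}\beta\wedge\gamma$ over closed basic forms $\gamma$ of complementary degree.

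\textbf{Definition of $\mathcal A$.} We define the action of $n\in N$ on $\Omega(M,\mathcal F)$ as follows.
\begin{enumerate}
\item Pull $\beta$ back to the $O(q)$-invariant basic form $\pi_M^*\beta$ on $P$.
\item Locally on $P$, a basic form is a form on the local transversal $P\to T$. On the leaf closures, the basic forms restricted near a closure fiber of $P\to W$ are identified with forms on the Lie-foliation transverse model $W\times$(structural Lie algebra data).
\item The flows of $\widetilde G$ act on this model. Their action on basic forms is $C^\infty$-continuous in the group element, so it extends by continuity to $N$. Here we use that $\widetilde G\to N$ has dense image and that the action on basic forms is a uniformly bounded isometric family, since the transverse metric is preserved.
\end{enumerate}
We then set
\[
\mathcal A(\beta)=\int_N n\cdot\beta\,dn,
\]
descending along $O(q)$ by invariance. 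With this definition, $\mathfrak g$-invariance follows from bi-invariance of Haar measure together with $\mathfrak g\to\mathrm{Lie}(N)$. Closedness follows because each $n\cdot$ commutes with $d$.

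\textbf{Cohomology class.} For $X\in\mathfrak g$ with transverse field $\bar X$, we have $\frac{d}{dt}\exp(tX)\cdot\beta=L_{\bar X}\beta=d\iota_{\bar X}\beta$, which is a basic exact form. So $[g\cdot\beta]=[\beta]$ for $g\in\widetilde G$. For $n\in N$, we pair with closed basic $\gamma$. The map $n\mapsto\int_{\mathcal F}(n\cdot\beta)\wedge\gamma$ is continuous on $N$ and constant on the dense image of $\widetilde G$, because transverse integration kills basic exact forms in the taut/tense setting by Sergiescu. Hence it is constant on $N$. Integrating over $N$ shows that $\mathcal A(\beta)$ and $\beta$ have equal pairings with every closed $\gamma$. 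Basic Poincar\'e duality, whose basic cohomology is finite dimensional since $M/\overline{\mathcal F}$ is compact, gives $[\mathcal A\beta]=[\beta]$. In the non-taut case we would twist by the Álvarez class and apply twisted duality.

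\textbf{Main obstacle.} The delicate step is making the $N$-action on $\Omega(M,\mathcal F)$ rigorous, since $N$ genuinely does not act on $M$ or $P$. We would first try to define it on the level of $O(q)$-invariant $\mathcal F_P$-basic forms, using the fact that the closure of the $\widetilde G$-flows inside the group of $\mathcal F_P$-foliated transverse isometries of $P$ is compact. That group acts on basic forms; we would use Molino's description of the transverse structure of $P$ as a $W$-bundle with Lie foliation fibres to justify the compactness.
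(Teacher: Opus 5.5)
Your overall architecture matches the paper's: lift to $P$, project to Killing fields on $W$, take $N=\overline{G_1}\subset\mathrm{Iso}(W)$, show $\widetilde G$ acts trivially on $H^\bullet(M,\mathcal F)$ by Cartan's formula, extend to $N$ by density, and average. However, there is a genuine gap. The step you flag as the main obstacle is the one the whole proof rests on, and you do not supply it.

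Continuity of $g\mapsto g\cdot\beta$ on $\widetilde G$ together with uniform bounds does not let you extend to $N$. A sequence $g_n\in\widetilde G$ can converge in $\mathrm{Iso}(W)$ while diverging in $\widetilde G$. So you need the action on basic forms to factor through $G_1$ and to be continuous for the topology of $\mathrm{Iso}(W)$; otherwise ``the action'' of $h\in N\setminus G_1$ is not even defined. Your fallback, compactness of the closure of the $\widetilde G$-flows inside a group of foliated transverse isometries of $P$, is only asserted. It is not clear what that group or its topology is, since $\widetilde G$ does not act on $P$. The local Lie-foliation models in your step (2) do not globalize this either. As a consequence, continuity of $n\mapsto\int_{M/\mathcal F}(n\cdot\beta)\wedge\gamma$ and $O(q)$-invariance of $n\cdot\pi_M^*\beta$ are also left unproved; the latter is not automatic, because $\eta$ is only $O(q)$-equivariant.

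The missing idea is an invariant global coframe. The components $\eta_1,\dots,\eta_q$ of the fundamental $1$-form on $P$ are $\mathcal F_P$-basic, $O(q)$-horizontal, and invariant under the lifted flows $\Phi_t$ of the transverse Killing fields. Hence $\pi_M^*\beta=\sum_I f_I\,\eta_{I}$ uniquely, with $f_I$ basic functions. These are constant on leaf closures, so they descend to $\bar f_I\in C^\infty(W)$. You are right that basic forms on $P$ are not pulled back from $W$, but their coefficients in this invariant coframe are.

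Now set $h^*\gamma:=\sum_I\pi_W^*(h^*\bar f_I)\,\eta_{I}$ for $h\in N$. Because $\Phi_t^*\eta=\eta$, this agrees with the flow pullbacks on $G_1$. It depends only on $h\in\mathrm{Iso}(W)$ and is continuous in $h$, since $h^*\bar f_I$ converges uniformly on the compact $W$. Therefore $O(q)$-invariance and $d(h^*\gamma)=h^*d\gamma$ pass from $G_1$ to $N$ by density, and the pairing is continuous.

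A secondary issue concerns duality. The tense metric (Dom\'{\i}nguez) and the \'Alvarez-class twisted duality are theorems for closed manifolds, whereas here only $M/\overline{\mathcal F}$ is assumed compact. The appropriate tool is Sergiescu's duality between $H^k(M,\mathcal F)$ and $H^{q-k}_c(M,L,\mathcal F)$, with $L=\mathcal O_1\otimes\mathcal O_2$, whose Stokes theorem needs no tautness. The test forms $\gamma$ must accordingly be closed twisted basic forms.
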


The transverse averaging operator has several applications in foliation
theory. As a first application, we describe the $E_1$-term of the spectral
sequence associated to the Cartan complex computing the equivariant basic
cohomology $H_{\mathfrak g}(M,\mathcal F)$. In the special case of Killing
foliations, for the canonical transverse action of the structural Lie
algebra, Goertsches and T\"oben~\cite[Thm.~3.23]{GT10} identified this
$E_1$-term by using the fact that every basic form is automatically invariant
under the action. For a general transverse Lie algebra action, however, they
pointed out that the situation is less clear, since
$\Omega(M,\mathcal F)^{\mathfrak g}$ may be a proper subspace of
$\Omega(M,\mathcal F)$ and no averaging process was available
\cite[Sec.~3.6]{GT10}. The transverse averaging operator constructed in
Theorem~\ref{average-operator} resolves this difficulty for isometric
transverse Lie algebra actions on complete Riemannian foliations with compact
leaf closure space.

\begin{thm}\label{thm:E1}
Let $(M,\mathcal F)$ be a complete Riemannian foliation with compact leaf
closure space, equipped with an isometric transverse $\mathfrak g$-action.
Then the $E_1$-term of the spectral sequence associated to the Cartan complex
of equivariant basic cohomology is naturally isomorphic to
\[
E_1 \cong (S\mathfrak{g}^*)^{\mathfrak{g}} \otimes H^\bullet(M,\mathcal{F}).
\]
\end{thm}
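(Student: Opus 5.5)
The plan is to follow the classical argument for compact group actions (as in Guillemin--Sternberg, or Goertsches--T\"oben \cite[Thm.~3.23]{GT10} in the Killing case), with Theorem~\ref{average-operator} supplying the averaging step that replaces integration over the group.

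\textbf{Setup.} The Cartan complex is $C_{\mathfrak g}(M,\mathcal F)=(S\mathfrak g^*\otimes\Omega(M,\mathcal F))^{\mathfrak g}$ with differential $d_{\mathfrak g}=1\otimes d-\sum_a \xi^a\otimes\iota_{X_a}$. I filter it by polynomial degree, $F^p=\bigoplus_{i\ge p}(S^i\mathfrak g^*\otimes\Omega)^{\mathfrak g}$. The term $\sum_a\xi^a\otimes\iota_{X_a}$ raises polynomial degree by one, while $1\otimes d$ preserves it. Hence $E_0^{p,\bullet}=(S^p\mathfrak g^*\otimes\Omega(M,\mathcal F))^{\mathfrak g}$, and $d_0=1\otimes d$. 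Therefore
\[
E_1^{p,\bullet}=H\bigl((S^p\mathfrak g^*\otimes\Omega(M,\mathcal F))^{\mathfrak g},\,1\otimes d\bigr).
\]
The claim reduces to showing that taking $\mathfrak g$-invariants commutes with taking cohomology, and that $\mathfrak g$ acts trivially on $H(M,\mathcal F)$. Granting these, the identification is
\[
E_1\cong\bigl(S\mathfrak g^*\otimes H(M,\mathcal F)\bigr)^{\mathfrak g}=(S\mathfrak g^*)^{\mathfrak g}\otimes H(M,\mathcal F).
\]

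\textbf{Trivial action on cohomology.} By the Cartan formula for transverse actions \cite{LS20}, $L_X=d\iota_X+\iota_X d$ on basic forms. So each $L_X$ acts by zero on $H(M,\mathcal F)$.

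\textbf{Invariants commute with cohomology.} The structure of $\mathcal A$ from the proof of Theorem~\ref{average-operator} is needed here. It is built from integration over the compact connected group $N$, acting on $\Omega(M,\mathcal F)$ through the frame bundle $P$, and the action of $\mathfrak g$ factors through $\mathfrak n=\mathrm{Lie}(N)$. Since $\widetilde G$ is dense in $N$, $\mathfrak g$-invariance is equivalent to $N$-invariance, and $\mathcal A$ is then a projection onto the invariants commuting with $d$.

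I would extend the $N$-action to $S\mathfrak g^*\otimes\Omega$. On $\mathfrak g^*$ it acts through the closure of $\mathrm{Ad}(\widetilde G)$, which is compatible since $\mathfrak g\to\mathfrak n$ is equivariant. The averaging $\mathcal A'=\int_N n\cdot(\ )\,dn$ is then a chain projection of $S^p\mathfrak g^*\otimes\Omega$ onto its invariant subcomplex. Hence $H(C^{\mathfrak g})=H(C)^N$, by the standard argument with a chain retraction that commutes with the group action.

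Finally, $H(S^p\mathfrak g^*\otimes\Omega)=S^p\mathfrak g^*\otimes H(M,\mathcal F)$. The action of $N$ on $H(M,\mathcal F)$ is trivial: it is trivial at the Lie algebra level by the Cartan formula, $N$ is connected, and the action is continuous. The continuity needs justifying, for instance via finite-dimensionality of basic cohomology for Riemannian foliations with compact leaf closure space. The invariants are therefore $(S^p\mathfrak g^*)^{\mathfrak g}\otimes H(M,\mathcal F)$. Naturality follows because the map is induced by the inclusion of $(S\mathfrak g^*)^{\mathfrak g}\otimes\Omega(M,\mathcal F)^{\mathfrak g}_{\mathrm{closed}}$, which is independent of $\mathcal A$.

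\textbf{Main obstacle.} The hard step is the previous one: verifying that the averaging operator of Theorem~\ref{average-operator} commutes with $d$ as a chain map and is compatible with the coadjoint factor $S\mathfrak g^*$. If $\mathcal A$ is only known to preserve classes of closed forms, I would argue more directly, as follows.

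\emph{Surjectivity.} Given a class in $(S\mathfrak g^*)^{\mathfrak g}\otimes H$, represent it by $f\otimes\mathcal A(\beta)$ with $\beta$ closed. This is an invariant closed element.

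\emph{Injectivity.} Suppose an invariant closed $\sum f_i\otimes\beta_i$, with the $f_i$ a basis, equals $(1\otimes d)\gamma$. Apply $1\otimes\mathcal A$, which is legitimate if $\mathcal A$ commutes with $d$ on all forms. By Theorem~\ref{average-operator} the $\beta_i$ are exact, and the preimage can be made invariant.
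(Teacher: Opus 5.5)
Your route is the same as the paper's. You filter by polynomial degree, so that $E_1^{p,\bullet}=H\bigl((S^p\mathfrak g^*\otimes\Omega(M,\mathcal F))^{\mathfrak g},d\bigr)$. You then use an averaging projection onto the diagonal $\mathfrak g$-invariants that commutes with $d$, and the Cartan formula to kill the action on $H^\bullet(M,\mathcal F)$. The paper merely asserts that $\mathcal A$ ``naturally extends'' to such a projection $\mathcal A_{\mathfrak g}\colon C^{ij}\to\Omega^{ij}_{\mathfrak g}(M,\mathcal F)$, and you rightly flag this step as the main obstacle. However, the construction you propose for it is a genuine gap, because $N$ does not act on $\mathfrak g^*$.

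$N$ is the closure of the image of $\widetilde G$ in $\mathrm{Iso}(W,g_W)$. The coadjoint representation of $\widetilde G$ need not factor through $\widetilde G\to N$, since elements acting trivially on $W$ can act nontrivially by $\operatorname{Ad}$. The paper makes exactly this remark in Section~\ref{App:Lie-foliation}, and equivariance of $\mathfrak g\to\mathfrak n$ does not repair it.

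Worse, in Theorem~\ref{thm:E1} the Lie algebra $\mathfrak g$ is arbitrary; only its image in $\mathrm{iso}(W,g_W)$ is of compact type. So $\overline{\operatorname{Ad}(\widetilde G)}$, and even the graph closure in $N\times\mathrm{GL}(\mathfrak g)$, can be noncompact. For example, let $\mathfrak g=\mathfrak{heis}_3$ with $[x,y]=z$ act on $T^2=\mathbb R^2/2\pi\mathbb Z^2$ (point foliation) by $x\mapsto\partial_{\theta_1}$, $y\mapsto\partial_{\theta_2}$, $z\mapsto0$. Then $N=T^2$, and $\exp(2\pi m x)$ acts trivially on $W$, yet $\operatorname{Ad}_{\exp(2\pi m x)}(y)=y+2\pi m z$ is unbounded in $m$. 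So for $p\ge1$ the diagonal $\widetilde G$-action on $S^p\mathfrak g^*\otimes\Omega(M,\mathcal F)$ generates no compact group to average over, and your $\mathcal A'$ is undefined.

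The fallback does not close the gap either. The operator $1\otimes\mathcal A$ averages only the form factor. It does not fix diagonally invariant elements: for $\mathfrak{so}(3)$ rotating $S^2$, it kills the invariant element given by $S^2\hookrightarrow\mathbb R^3\cong\mathfrak{so}(3)^*$. Nor does it produce diagonally invariant primitives. So ``the preimage can be made invariant'' is exactly what needs proof.

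What does suffice is that the $\mathfrak g$-action on $\Omega(M,\mathcal F)$ itself factors through $N$.

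1. An element of $(S^p\mathfrak g^*\otimes\Omega)^{\mathfrak g}$ is a $\mathfrak g$-equivariant map $f\colon S^p\mathfrak g\to\Omega(M,\mathcal F)$. Its image is finite-dimensional and $\mathfrak g$-stable, hence $G_1$-stable, and being closed, also $N$-stable.
2. So every such $f$ factors through the largest quotient $Q$ of $S^p\mathfrak g$ on which the $\mathfrak g$-action integrates to a representation of $N$. This gives a cochain isomorphism $(S^p\mathfrak g^*\otimes\Omega)^{\mathfrak g}\cong(Q^*\otimes\Omega)^N$, where genuine Haar averaging over $N$ is available.
3. Use $[h^*\beta]=[\beta]$ for all $h\in N$. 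This follows from the duality argument in the proof of Theorem~\ref{average-operator}, so no finite-dimensionality or continuity argument on $H^\bullet(M,\mathcal F)$ is needed.
4. Since $(Q^*)^N=(S^p\mathfrak g^*)^{\mathfrak g}$, you obtain $E_1^{p,\bullet}\cong(S^p\mathfrak g^*)^{\mathfrak g}\otimes H^\bullet(M,\mathcal F)$.

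If $\mathfrak g$ happens to be of compact type, you could instead average over the compact graph closure of $\widetilde G$ in $N\times O(\mathfrak g)$, as in Section~\ref{App:Lie-foliation}.
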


A second application concerns Lie foliations. We say that a Lie
$\mathfrak g$-foliation is of \emph{compact type} if its structural Lie
algebra $\mathfrak g$ is of compact type. Such a foliation carries a
canonical isometric transverse $\mathfrak g$-action. By adapting the
transverse averaging construction to its natural transverse parallelism, we
obtain the following cohomological description.

\begin{thm}\label{thm:liefoliation}
Let $(M,\mathcal{F})$ be a complete Lie $\mathfrak{g}$-foliation of compact type with compact leaf closure space. Assume that $M$ is connected. Then
\[
H^\bullet(M,\mathcal{F}) \cong H^\bullet(\mathfrak{g}).
\]
\end{thm}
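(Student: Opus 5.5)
The plan has two halves: first show that every basic class has a $\mathfrak g$-invariant representative, then identify the invariant basic forms with $\Lambda^\bullet\mathfrak g^*$. The setup is the defining data of a Lie $\mathfrak g$-foliation, namely a transverse parallelism given by a $\mathfrak g$-valued $1$-form $\omega\in\Omega^1(M)\otimes\mathfrak g$. This form satisfies $d\omega+\tfrac12[\omega,\omega]=0$ and has $\ker\omega=T\mathcal F$. Choose a basis $e_1,\dots,e_n$ of $\mathfrak g$ and write $\omega=\sum\omega^i e_i$. Let $Y_1,\dots,Y_n$ be the transverse vector fields dual to the $\omega^i$.

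The canonical transverse action is the \emph{right} parallelism. Its fundamental transverse fields $\bar Y_i$ commute with the Lie algebra of foliate fields generated by the left-invariant parallelism. Choose an $\mathrm{Ad}$-invariant inner product on $\mathfrak g$, which exists because $\mathfrak g$ is of compact type. It yields a transverse metric $\sum(\omega^i)^2$, and the canonical transverse $\mathfrak g$-action preserves this metric, so the action is isometric.

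\textbf{Step 1: invariant representatives.} Apply Theorem~\ref{average-operator}. The foliation is complete with compact leaf closure space, so every closed basic form is cohomologous to a closed $\mathfrak g$-invariant basic form. The same averaging operator, or a standard homotopy argument on the invariant subcomplex, should then show that the inclusion $\Omega^\bullet(M,\mathcal F)^{\mathfrak g}\hookrightarrow\Omega^\bullet(M,\mathcal F)$ is a quasi-isomorphism. Surjectivity on cohomology is immediate from Theorem~\ref{average-operator}.

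Injectivity is the harder direction. If an invariant form $\alpha$ satisfies $\alpha=d\gamma$ with $\gamma$ basic, then $\alpha=\mathcal A(\alpha)=d\mathcal A(\gamma)$, provided $\mathcal A$ is a chain map that fixes invariant forms. If the paper's $\mathcal A$ is built as an integral over the compact group $N$ of operators commuting with $d$, both properties are automatic. I would verify this from the construction, using that the induced $N$-action on basic forms commutes with $d$ and is trivial on $\mathfrak g$-invariant forms because $N$ is connected.

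\textbf{Step 2: invariant basic forms.} Here I choose which parallelism counts as the transverse action, so that invariance forces constant coefficients. Every basic form can be written uniquely as $\sum f_I\,\omega^I$ with $f_I$ basic functions. Take the action to be by the fields $Y_i$ dual to $\omega$. Using the Maurer–Cartan equation, $L_{Y_i}\omega^j=-\sum_k c^j_{ik}\omega^k$, so $L_{Y_i}$ acts on the span of the $\omega^I$ exactly as the coadjoint action. Invariance of $\sum f_I\omega^I$ therefore reads $Y_i f_I=(\text{coadjoint terms})$, which is a first-order system rather than constancy.

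So I would instead use the action that commutes with $\omega$. Concretely, pass to the developing map $D:\widetilde M\to\widetilde G$. The right-invariant fields on $\widetilde G$ are infinitesimal isometries of the bi-invariant metric, and they preserve the left-invariant Maurer–Cartan forms. Their transverse lifts give the isometric transverse action, and the $\omega^i$ are invariant under it. Invariance of $\sum f_I\omega^I$ then means that each $f_I$ is killed by all the lifted fields. These fields span the normal bundle at every point, so each $f_I$ is locally constant, hence constant because $M$ is connected. Therefore
\[
\Omega^\bullet(M,\mathcal F)^{\mathfrak g}=\Lambda^\bullet\mathfrak g^*\cdot\omega .
\]
The Maurer–Cartan equation shows that $d$ on this subcomplex is the Chevalley–Eilenberg differential. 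Combined with Step 1, this gives $H^\bullet(M,\mathcal F)\cong H^\bullet(\mathfrak g)$.

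\textbf{Main obstacle.} The delicate point is that the right-parallelism action is defined only on $\widetilde M$. On $M$ it makes sense transversely only if it is invariant under the holonomy $\pi_1(M)\to\widetilde G$, which acts by left translations; right-invariant fields are not left-invariant. This is exactly where ``adapting the transverse averaging construction to its natural transverse parallelism'' enters.

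My first attempt would be to average with respect to the left-translation fields dual to $\omega$, which are globally defined on $M$. These fields act by Killing fields on the Molino manifold; since $W$ is a compact quotient of $\overline{\Gamma}\backslash\widetilde G$, the relevant compact group is the closure of the holonomy. This group acts on $\Lambda\mathfrak g^*$ through $\mathrm{Ad}$. Averaging over it should send any basic form to one with constant coefficients in the $\omega^I$. The closure of the holonomy is connected, so its action on cohomology is trivial, and the same $N$-integration and Poincaré-duality argument as in Theorem~\ref{average-operator} then yields the result.
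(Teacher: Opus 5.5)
Your Step~1 is sound. The canonical action $\xi\mapsto\overline X_\xi$ (your fields $Y_i$ dual to $\omega$) is isometric for an $\operatorname{Ad}$-invariant inner product, so Theorem~\ref{average-operator} applies. Since $\mathcal A$ is a chain map fixing invariant forms, $\Omega^\bullet(M,\mathcal F)^{\mathfrak g}\hookrightarrow\Omega^\bullet(M,\mathcal F)$ is a quasi-isomorphism. The paper instead averages directly over the closure $\widehat N$ of $a\mapsto(R_{a^{-1}},\operatorname{Ad}_a)$ in $\operatorname{Iso}(B)\times O(\mathfrak g)$, but that is not where the problem lies.

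\textbf{The gap is Step~2.} You prove $\Omega^\bullet(M,\mathcal F)^{\mathfrak g}=\wedge^\bullet\mathfrak g^*\cdot\omega$ only for the right-parallelism action. As you note yourself, that action does not exist on $M$ unless $\operatorname{Ad}(\Gamma)$ is trivial, so it cannot be combined with Step~1. For the action that does exist, your own formula $\mathcal L_{Y_i}\omega^j=-\sum_k c^j_{ik}\omega^k$ shows that constant-coefficient forms are generally \emph{not} invariant. Hence no averaging onto canonically invariant forms can produce constant coefficients.

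Your proposed repair fails as well, for two reasons. The closure of the holonomy, acting by left translations on the Darboux cover, already fixes every basic form, so averaging over it does nothing. Letting $\operatorname{Ad}$ act on the coefficients alone does not commute with $d$, because of the derivative terms $(\xi_i)_BF$ in Lemma~\ref{coefficient-complex}(b). This is exactly why the paper couples $\operatorname{Ad}_a$ with the isometry $R_{a^{-1}}$ of $B$.

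Here is a concrete counterexample. Take the suspension of left translation on $SU(2)$ by an element generating a dense subgroup of a maximal torus $T$. The invariant basic $1$-forms correspond to the one-dimensional space $(\mathfrak{su}(2)^*)^{\operatorname{Ad}(T)}$, not to the three-dimensional space $\mathfrak{su}(2)^*\cdot\omega$.

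\textbf{What is missing} is the correct description of the invariant complex, together with one further quasi-isomorphism.
\begin{enumerate}
\item Because $\widehat N$ acts transitively on $B$, evaluation at a point $b_0$ identifies $\Omega^\bullet(M,\mathcal F)^{\mathfrak g}$, up to the sign $(-1)^k$, with $\bigl((\wedge^\bullet\mathfrak g^*)^{\widehat K},d_{CE}\bigr)$ (Lemma~\ref{invariant-coefficient-complex}). Here $\widehat K$ is the stabilizer of $b_0$, acting through its $O(\mathfrak g)$-component; for a suitable $b_0$ this contains $\operatorname{Ad}(\overline\Gamma)$. This is in general a proper subcomplex of $\wedge^\bullet\mathfrak g^*$.
\item You then still need Lemma~\ref{stabilizer-quasi-isomorphism}. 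Every element of $\overline{\operatorname{Ad}(\widetilde G)}$ acts trivially on $H^\bullet(\mathfrak g)$, by Cartan's homotopy formula for inner automorphisms plus closedness of the kernel of a continuous representation. So averaging over the compact group $\widehat K$ is a cochain projection inducing the identity on cohomology. Hence the inclusion $(\wedge^\bullet\mathfrak g^*)^{\widehat K}\hookrightarrow\wedge^\bullet\mathfrak g^*$ is a quasi-isomorphism.
\end{enumerate}

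Finally, your claim that the closure of the holonomy is connected is false in general: take $\Gamma=\mathbb Z^n\subset\mathbb R^n$ for the point foliation of $T^n$. Connectedness is not what the argument uses. What it uses is that the relevant automorphisms lie in the closure of $\operatorname{Ad}$ of the connected group $\widetilde G$.
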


The principal application, however, returns to the question posed at the
beginning of the introduction---a question formulated entirely outside
foliation theory. By applying the transverse averaging operator to the
foliation of $G$ by right $H$-orbits, we obtain the following extensions of
the classical Chevalley--Eilenberg theorem. Although foliation geometry
provides the method of proof, the resulting statements concern only $G$, $H$,
$\mathfrak g$, and $\mathfrak h$.

\begin{thm}\label{thm:GH1}
Let $G$ be a connected Lie group, not necessarily compact, and let $H$ be a connected Lie subgroup of $G$, not necessarily closed. Let $\mathfrak g$ and $\mathfrak h$ be the Lie algebras of $G$ and $H$, respectively. Suppose that $G/\overline{H}$ is compact and that $\mathfrak g$ is of compact type. Then the diffeological de Rham cohomology $H^\bullet_{\mathrm{dR}}(G/H)$ is isomorphic to the relative Lie algebra cohomology $H^\bullet(\mathfrak g,\mathfrak h)$.
\end{thm}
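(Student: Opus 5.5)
We prove Theorem~\ref{thm:GH1} by passing to basic cohomology and then using the transverse averaging operator of Theorem~\ref{average-operator} to reduce to invariant forms.

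\textbf{Step 1: reduction to basic cohomology.} Let $\mathcal F$ be the foliation of $G$ by right $H$-cosets $gH$; its tangent distribution is spanned by the left-invariant vector fields coming from $\mathfrak h$. By \cite[Theorem~3.5]{HMS}, $H^\bullet_{\mathrm{dR}}(G/H)\cong H^\bullet(G,\mathcal F)$. The leaf closures are the $\overline H$-cosets, so the leaf closure space is $G/\overline H$, which is compact by hypothesis.

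\textbf{Step 2: Riemannian structure and the transverse action.} Since $\mathfrak g$ is of compact type, it admits an $\operatorname{Ad}$-invariant inner product. Its restriction to $\mathfrak h^\perp\cong\mathfrak g/\mathfrak h$ is $\operatorname{ad}(\mathfrak h)$-invariant, so the corresponding left-invariant metric on $G$ is bundle-like and makes $\mathcal F$ a complete Riemannian foliation. Completeness holds because the metric is left-invariant, hence complete, and the leaves are orbits.

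The right-invariant vector fields generate the left $G$-action, which maps cosets $gH$ to cosets. They are therefore foliate and define a transverse $\mathfrak g$-action. This action is isometric: left translations preserve the left-invariant metric, and in particular preserve the transverse metric.

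\textbf{Step 3: averaging.} By Theorem~\ref{average-operator}, every basic class has a $\mathfrak g$-invariant closed basic representative. The delicate point is injectivity: we must show that an invariant basic form which is $d$ of some basic form is already $d$ of an invariant basic form. I would try to obtain this from the construction of $\mathcal A$. Specifically, I would check that $\mathcal A$ is a chain map, $\mathcal A d=d\mathcal A$, and that $\mathcal A$ restricts to the identity on invariant forms. If these hold, then for an invariant $\beta=d\gamma$ we get $\beta=\mathcal A\beta=d\mathcal A\gamma$. Consequently
$$H^\bullet(G,\mathcal F)\cong H^\bullet\big(\Omega(G,\mathcal F)^{\mathfrak g}\big).$$

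\textbf{Step 4: identification of invariant basic forms.} A form invariant under all right-invariant fields is left-invariant, since $G$ is connected. So we may identify it with an element of $\Lambda\mathfrak g^*$. Such a form is basic exactly when it satisfies two conditions:
\begin{itemize}
\item $\iota_X\omega=0$ for $X\in\mathfrak h$, and
\item $L_X\omega=0$ for the left-invariant fields $X\in\mathfrak h$, which in $\Lambda\mathfrak g^*$ is the coadjoint action of $\mathfrak h$.
\end{itemize}
This is precisely the relative Chevalley--Eilenberg complex $C^\bullet(\mathfrak g,\mathfrak h)$. Moreover, the de Rham differential agrees with the Chevalley--Eilenberg differential on left-invariant forms. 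This yields $H^\bullet(\mathfrak g,\mathfrak h)$.

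\textbf{Main obstacle.} I expect the difficulty to lie in Step 3, namely verifying that the averaged invariant complex computes basic cohomology. The averaging is over the compact group $N$ acting through the Molino manifold, not over $G$ itself. So one must check that $\mathcal A$ commutes with $d$ and fixes $\mathfrak g$-invariant basic forms. For the latter, $\mathfrak g$-invariance should imply invariance under the image of $\widetilde G$, and then under its closure $N$ by continuity. If $\mathcal A d=d\mathcal A$ fails at the chain level, I would fall back on the homotopy argument used to prove Theorem~\ref{average-operator}, applied to relatively exact forms.
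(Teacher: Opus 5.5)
Your proposal is correct and follows the paper's proof essentially step for step. The ingredients are the same: a bi-invariant (hence complete, bundle-like) metric from an $\operatorname{Ad}$-invariant inner product, an isometric transverse $\mathfrak g$-action from left translations, leaf closure space $G/\overline H$, Theorem~\ref{average-operator} to make $\Omega^\bullet(G,\mathcal F)^{L(G)}\hookrightarrow\Omega^\bullet(G,\mathcal F)$ a quasi-isomorphism, Lemma~\ref{basic-relative} to identify the invariant complex with $C^\bullet_{CE}(\mathfrak g,\mathfrak h)$, and the Hector--Mac\'{\i}as-Virg\'os--Sanmart\'{\i}n-Carb\'on theorem. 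The injectivity point you flag is settled exactly as you propose: $d\mathcal A=\mathcal A d$ is Lemma~\ref{lem:avaraging-commutes-d}, and a $\mathfrak g$-invariant basic form is fixed by $G_1$ and hence by $N=\overline{G_1}$, so $\beta=\mathcal A\beta=d\mathcal A\gamma$. The paper writes this out in the proof of Theorem~\ref{thm:liefoliation} and uses it implicitly here.
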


When $\mathfrak h$ is an ideal of $\mathfrak g$, the compact-type assumption
on $\mathfrak g$ can be removed.

\begin{thm}\label{thm:GH}
Let $G$ be a connected Lie group, not necessarily compact, and let $H$ be a connected Lie subgroup of $G$, not necessarily closed. Let $\mathfrak g$ and $\mathfrak h$ be the Lie algebras of $G$ and $H$, respectively. Suppose that $\mathfrak h$ is an ideal of $\mathfrak g$ and that $G/\overline{H}$ is compact. Then the diffeological de Rham cohomology $H^\bullet_{\mathrm{dR}}(G/H)$ is isomorphic to the Lie algebra cohomology $H^\bullet(\mathfrak g/\mathfrak h)$.
\end{thm}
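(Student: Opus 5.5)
The plan is to reduce Theorem~\ref{thm:GH} to Theorem~\ref{thm:liefoliation}. By \cite[Theorem~3.5]{HMS}, $H^\bullet_{\mathrm{dR}}(G/H)$ is the basic cohomology $H^\bullet(G,\mathcal F)$, where $\mathcal F$ is the foliation of $G$ by right $H$-orbits, i.e.\ the left cosets $gH$. So it suffices to show three things: $(G,\mathcal F)$ is a complete Lie $\mathfrak g/\mathfrak h$-foliation, its leaf closure space is compact, and $\mathfrak g/\mathfrak h$ is of compact type. Theorem~\ref{thm:liefoliation} then gives $H^\bullet(G,\mathcal F)\cong H^\bullet(\mathfrak g/\mathfrak h)$.

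\textbf{Lie foliation structure.} Because $\mathfrak h$ is an ideal and $H$ is connected, $H$ is normal in $G$. Then $\overline H$ is a closed normal subgroup, and $\mathcal F$ is tangent to the left-invariant distribution $\mathfrak h$. Consider the left Maurer--Cartan form $\theta\in\Omega^1(G,\mathfrak g)$ and compose it with the projection to get $\omega=p\circ\theta\in\Omega^1(G,\mathfrak g/\mathfrak h)$. This form has kernel exactly $T\mathcal F$. Since $p$ is a Lie algebra homomorphism, $\omega$ satisfies the Maurer--Cartan equation $d\omega+\tfrac12[\omega,\omega]=0$. Together these make $\mathcal F$ a Lie $\mathfrak g/\mathfrak h$-foliation. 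A transverse parallelism is given by the left-invariant fields on $G$ spanning a complement of $\mathfrak h$; these are complete, and the left-invariant metric on $G$ is bundle-like for a suitable choice. Completeness of the foliation follows.

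\textbf{Compactness.} The leaf closures are the cosets $g\overline H$, so the leaf closure space is $G/\overline H$, which is compact by hypothesis.

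\textbf{Compact type (main obstacle).} I must show that $\mathfrak g/\mathfrak h$ is of compact type. Since $\overline H$ is normal, $K=G/\overline H$ is a compact connected Lie group, and $\mathfrak k=\mathfrak g/\bar{\mathfrak h}$, where $\bar{\mathfrak h}$ is the Lie algebra of $\overline H$. The key input is the structure of the closure of a normal (dense-winding) subgroup: $\bar{\mathfrak h}/\mathfrak h$ is abelian and central in $\mathfrak g/\mathfrak h$. Abelian holds because, by results of Malcev/Goto on closures of analytic subgroups, $[\bar{\mathfrak h},\bar{\mathfrak h}]\subset\mathfrak h$. Central holds because $[\mathfrak g,\bar{\mathfrak h}]\subset\mathfrak h$: indeed $\mathrm{Ad}(G)$ preserves $\mathfrak h$, hence so does $\mathrm{Ad}(\overline H)$, and $\overline H$ acts trivially on $\bar{\mathfrak h}/\mathfrak h$ because $H$ does by density.

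Thus $\mathfrak g/\mathfrak h$ is a central extension of the compact algebra $\mathfrak k$ by the abelian algebra $\bar{\mathfrak h}/\mathfrak h$. I then write $\mathfrak k=\mathfrak z\oplus[\mathfrak k,\mathfrak k]$ with $[\mathfrak k,\mathfrak k]$ semisimple. The extension splits over the semisimple part by Whitehead's lemma. Over the abelian part, a central extension $\mathfrak a$ of an abelian algebra by a central one is 2-step nilpotent. I need its bracket to vanish, so I would argue that $[\mathfrak g,\mathfrak g]\cap\bar{\mathfrak h}$ projects into $\mathfrak h$, using that $K$ compact forces $[\mathfrak g,\mathfrak g]+\mathfrak h$ to be the preimage of the semisimple part. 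If this direct argument proves awkward, the fallback is to construct an $\mathrm{ad}$-invariant inner product on $\mathfrak g/\mathfrak h$ directly: average over the compact torus $\overline{\mathrm{Ad}(G)}$ acting on $\mathfrak g/\mathfrak h$, using that the image of $G$ in $\mathrm{GL}(\mathfrak g/\mathfrak h)$ factors through $K$ modulo a group acting trivially. Having compact type, Theorem~\ref{thm:liefoliation} applies, since $G$ is connected, and the isomorphism follows.
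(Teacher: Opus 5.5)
\textbf{Where the proposal stands.} Your overall architecture is the paper's. You identify $H^\bullet_{\mathrm{dR}}(G/H)$ with $H^\bullet(G,\mathcal F)$, use $p\circ\theta$ to make $(G,\mathcal F)$ a complete Lie $(\mathfrak g/\mathfrak h)$-foliation with leaf closure space $G/\overline H$, show that $\mathfrak g/\mathfrak h$ is of compact type, and apply Theorem~\ref{thm:liefoliation}. Those steps are fine. The left Maurer--Cartan form works, and every left-invariant metric is bundle-like here because $p\circ\theta$ is basic. The problem is your primary argument for compact type.

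\textbf{The gap.} The step that fails is the vanishing of the bracket on the preimage $\mathfrak a$ of $\mathfrak z$. This cannot follow from the Lie-algebraic facts you have established. The triple $0\subset\mathfrak c\subset\mathfrak{heis}_3$, with $\mathfrak c$ the center, has an ideal, a central quotient $\mathfrak c/0$, and a compact-type quotient $\mathfrak{heis}_3/\mathfrak c\cong\mathbb R^2$. Yet $\mathfrak{heis}_3$ is not of compact type. The justification you offer also does not work as written. The preimage of $[\mathfrak k,\mathfrak k]$ in $\mathfrak g$ is $[\mathfrak g,\mathfrak g]+\bar{\mathfrak h}$, which can strictly contain $[\mathfrak g,\mathfrak g]+\mathfrak h$. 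For $G=T^2$ and $H$ an irrational line, the first is $\mathfrak g$ and the second is $\mathfrak h$. Compactness of $K$ by itself only says something about $\mathfrak k$, and the Heisenberg example shows that is not enough.

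\textbf{The fix.} The missing input is group-level boundedness, which is exactly your fallback and the paper's Lemma~\ref{relationship}. The induced representation $\overline{\mathrm{Ad}}\colon G\to\mathrm{GL}(\mathfrak g/\mathfrak h)$ has differential $\xi\mapsto(\operatorname{ad}\xi\bmod\mathfrak h)$. This vanishes on $\mathfrak h$ because $\mathfrak h$ is an ideal, so $\overline{\mathrm{Ad}}$ is trivial on $H$. By continuity it is trivial on $\overline H$, so it factors through the compact group $G/\overline H$. Now average an inner product over the compact image and differentiate to get an $\operatorname{ad}$-invariant inner product.

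Two corrections to how you state the fallback:
\begin{enumerate}
\item The image is a compact group, not a torus in general. For $G=SU(2)$ and $H=\{e\}$ it is $SO(3)$.
\item Your centrality sentence argues that $\overline H$ acts trivially on $\bar{\mathfrak h}/\mathfrak h$. That only yields $[\bar{\mathfrak h},\bar{\mathfrak h}]\subset\mathfrak h$. Centrality needs triviality on all of $\mathfrak g/\mathfrak h$, which is the same density fact that powers the fallback.
\end{enumerate}
Make the fallback your main argument. Once it is in place, the central-extension and Whitehead detour is unnecessary.
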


In the special case where $H$ is connected and dense in $G$,
Theorem~\ref{thm:GH} recovers a recent result of Clark and
Ziegler~\cite{CZ26}, who proved the same conclusion for an arbitrary dense Lie
subgroup $H$ of $G$. Indeed, if $H$ is dense in $G$, then
$\mathfrak g/\mathfrak h$ is automatically abelian, hence of compact type,
and $G/\overline{H}$ is a point.  Clark and Ziegler also identified a common 
generalization of their
dense-subgroup theorem and the classical Chevalley--Eilenberg theorem to
arbitrary subgroups as a natural next problem. They explained, however, that
the two known arguments reduce the computation to $G$-invariant forms by
fundamentally different mechanisms: in the dense-subgroup case, invariance
holds already at the cochain level by density, whereas in the classical
homogeneous-space case it is obtained only in cohomology by averaging, a
method that essentially requires $G$ to be compact.

Theorems~\ref{thm:GH1} and~\ref{thm:GH} bridge these two settings under the
additional assumptions that $H$ is connected and that $G/\overline H$ is
compact. The transverse averaging operator provides the missing mechanism: it
produces invariant representatives without requiring either the density of
$H$ or the compactness of $G$. Moreover, the compactness assumption on
$G/\overline H$ singles out a genuinely rigid range of validity: outside this
regime, we give simple counterexamples showing that the conclusion fails. On
the other hand, our results do not fully subsume the Clark--Ziegler theorem,
since their theorem does not require $H$ to be connected.

The remaining distinction between our result and the full Clark--Ziegler
theorem is the possible disconnectedness of $H$. Treating
that case requires one to keep track of the residual action of the component
group and consequently involves more substantial diffeological input. We do
not pursue that refinement here. In the present paper, diffeology enters only
at a black-box level, through the theorem of Hector,
Mac\'{\i}as-Virg\'os, and Sanmart\'{\i}n-Carb\'on identifying the
diffeological de Rham cohomology of a leaf space with the basic cohomology of
the corresponding foliation. This is the only diffeological input needed for
Theorems~\ref{thm:GH1} and~\ref{thm:GH}. A foliation-theoretic generalization
encompassing the full Clark--Ziegler theorem is developed separately in
\cite{L26}, where diffeological methods are used more substantially.

The paper is organized as follows. Section~\ref{sec:background} recalls the
necessary background on transverse geometric structures and equivariant basic
cohomology. Section~\ref{integration} reviews Sergiescu's approach to
transverse integration via Molino theory, together with the basic
Poincar\'e duality theorem. Section~\ref{sec:averaging-operator} is devoted to
the construction of the transverse averaging operator, the proof of
Theorem~\ref{average-operator}, and the computation of the $E_1$-term in
Theorem~\ref{thm:E1}. Section~\ref{App:Lie-foliation} applies the transverse
averaging operator to compute the basic cohomology of Lie foliations of
compact type. Finally, Section~\ref{De Rham of G/H} applies the transverse
averaging method to prove Theorems~\ref{thm:GH1} and~\ref{thm:GH}, computing
the diffeological de Rham cohomology of the quotient $G/H$, where $H$ is a
Lie subgroup of $G$, not necessarily closed.

\section{Transverse geometric structures on foliations}\label{sec:background}

This section reviews the transverse geometric and algebraic background used
throughout the paper. After fixing notation for transverse vector fields and
basic forms, we recall transverse Riemannian metrics, transverse Killing vector
fields, and the Cartan model for equivariant basic cohomology. We also review
relative Chevalley--Eilenberg cohomology, which will be used in the proofs of
Theorems~\ref{thm:GH1} and~\ref{thm:GH}.

 Let $\mathcal{F}$ be a foliation on a smooth manifold $M$, and let $T\mathcal{F}$ be the tangent bundle of the foliation.
Throughout this paper we denote by
$\mathfrak{X}(\mathcal{F})\subset\mathfrak{X}(M)$ the subspace of vector fields tangent to the leaves of $\mathcal{F}$.
We say that a vector field $X\in\mathfrak{X}(M)$ is \emph{foliate}, if $[X,Y]\in\mathfrak{X}(\mathcal{F})$ for all $Y\in \mathfrak{X}(\mathcal{F})$.
We will denote by
$\mathfrak{R}(\mathcal{F})$ the space of foliate vector fields on $(M,\mathcal{F})$.
Clearly we have that
$\mathfrak{X}(\mathcal{F}) \subset \mathfrak{R}(\mathcal{F})$.
In this context,  a \emph{transverse vector field} is an equivalence class in the quotient space $\mathfrak{R}(\mathcal{F})/\mathfrak{X}(\mathcal{F})$. The space of transverse vector fields, denoted by $\mathfrak{X}(M, \mathcal{F})$, forms a Lie algebra with a Lie bracket inherited from the natural one on $\mathfrak{R}(\mathcal{F})$.
The space of \emph{basic forms} on $M$ is
\[
\Omega^\bullet(M,\mathcal F)
:=
\{\alpha\in \Omega^\bullet(M)\mid
\iota_X\alpha=\mathcal L_X\alpha=0,\ \forall X\in \mathfrak X(\mathcal F)\}.
\]
Since $d$ preserves basic forms, $(\Omega^\bullet(M,\mathcal F),d)$ is a subcomplex
of the de Rham complex. Its cohomology $H^\bullet(M,\mathcal F)$ is called the
\emph{basic cohomology}.

%Let $N=TM/T\mathcal{F}$ be the normal bundle of the foliation.
%A moment's consideration shows that for any foliate vector field $X$, and for any $(r, s)$-type tensor \[\sigma\in C^{\infty}(\underbrace{Q^*\otimes\cdots\otimes Q^*}_{r}\otimes\underbrace{Q\otimes\cdots \otimes Q}_s),\] the Lie derivative
%$\mathcal{L}(X)\sigma$ is well defined.

\begin{defn}
A \emph{transverse Riemannian metric} on a foliation $(M,\mathcal{F})$ is a Riemannian metric $g$ on the normal bundle $TM/T\mathcal{F}$ of the foliation, such that $\mathcal{L}_Xg=0$,
for $X\in\mathfrak{X}(\mathcal{F})$.
We say that $(\mathcal{F}, g)$ is a Riemannian foliation if there exists a transverse Riemannian metric $g$ on $(M,\mathcal{F})$.
\end{defn}

\begin{defn}
A Riemannian metric $g_{TM}$ on a foliated manifold $(M,\mathcal F)$ is
\emph{bundle-like} if for every open set $U\subset M$ and every pair of
foliate vector fields $v,w$ on $U$ that are orthogonal to the leaves,
the function $g_{TM}(v,w)$ is basic on $U$.
\end{defn}

A bundle-like Riemannian metric $g_{TM}$ on $(M,\mathcal F)$ induces a transverse
Riemannian metric $g$ by identifying $N\mathcal F$ with the
$g_{TM}$-orthogonal complement of $T\mathcal F$ and restricting $g_{TM}$ to
$N\mathcal F$. Conversely, every transverse Riemannian metric arises in this way
from a bundle-like Riemannian metric. We say that a Riemannian foliation
$(M,\mathcal F,g)$ is \emph{complete} if there exists a complete bundle-like
Riemannian metric on $M$ inducing $g$.  

%Let $q$ be the codimension of a Riemannian foliation $\mathcal{F}$ on a compact manifold $M$.
%If $H^{q}(M,\mathcal{F})=\mathbb{R}$, we say that $\mathcal{F}$ is \emph{homologically orientable}.

Let $(M,\mathcal{F})$ be a Riemannian foliation with transverse metric $g$, and let $\overline{X}\in \mathfrak{X}(M,\mathcal{F})$ be a transverse vector field represented by a foliate vector field $X$. Define
$\mathcal{L}_{\overline{X}}g:=\mathcal{L}_Xg$. This is well defined, i.e.\ independent of the choice of representative $X$. We say that $\overline{X}$ is \emph{transversely Killing} if $\mathcal{L}_{\overline{X}}g=0$. If $\overline{X}$ and $\overline{Y}$ are transversely Killing, then the Cartan identities imply that $[\overline{X},\overline{Y}]$ is again transversely Killing. Thus the space of transversely Killing vector fields, denoted by $\mathrm{iso}(M,\mathcal{F})$, is a Lie subalgebra of $\mathfrak{X}(M,\mathcal{F})$, called the \emph{transverse Killing Lie algebra}.

\subsection{Transverse Lie algebra action and the equivariant basic cohomology}

\begin{defn}\label{transverse-action}
A \emph{transverse action} of a Lie algebra $\mathfrak{g}$ on a foliated manifold $(M,\mathcal{F})$ is a Lie algebra homomorphism \begin{equation}\label{equ1.1}
\mathfrak{g}\rightarrow\mathfrak{X}(M, \mathcal{F}).  \end{equation}
When the foliation $\mathcal{F}$ under consideration is Riemannian with a given transverse Riemannian metric, a transverse action of $\mathfrak{g}$ on $(M,\mathcal{F})$ is said to be isometric, if the image of the map (\ref{equ1.1}) lies inside the transverse Killing Lie algebra $\mathrm{iso}(M, \mathcal{F})$.
\end{defn}

Suppose that a Lie algebra $\mathfrak g$ acts transversely on a foliated manifold
$(M,\mathcal F)$. For each $\xi\in\mathfrak g$, let $\overline{\xi}_M$ denote
the corresponding transverse vector field, and choose a foliate vector field
$\xi_M$ representing it. For $\alpha\in\Omega(M,\mathcal F)$, define
\[
\iota(\xi)\alpha:=\iota_{\xi_M}\alpha,\qquad
\mathcal L(\xi)\alpha:=\mathcal L_{\xi_M}\alpha.
\]
Since $\alpha$ is basic, these operators are independent of the choice of
representative $\xi_M$. By \cite[Proposition~3.12]{GT10}, a transverse
$\mathfrak g$-action endows the basic de Rham complex $\Omega(M,\mathcal F)$
with the structure of a $\mathfrak g^\star$-algebra in the sense of
\cite[Chapter~2]{GS99}. Its Cartan model is therefore
\[
\Omega^\bullet_{\mathfrak g}(M,\mathcal F):=
\bigl[S\mathfrak g^*\otimes\Omega(M,\mathcal F)\bigr]^{\mathfrak g},
\]
whose elements may be identified with equivariant polynomial maps
$\mathfrak g\to \Omega(M,\mathcal F)$; these are called
\emph{equivariant basic differential forms}.
The Cartan complex is bigraded by
\[
\Omega_{\mathfrak g}^{i,j}(M,\mathcal F):=
\bigl[S^i\mathfrak g^*\otimes\Omega^{j-i}(M,\mathcal F)\bigr]^{\mathfrak g},
\]
and carries two differentials: the vertical differential
$ d:=1\otimes d$, 
and the horizontal differential $d'$ defined by
\[
(d'\alpha)(\xi):=-\,\iota(\xi)\alpha(\xi),
\qquad \xi\in\mathfrak g.
\]
Thus, as a single complex,
\[
\Omega_{\mathfrak g}^k(M,\mathcal F)
=
\bigoplus_{i+j=k}\Omega_{\mathfrak g}^{i,j}(M,\mathcal F),
\]
with total differential $d_{\mathfrak g}=d+d'$.

The equivariant basic de Rham cohomology
$H^\bullet_{\mathfrak g}(M,\mathcal F)$ is defined to be the cohomology of the
Cartan complex
\[
\bigl(\Omega_{\mathfrak g}(M,\mathcal F),d_{\mathfrak g}\bigr).
\]
The bigrading above yields the spectral sequence whose $E_1$-term is computed in
Theorem~\ref{thm:E1}.\subsection{Relative Chevalley--Eilenberg cohomology}

Let $\mathfrak g$ be a Lie algebra and let $\mathfrak h\subset \mathfrak g$ be a
Lie subalgebra. Recall that the \emph{Chevalley--Eilenberg complex} of $\mathfrak g$ is
\[
(C_{CE}^\bullet(\mathfrak g),d_{CE}) := (\Lambda^\bullet \mathfrak g^*, d_{CE}),
\] where the differential
$d_{CE}: \Lambda^k\mathfrak{g}^* \to \Lambda^{k+1}\mathfrak{g}^*$
is defined by
\[
(d_{CE}\eta)(\xi_0, \xi_1, \ldots, \xi_k)
= \sum_{0 \leq i < j \leq k} (-1)^{i+j}
\eta([\xi_i, \xi_j], \xi_0, \ldots, \hat{\xi}_i,
\ldots, \hat{\xi}_j, \ldots, \xi_k)
\]
for $\eta \in \Lambda^k\mathfrak{g}^*$ and
$\xi_0, \ldots, \xi_k \in \mathfrak{g}$, where $\hat{\xi}_i$
denotes omission of $\xi_i$. One verifies that $d_{CE}^2 = 0$
as a consequence of the Jacobi identity. The cohomology of this
complex is called the \emph{Lie algebra cohomology} of
$\mathfrak{g}$, denoted $H^\bullet(\mathfrak{g})$.

For $X\in \mathfrak g$, let $\iota_X$ denote contraction by $X$, and let
$\operatorname{ad}^*(X)$ denote the coadjoint action of $X$ on
$\mathfrak g^*$, extended to $\wedge^\bullet \mathfrak g^*$ as a degree-zero
derivation. Thus, for $\alpha\in \wedge^k\mathfrak g^*$,
\[
(\operatorname{ad}^*(X)\alpha)(Y_1,\dots,Y_k)
=
-\sum_{i=1}^k
\alpha(Y_1,\dots,[X,Y_i],\dots,Y_k),
\]
and
\[
\operatorname{ad}^*(X)=d_{CE}\iota_X+\iota_X d_{CE}.
\]

\begin{defn}
The relative Chevalley--Eilenberg complex of the pair $(\mathfrak g,\mathfrak h)$
is the subcomplex
\[
C_{CE}^\bullet(\mathfrak g,\mathfrak h)
:=
\{\alpha\in \wedge^\bullet \mathfrak g^* \mid
\iota_X\alpha=0,\ \operatorname{ad}^*(X)\alpha=0,\ \forall X\in \mathfrak h\}.
\]
$C_{CE}^\bullet(\mathfrak g,\mathfrak h)$ is preserved by $d_{CE}$.
The cohomology of the cochain complex $(C_{CE}^\bullet(\mathfrak g,\mathfrak h),d_{CE})$
is called the relative Lie algebra cohomology of $(\mathfrak g,\mathfrak h)$ and is
denoted by $H^\bullet(\mathfrak g,\mathfrak h)$.
\end{defn}

Clearly, if $\mathfrak h=0$, then
$C_{CE}^\bullet(\mathfrak g,\mathfrak h)=C_{CE}^\bullet(\mathfrak g)$ and
$H^\bullet(\mathfrak g,\mathfrak h)=H^\bullet(\mathfrak g)$.
We also record the following standard fact, which will be used in the paper; see, for example,
\cite[Chapter III]{K88}.

\begin{lem}\label{CE-complex}
If $\mathfrak h$ is an ideal of $\mathfrak g$, then the quotient map
$q:\mathfrak g\to \mathfrak g/\mathfrak h$ induces a natural isomorphism of
cochain complexes
\[
q^*:\wedge^\bullet(\mathfrak g/\mathfrak h)^*
\xrightarrow{\;\cong\;}
C_{CE}^\bullet(\mathfrak g,\mathfrak h).
\]
Consequently,
\[
H^\bullet(\mathfrak g,\mathfrak h)\cong H^\bullet(\mathfrak g/\mathfrak h).
\]
\end{lem}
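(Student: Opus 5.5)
The plan is to show directly that $q^*$ is injective, lands in $C_{CE}^\bullet(\mathfrak g,\mathfrak h)$, is surjective onto it, and intertwines the two differentials. For $\beta\in\wedge^k(\mathfrak g/\mathfrak h)^*$ we set $(q^*\beta)(\xi_1,\dots,\xi_k)=\beta(q\xi_1,\dots,q\xi_k)$. Injectivity is immediate because $q$ is surjective.

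\emph{The image lies in the relative complex.} For $X\in\mathfrak h$ we have $q(X)=0$, so $\iota_X q^*\beta=0$. For the coadjoint condition,
\[
(\operatorname{ad}^*(X)q^*\beta)(Y_1,\dots,Y_k)=-\sum_i\beta(qY_1,\dots,q[X,Y_i],\dots,qY_k).
\]
Since $\mathfrak h$ is an ideal, $[X,Y_i]\in\mathfrak h$, hence $q[X,Y_i]=0$ and the whole sum vanishes. This is the only step where the ideal hypothesis is genuinely used.

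\emph{Surjectivity.} Take $\alpha\in C^k_{CE}(\mathfrak g,\mathfrak h)$. The condition $\iota_X\alpha=0$ for all $X\in\mathfrak h$, together with multilinearity and alternation, shows that $\alpha(\xi_1,\dots,\xi_k)$ is unchanged when any single $\xi_i$ is replaced by $\xi_i+X$ with $X\in\mathfrak h$. Hence $\alpha$ descends to a well-defined alternating form $\beta$ on $\mathfrak g/\mathfrak h$ with $q^*\beta=\alpha$. In fact the horizontality condition alone identifies $\wedge^\bullet(\mathfrak g/\mathfrak h)^*$ with the $\mathfrak h$-horizontal forms, and the invariance condition is then automatic by the previous step.

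\emph{Chain map.} Because $q$ is a Lie algebra homomorphism, $q[\xi_i,\xi_j]=[q\xi_i,q\xi_j]$. Substituting this into the Chevalley--Eilenberg formula gives $d_{CE}(q^*\beta)=q^*(d_{CE}\beta)$ term by term. Consequently $q^*$ is an isomorphism of cochain complexes, and passing to cohomology yields $H^\bullet(\mathfrak g,\mathfrak h)\cong H^\bullet(\mathfrak g/\mathfrak h)$.

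There is no real obstacle here. The only point needing care is the well-definedness of the descended form in the surjectivity step, which follows from horizontality alone.
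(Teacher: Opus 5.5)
Your proof is correct and complete, and it is the standard verification the paper has in mind: the paper gives no argument of its own and simply cites \cite[Chapter III]{K88}, and your three steps supply that argument in full (horizontal cochains descend to $\mathfrak g/\mathfrak h$, $\operatorname{ad}^*(\mathfrak h)$-invariance is then automatic because $[\mathfrak h,\mathfrak g]\subset\mathfrak h$, and $q^*$ commutes with $d_{CE}$ because $q$ is a Lie algebra homomorphism). One correction to your commentary: the coadjoint step is not the only use of the ideal hypothesis, since that hypothesis is also what makes $\mathfrak g/\mathfrak h$ a Lie algebra and $q$ a homomorphism, which your chain-map step relies on.
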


\section{Sergiescu's transverse integration and basic Poincar\'e duality} \label{integration}

We begin with a brief review of the foundational aspects of Molino's structural
theory of Riemannian foliations \cite{Mo88} that will be used in this paper.
Throughout we assume that $(M,\mathcal F)$ is a complete Riemannian foliation of
codimension $q$ equipped with a transverse Riemannian metric $g$. Let
$\pi_M:P\to M$ denote the corresponding transverse orthonormal frame bundle on
$M$. When $\mathcal F$ is transversely oriented, one may replace $P$ with the
\emph{oriented transverse orthonormal frame bundle} $P^+$, consisting of
positively oriented frames, which ensures that $P^+$ and the Molino manifold
$W$ are connected when $M$ is connected. Whenever $P^+$ is defined, all
constructions and arguments in this paper stated for $P$ apply equally to
$P^+$, with only obvious notational changes. We adopt this convention in the
proof of Theorem~\ref{thm:liefoliation} in Section~\ref{App:Lie-foliation},
where we use the connectedness of the Molino manifold $W$; the constructions in
the present section and in Section~\ref{sec:averaging-operator} do not depend on
this choice.

%Choose a basis $\zeta_1, \cdots, \zeta_{r}$ of $o(q)$. 

%Let $X_{q+i}$ be the fundamental vector field on $P$ generated by $\zeta_i$, $1\leq i\leq r$. Then $ X_{q+1}, \cdots, X_{q+r}$ must be foliate and linearly independent.
 The foliation $\mathcal{F}$ naturally lifts to a foliation $\mathcal{F}_P$ on $P$ of codimension $q+r$, which is invariant under the action of the structure group $O(q)$ on $P$. Here $r:=\frac{q(q-1)}{2}$ is the dimension of the structure group $O(q)$. A key feature of the lifted foliation $ \mathcal{F}_P$ is that it is transversely parallelizable. Consequently, there is a fibration $\pi_W: P\rightarrow W$ whose fibers are precisely the leaf closures of $\mathcal{F}_P$. The base manifold $W$ is called the \emph{Molino manifold} of the Riemannian foliation $(M, \mathcal{F})$. 
Together, these maps organize into the Molino diagram (\ref{Molino-diagram}).
\begin{equation} \label{Molino-diagram}
\begin{tikzcd}[column sep=large, row sep=large]
& P \arrow[dl, "\pi_M"'] \arrow[dr, "\pi_W"] & \\
M & & W
\end{tikzcd}
\end{equation}

By definition, a point $z\in P$ is an orthonormal frame of $N_{\pi(z)}\mathcal{F}$, or equivalently, an isometry $z: \mathbb{R}^q\rightarrow N_{\pi(z)}\mathcal{F}$. The \emph{fundamental one form}
$\eta$ on $P$ is given by
\begin{equation}\label{f-one-form} \eta (X_z)= z^{-1}( \pi_{*z}(X_z)), \, \, \, X_z\in T_zP.\end{equation} 
Here by a standard abuse of notation, $\pi_{*z}$ denotes the composition of the quotient map
$T_{\pi(z)}M\rightarrow N_{\pi(z)}\mathcal{F}=T_{\pi(z)}M/ T_{\pi(z)}\mathcal{F}$ and the
 tangent map $T_zP \rightarrow T_{\pi(z)}M$ of $\pi$ at $z$.  It was shown in \cite[Page 84]{Mo88} that $\eta$ is a $\mathbb{R}^q$-valued $\mathcal{F}_P$-basic form on $P$. Moreover, the transverse Riemannian metric $g$ induces on $P$ a $o(q)$-valued connection one form $\theta$, called the \emph{transverse Levi-Civita} connection, that is a $o(q)$-valued $\mathcal{F}_P$-basic form on $P$.
 %we define $q+r$ one forms $\eta_1, \cdots, \eta_{q}$ as follows. C 
 Choosing a dual basis $\{e^*_1, \cdots, e^*_r\}$ in $o(q)^*$, define
 \begin{equation}\label{lifted-metric} \alpha_i=<e^*_i, \theta>,\qquad  \forall\, 1\leq i\leq r, \qquad \displaystyle g_P:=\pi_M^* g+ \sum_{i=1}^{r} d\alpha_i\otimes d\alpha_i,\end{equation}
where $\langle \cdot, \cdot\rangle$ is the natural duality pairing between $o(q)^*$ and $o(q)$. By construction, $g_P$ is a transverse Riemannian metric on $P$, and projects to a genuine Riemannian metric $g_W$ on the Molino manifold $W$. 

%The metric $g_P$ further induces a Riemannian metric $g_W$ on $W$ as follows. For any two vector fields $X$ and $Y$ on $W$, 
%lift them to vector fields $\tilde{X}, \tilde{Y}$ on $P$ perpendicular to leaf closures of $\mathcal{F}_P$. 
%At any point $ x\in W$, choose a point $z \in P$ such that $\pi_W(z)=x$, and define 
%\begin{equation}\label{metric2}g_W(X_x, Y_x)= g_P(\tilde{X}_z, \tilde{Y}_z).\end{equation}
%Since $\tilde{X}$ and $\tilde{Y}$ are perpendicular to the leaf closures, they must be perpendicular to leaves as well. Thus
%$g_P(\tilde{X}, \tilde{Y})$ is a basic function relative to both the lifted foliation $\mathcal{F}_P$ and its leaf closures. It follows that the definition of $g_W$ at $x$ does not depend on the choice of $z \in \pi_W^{-1}(x)$, and provides us a Riemannian 
%metric on $W$. 
Now let $\bar{\xi}_M$ be a transverse Killing vector field on $M$, and let
$\xi_M$ be a foliate vector field representing $\bar{\xi}_M$ and preserving
the transverse Riemannian metric $g$. Then the flow $\{\varphi_t\}$ generated
by $\xi_M$ preserves $g$, and hence its tangent map preserves transverse
orthonormal frames for each $t\in \mathbb{R}$. It follows that $\{\varphi_t\}$
lifts to a flow $\{\Phi_t\}$ on $P$ commuting with the action of the structure
group $O(q)$ and preserving the leaves of the lifted foliation
$\mathcal{F}_P$. Let $\xi_P$ denote the induced foliate vector field on $P$.
If $\xi_M$ is everywhere tangent to the leaves of $\mathcal{F}$, then
$\xi_P$ is everywhere tangent to the leaves of $\mathcal{F}_P$. Thus the
transverse vector field on $(P,\mathcal{F}_P)$ determined by $\xi_P$ depends
only on the class $\bar{\xi}_M$. Moreover, it was shown in \cite[Sec. 3.3]{Mo88}
that both $\eta$ and $\theta$ are invariant under $\{\Phi_t\}$, and
consequently the transverse metric $g_P$ is invariant as well. Therefore
the above construction defines a natural lifting homomorphism
\begin{equation}\label{lifting-homo}
\pi_M^{\sharp}:\mathrm{iso}(M, \mathcal{F})\rightarrow \mathrm{iso}(P, \mathcal{F}_P).
\end{equation}

%The transverse tangent sheaf $\mathcal{F}_P$ of $P$ is the sheaf of Lie algebra of transverse vector fields on $M$ associated to the pre-sheaf $V \mapsto \mathfrak{X}( V, \mathcal{F}\vert_V)$, where $V$ ranges over all open subsets of $P$. 

The \emph{Molino's centralizer sheaf} of $P$ is the sheaf $\mathcal{C}_P$ associated to the pre-sheaf 
%consisting of all sections that centralize the global transverse vector fields:
\[ V\mapsto \mathfrak{C}_P(V):=\{v\in \mathfrak{X}(V, \mathcal{F}_P\vert_V)\,\vert\, [v, w]=0\, \text{for all $w\in \mathfrak{X}(P, \mathcal{F}_P)$}\}.
\]
In other words, the sections of $\mathfrak{C}_P(V)$ consist of all local transverse vector fields on $V$ that centralize the global transverse vector fields on $P$. We define the Molino centralizer sheaf $\mathfrak{C}_M$ on $(M, \mathcal{F})$ to be the sheaf associated to the presheaf  
$U \mapsto \mathfrak{C}(U)$ whose sections consists of all transverse Killing vector fields $u$ on $U$ with the property that
$\pi^{\sharp}(u) \in \mathfrak{C}_P(\pi_M^{-1}(U))$, where $U$ ranges over all open subsets of $M$. It is shown in \cite{Mo88} that $\mathfrak{C}_M$ is a locally constant sheaf. More precisely, for any $x\in M$, there exists an open subset $x\in U$, such that the space of sections of $\mathfrak{C}_M(U)$ is an $l$ dimensional real vector space. Moreover, it was shown in \cite{Mo88} that $l$ equals the dimension of the leaf closure of $\mathcal{F}_P$ minus the dimension of the foliation $\mathcal{F}_P$.

We are ready to review Sergiescu's transverse integration \cite{Ser85}. Suppose that $(M, \mathcal{F})$ is a complete Riemannian foliation of codimension $q$. Let $\mathcal{O}_1$ be the orientation line bundle of $TM/T\mathcal{F}$, the normal bundle of the foliation, let $\mathcal{O}_2$ be the orientation line bundle of the flat vector bundle associated to the Molino sheaf $\mathfrak{C}(M, \mathcal{F})$, and let $L=\mathcal{O}_1\otimes \mathcal{O}_2$. Clearly, $L$ is a one dimensional flat vector bundle over $M$. Thus the twisted de Rham differential complex $(\Omega^\bullet(M, L), d)$ is well-defined. As a space, $\Omega^\bullet(M, L)=\Gamma(\wedge^\bullet(T^*M)\otimes L)$.
Choose a family of trivializations $\{(U_{\alpha},\phi_{\alpha})\}$ of the flat line bundle $L$ such
that the corresponding transition functions are locally constant. On each
$U_{\alpha}$, a twisted differential form $\gamma$ has an representation $\gamma= \beta \otimes s$,
where $\beta$ is a differential form on $U_{\alpha}$ and $s$ is a nowhere vanishing local
section of $L$ over $U_{\alpha}$. The exterior derivative of $\gamma$ is given by the formula
\[ d\gamma:= d\beta \otimes s. \]
It is straightforward to check that $d\gamma$ does not depend on the choice of
a local trivialization, and thus defines a global element in $\Omega^{r+1}(M,L)$.
Similarly, given a vector field $X$ on $M$, define
\begin{equation} \label{Lie-derivative} \mathcal{L}_X\gamma = (\mathcal{L}_X\beta )\otimes s, \iota_{X}\gamma= (\iota_X \beta )\otimes s.\end{equation}

One checks easily the above definition does not depend on the choice of
a local trivialization, and extends the usual Lie derivative $\mathcal{L}_X$ and interior
product $\iota_X$ to twisted differential forms. We say that $\gamma \in \Omega^\bullet(M,L)$ is a
twisted basic differential form, if $\mathcal{L}_X\gamma= \iota_X \gamma= 0$, for all $X \in\Gamma(T\mathcal{F})$, and will
denote by $\Omega^\bullet(M,L,\mathcal{F})$ the space of twisted basic differential forms. It is
clear that $\Omega^\bullet(M,L,\mathcal{F})$ is invariant under the exterior derivative defined in
(\ref{Lie-derivative}). We call the cohomology of the differential complex $(\Omega^\bullet(M,L,\mathcal{F}),d)$
the twisted basic cohomology, and denote it by $H^\bullet(M,L, \mathcal{F})$. Moreover, the
same argument as given in the proof \cite[Prop. 7.5]{BT82} shows up to isomorphisms the twisted basic cohomologies are independent of the choice of local trivializations. Similarly, one defines
the complex of twisted compactly supported basic differential forms $(\Omega^\bullet_c(M,L,\mathcal{F}), d)$ and the twisted compactly supported basic de Rham cohomology $H^\bullet_c(M,L, \mathcal{F})$.

Now suppose $\gamma \in \Omega_c^q(M, L, \mathcal{F})$. In a local representation we write
$\gamma = \alpha \otimes s_1 \otimes s_2$, where $\alpha$ is a basic form and $s_1$, $s_2$
are local sections of $\mathcal{O}_1$ and $\mathcal{O}_2$, respectively. In particular,
$s_2$ admits a local expression $s_2 = v_1 \wedge \cdots \wedge v_l$, where
$\{v_1, \ldots, v_l\}$ locally trivializes the Molino sheaf. Since $\mathcal{L}_{v_i}\alpha = 0$
for all $1 \leq i \leq l$, a straightforward computation shows that
$\iota_{s_2}(\alpha \otimes s_1)$ is a twisted form basic with respect to the leaf closures on $P$.
Moreover, $\iota_{s_2}(\alpha \otimes s_1)$ is independent of the choice of local
representation of $\gamma$, and hence descends canonically to a well-defined element
$\gamma_s \in \Omega_c^m(W, L_W)$. Here $m = \dim(W)$, $L_W$ is the orientation line
bundle on $W$, and $\Omega_c^\bullet(W, L_W)$ denotes the space of compactly supported twisted
differential forms on $W$. The transverse integration of $\gamma$ is then defined by
\begin{equation}\label{t-integration} \int_{M/\mathcal{F}}\gamma =\int_W \gamma_s\end{equation}

We refer the interested reader to \cite{LS20} for a detailed exposition on the  properties satisfied by the Sergiescu's transverse integration. Among other things, the usual Stokes' theorem on differentiable manifolds naturally extends to the current setting. We note that for $\alpha \in \Omega^k(M, L, \mathcal{F})$ and $\beta \in \Omega_c^{q-k}(M, L, \mathcal{F})$,
the wedge product $\alpha \wedge \beta \in \Omega_c^q(M, L, \mathcal{F})$, so the transverse
integration $\int_{M/\mathcal{F}} \alpha \wedge \beta$ is well-defined. Moreover, by Stokes'
theorem for the transverse integration defined in (\ref{t-integration}), the pairing
descends to cohomology. 

%When $\mathcal{F}$ is transversally orientable, $L$ is trivial
%and the following reduces to the classical basic Poincar\'e duality.

\begin{thm}[Sergiescu {\cite{Ser85}}]\label{Ser}
Let $\mathcal{F}$ be a complete Riemannian foliation of codimension $q$. The integration pairing
\begin{equation}\label{eq:poincare-pairing}
\langle \cdot\,,\cdot \rangle \colon
H^k(M, \mathcal{F}) \times H_c^{q-k}(M, L, \mathcal{F})
\longrightarrow \mathbb{R},
\end{equation}
defined by
\begin{equation}
\langle\, [\alpha],\, [\beta]\, \rangle
\;=\; \int_{M/\mathcal{F}} \alpha \wedge \beta,
\end{equation}
is a non-degenerate pairing. Consequently, there is a natural isomorphism
\begin{equation}
H^k(M, \mathcal{F}) \;\cong\; \bigl(H_c^{q-k}(M, L, \mathcal{F})\bigr)^*.
\end{equation}
\end{thm}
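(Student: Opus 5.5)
The plan is the classical Mayer--Vietoris induction used for Poincaré duality on manifolds, carried out on the space of leaf closures $M/\overline{\mathcal F}$ rather than on $M$. The only genuinely foliation-theoretic input is a local duality statement near a single leaf closure.

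\textbf{Well-definedness.} First I check that the pairing \eqref{eq:poincare-pairing} descends to cohomology. If $\alpha$ is closed and $\beta=d\gamma$ with $\gamma$ compactly supported, then $\alpha\wedge d\gamma=\pm d(\alpha\wedge\gamma)$. By the Stokes theorem for \eqref{t-integration}, which is inherited from Stokes on $W$ via $\gamma\mapsto\gamma_s$, the integral of $\alpha\wedge d\gamma$ vanishes. So the map
\[
PD\colon H^k(M,\mathcal F)\to \bigl(H_c^{q-k}(M,L,\mathcal F)\bigr)^*
\]
is well defined, and the goal is to show it is an isomorphism.

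\textbf{Mayer--Vietoris.} For $\overline{\mathcal F}$-saturated open sets $U,V$ I build two sequences and compare them.
\begin{itemize}
\item The restriction Mayer--Vietoris sequence for $H^\bullet(\,\cdot\,,\mathcal F)$.
\item The extension-by-zero Mayer--Vietoris sequence for $H_c^\bullet(\,\cdot\,,L,\mathcal F)$.
\end{itemize}
Both require basic partitions of unity subordinate to saturated covers. These exist because $M/\overline{\mathcal F}$ is Hausdorff, identified with $W/O(q)$. Concretely, I pull back an $O(q)$-invariant partition of unity on $W$, which yields functions on $M$ that are constant on leaf closures and hence basic.

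The pairing is compatible with the two sequences: the squares commute up to sign, exactly as in \cite[Ch.~I, \S5]{BT82}. The five lemma then gives the following: if $PD$ is an isomorphism on $U$, $V$ and $U\cap V$, it is an isomorphism on $U\cup V$.

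I then take a countable cover of $M$ by saturated tubular neighbourhoods of leaf closures, chosen so that all finite intersections are again of the same type. Such a cover comes from a good cover of $W$ by $O(q)$-invariant tubes around orbits. Induction on the number of sets handles finite unions. For the passage to all of $M$, I use a disjoint-union argument on a locally finite refinement, as in Bott--Tu. Here
\[
H^\bullet(\coprod U_i)=\prod_i H^\bullet(U_i),\qquad H_c^\bullet(\coprod U_i)=\bigoplus_i H_c^\bullet(U_i),
\]
and the dual of a direct sum is the product.

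\textbf{Local step, the main obstacle.} It remains to prove duality for a saturated tube $U$ around a leaf closure $\overline{L}$. By Molino's local model, $U$ is equivariantly a disk bundle over $\overline{L}$, and the radial homotopy preserves basic forms. Hence $H^\bullet(U,\mathcal F)\cong H^\bullet(\overline L,\mathcal F|_{\overline L})$ twisted by the holonomy of the normal slice representation. Dually, $H_c^\bullet(U,L,\mathcal F)$ is computed by a Thom-type isomorphism along the normal slice, which shifts degree by the slice dimension and brings in the $\mathcal O_1$-twist.

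On $\overline L$, the foliation is a Lie $\mathfrak k$-foliation with dense leaves, where $\mathfrak k$ is the structural algebra. Its dimension is $l$, the rank of the Molino sheaf. Its basic cohomology is $H^\bullet(\mathfrak k)$, or its invariants under the finite holonomy of the slice.

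Two facts then finish the local step.
\begin{itemize}
\item $\mathfrak k$ is unimodular, since $\overline L$ is compact and the foliation there carries a transverse volume. Lie algebra cohomology of a unimodular algebra satisfies Poincaré duality, with $H^l(\mathfrak k)$ spanned by the volume element $\wedge^l\mathfrak k^*$.
\item That volume element is exactly a section of $\mathcal O_2$. This is why $L$ must include the orientation bundle of the Molino sheaf.
\end{itemize}
Combining these with classical duality on the normal slice, relative to its boundary and with the $\mathcal O_1$-twist, gives local duality.

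The delicate points are all in this step: matching the finite-group invariants on the two sides, and verifying that Sergiescu's integral restricted to $U$ agrees with this product pairing up to a nonzero constant. I would settle both by computing directly on $P$, lifting to $\pi_M^{-1}(U)$ and integrating over the corresponding $O(q)$-tube in $W$.
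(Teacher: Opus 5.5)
The paper gives no proof of this statement; it is quoted from Sergiescu, so your sketch has to stand on its own. The Mayer--Vietoris skeleton is the standard one, but the local step is wrong as stated.

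\textbf{The local model is misidentified.} Restricted to a leaf closure $\overline L\subset M$, $\mathcal F$ is in general \emph{not} a Lie $\mathfrak k$-foliation of codimension $l$. Lie $\mathfrak k$-foliations with dense leaves live upstairs, on the fibres $\pi_W^{-1}(w)$ of $P\to W$. The map $\pi_M\colon\pi_W^{-1}(w)\to\overline L$ has as fibres the orbits of the isotropy group $K_w\subset O(q)$ of $w$. This group is compact and typically positive-dimensional; it is not a finite holonomy group.

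Two consequences follow. First, $\operatorname{codim}_{\overline L}(\mathcal F|_{\overline L})=l-\dim K_w$. Second, $H^\bullet(\overline L,\mathcal F)$ is computed by the $K_w$-basic subcomplex of $\wedge^\bullet\mathfrak k^*$. That is a relative cohomology $H^\bullet(\mathfrak k,\mathfrak k_w)$, with $\pi_0(K_w)$-invariants, not $H^\bullet(\mathfrak k)$.

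For a concrete check, suspend over a closed surface a dense subgroup of $SO(3)$ acting on the round $S^2$. This is a codimension-$2$ Riemannian foliation with dense leaves, with $l=3$ and $K_w\cong SO(2)$. Its basic cohomology is $H^\bullet(M,\mathcal F)\cong H^\bullet(S^2)$, whereas $H^\bullet(\mathfrak{so}(3))$ lives in degrees $0$ and $3$.

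With your identification, local duality would hold in degree $l+\dim(\text{slice})$ instead of $q=(l-\dim K_w)+\dim(\text{slice})$, so the induction cannot start. What is actually needed:
\begin{itemize}
\item a twisted duality for the relative complex in degree $l-\dim K_w$;
\item glued to a $K_w$-equivariant Thom isomorphism on the slice;
\item plus a proof that the twisting line produced is $\mathcal O_1\otimes\mathcal O_2$.
\end{itemize}
Matching the $K_w$-characters on $\mathfrak k/\mathfrak k_w$, on $\mathfrak k_w$ and on the slice is the real content of the theorem. Your closing suggestion to compute on $P$ leads straight into this $K_w$-basic problem, which the sketch does not address.

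\textbf{The unimodularity step is also unjustified.} Every Lie foliation carries the basic transverse volume $\omega^1\wedge\cdots\wedge\omega^l$ whether or not $\mathfrak k$ is unimodular; in the non-unimodular case it is simply basic-exact. Moreover, for a complete foliation $\overline L$ need not be compact.

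For example, suspend a dense finitely generated subgroup of $\mathrm{Aff}^+(\mathbb R)$ over a closed surface, with a left-invariant transverse metric. This gives a complete Lie $\mathfrak{aff}(\mathbb R)$-foliation with dense leaves and a one-point leaf-closure space. Here $H^0(M,\mathcal F)=\mathbb R$ but $H^2(M,\mathcal F)=H^2(\mathfrak{aff}(\mathbb R))=0$.

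Duality can hold only because $\mathcal O_2$ enters the transverse integral through $s_2=v_1\wedge\cdots\wedge v_l$. It is therefore the flat determinant line of the Molino sheaf, whose monodromy can be the modular character $\det\operatorname{Ad}$ rather than just a sign. Locally you need $H^k(\mathfrak k)\cong H^{l-k}(\mathfrak k;\wedge^l\mathfrak k)^*$, not the unimodular statement you invoke.

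\textbf{The cover also needs justification.} You claim the saturated tubes can be chosen so that all finite intersections are again tubes. Intersections of tubes about different $O(q)$-orbits in $W$ are not tubes in general. You must specify a class of saturated open sets, closed under finite intersections, for which you actually prove local duality.
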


\section{Transverse Averaging Operators}\label{sec:averaging-operator}
 
The main result of this section is Theorem~\ref{average-operator}, which
constructs a transverse averaging operator for isometric Lie algebra actions on
Riemannian foliations. The overall strategy was explained in the introduction;
here we carry out the construction. We first use Molino theory to produce a
compact Lie group $N$ acting isometrically on the Molino manifold $W$. This
gives rise to an action of $N$ on the basic forms on $(P,\mathcal F_P)$, which
in turn induces, via
\[
\pi_M^*:\Omega^\bullet(M,\mathcal{F})\xrightarrow{\;\sim\;}
\Omega^\bullet(P,\mathcal{F}_P)_{\mathrm{bas}_{O(q)}},
\]
an action on basic forms on $(M,\mathcal F)$. The averaging operator
$\mathcal A$ is then defined from this induced action.

Throughout this section we retain the notation of
Section~\ref{integration}, and assume that $\mathcal{F}$ is a complete
Riemannian foliation on $M$ equipped with a fixed transverse Riemannian
metric $g$, and whose leaf closure space is compact.
 Suppose that we
are given an isometric transverse Lie algebra action
\[
\rho:\mathfrak{g}\to \mathfrak{X}(M,\mathcal{F}).
\]
Let $\xi\in \mathfrak{g}$, and let  $\bar{\xi}_P:=\pi_M^\sharp(\rho(\xi))$ be the image of $\rho(\xi)$ under the lifting homomorphism~(\ref{lifting-homo}). 
One verifies that $\bar{\xi}_P$ projects to a Killing vector field on the Molino manifold
$(W, g_W)$. Thus we obtain
a Lie algebra homomorphism
\[
\mathfrak{g}\to \mathrm{iso}(W,g_W),
\]
where $\mathrm{iso}(W,g_W)$ denotes the Lie algebra of Killing vector fields on
$(W,g_W)$. Let $G$ be the simply connected Lie group with Lie algebra
$\mathfrak{g}$. Since $W$ is compact, every Killing vector field on $W$ is
complete, and therefore the action of $\mathfrak{g}$ on $W$ integrates to an
isometric $G$-action on $(W,g_W)$, giving a homomorphism
\begin{equation}\label{group-action}
G\to \mathrm{Iso}(W,g_W),
\end{equation}
where $\mathrm{Iso}(W,g_W)$ denotes the Riemannian isometry group of $(W,g_W)$. Under our standing assumption, $W$
is compact. Thus $\mathrm{Iso}(W,g_W)$ is a compact Lie group. Let $G_1$ denote the
image of $G$ under \eqref{group-action}, and set
\begin{equation}\label{compact-gp}
N:=\overline{G_1}.
\end{equation}
Then $N$ is a compact and connected Lie group. Note also that the action of $O(q)$ on $P$ is foliate with
respect to the foliation by leaf closures, and therefore descends to an action
of $O(q)$ on $W$. The actions of $N$ and $O(q)$ on $W$ commute.

To proceed, denote by
\[
\Omega^\bullet(P,\mathcal{F}_P)_{\mathrm{bas}_{O(q)}}
\]
the subspace of basic forms on $P$ that are also basic with respect to the
structure group $O(q)$-action on $P$. Then there is a natural isomorphism of
basic de Rham complexes
\begin{equation}\label{pull-back}
\pi_M^*:
(\Omega^\bullet(M,\mathcal{F}),d)\xrightarrow{\;\sim\;}
(\Omega^\bullet(P,\mathcal{F}_P)_{\mathrm{bas}_{O(q)}},d),
\qquad
\beta\mapsto \pi_M^*\beta.
\end{equation}
From now on we denote by
\begin{equation}\label{canonical-iso}
\psi:
(\Omega^\bullet(P,\mathcal{F}_P)_{\mathrm{bas}_{O(q)}},d)
\to
(\Omega^\bullet(M,\mathcal{F}),d)
\end{equation}
the inverse of the isomorphism~(\ref{pull-back}).

Next let $\eta$ be the fundamental one-form on $P$ as in
\eqref{f-one-form}, and choose a dual basis
\[
\{\lambda_1,\dots,\lambda_q\}\subset (\mathbb{R}^q)^*.
\]
Define
\begin{equation}\label{co-frame}
\eta_i=\lambda_i\circ \eta,
\qquad
1\le i\le q.
\end{equation}
It was shown in \cite{Mo88} that if $\phi:M\to M$ is a diffeomorphism
preserving the transverse Riemannian metric $g$ on $M$, then $\phi$ lifts
naturally to a diffeomorphism $\bar{\phi}:P\to P$ preserving the transverse
Riemannian metric $g_P$. Moreover, $\bar{\phi}$ leaves $\eta$ invariant, and
hence preserves each $\eta_i$. This observation plays a crucial role in the
construction of the transverse averaging operator.

Now observe that $\eta_1,\dots,\eta_q$ are $\mathcal{F}_P$-basic and horizontal
with respect to the $O(q)$-action on $P$. Consequently, each form
\[
\gamma\in \Omega^\bullet(P,\mathcal{F}_P)_{\mathrm{bas}_{O(q)}}
\]
admits a unique representation
\begin{equation}\label{basic-representation}
\gamma=
\sum_{1\le i_1<\cdots<i_k\le q}
f_{i_1\cdots i_k}\,
\eta_{i_1}\wedge\cdots\wedge \eta_{i_k}.
\end{equation}
Here the coefficients $f_{i_1\cdots i_k}$ are basic functions with respect to
$\mathcal{F}_P$. Each $f_{i_1\cdots i_k}$ is also basic with respect to the
foliation by leaf closures, and therefore descends to a smooth function
\[
\bar{f}_{i_1\cdots i_k}\in C^\infty(W).
\]

\begin{defn}\label{action-on-forms}
For $h\in N$ and $
\gamma\in \Omega^\bullet(P,\mathcal{F}_P)_{\mathrm{bas}_{O(q)}}$ 
with representation \eqref{basic-representation}, define
\begin{equation}\label{action-formula}
h^*\gamma=
\sum_{1\le i_1<\cdots<i_k\le q}
\pi_W^*(h^*\bar{f}_{i_1\cdots i_k})\,
\eta_{i_1}\wedge\cdots\wedge \eta_{i_k}.
\end{equation}
This defines an action of the compact and connected Lie group $N$ on
$\Omega^\bullet(P,\mathcal{F}_P)_{\mathrm{bas}_{O(q)}}$. 

Through the isomorphism of cochain complexes
\[
\psi:
\Omega^\bullet(P,\mathcal{F}_P)_{\mathrm{bas}_{O(q)}}
\to
\Omega^\bullet(M,\mathcal{F}),
\]
this further induces an $N$ action on $\Omega^\bullet(M,\mathcal{F})$ by

\begin{equation}\label{action-formula2}
h^*\beta:=\psi\bigl(h^*\pi_M^*\beta\bigr), 
\end{equation}
where $h\in N$ and $\beta \in \Omega^\bullet(M,\mathcal{F})$.
\end{defn}

The following lemma shows that \eqref{action-formula} defines an action of
$N$ on $\Omega^\bullet(P,\mathcal{F}_P)_{\mathrm{bas}_{O(q)}}$ by cochain
automorphisms. Consequently, the induced action of $N$ on
$\Omega^\bullet(M,\mathcal{F})$ also commutes with the exterior differential.

\begin{lem}\label{action-property}
Let
\[
\gamma\in \Omega^\bullet(P,\mathcal{F}_P)_{\mathrm{bas}_{O(q)}}
\]
have the representation \eqref{basic-representation}, and let $h\in N$. Then:
\begin{itemize}
\item[(a)] The form $h^*\gamma$ defined by \eqref{action-formula} lies in
$\Omega^\bullet(P,\mathcal{F}_P)_{\mathrm{bas}_{O(q)}}$.
\item[(b)] One has
\[
d(h^*\gamma)=h^*(d\gamma).
\]
\end{itemize}
\end{lem}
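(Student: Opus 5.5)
The plan is to prove (a) directly and to get (b) from a density argument: first establish it on the dense subgroup $G_1\subset N$, where $h^*$ agrees with an honest pullback of forms on $P$, and then pass to the closure $N$ by continuity.

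For (a), write $h^*\gamma=\sum \pi_W^*(h^*\bar f_I)\,\eta_I$, where $\eta_I=\eta_{i_1}\wedge\cdots\wedge\eta_{i_k}$.
\begin{itemize}
\item \emph{$\mathcal F_P$-basic and $O(q)$-horizontal.} Each coefficient $\pi_W^*(h^*\bar f_I)$ is constant on the leaf closures, hence $\mathcal F_P$-basic. The $\eta_i$ are $\mathcal F_P$-basic and $O(q)$-horizontal. So $h^*\gamma$ is $\mathcal F_P$-basic and $O(q)$-horizontal.
\item \emph{$O(q)$-invariance.} For $a\in O(q)$ we have $R_a^*\eta=a^{-1}\eta$. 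Hence $R_a^*\eta_I=\sum_J c_{IJ}(a)\,\eta_J$, where $c(a)$ is the induced linear action on $\Lambda^k(\mathbb R^q)^*$.
\item Using uniqueness of the representation \eqref{basic-representation}, invariance of $\gamma$ is equivalent to the transformation rule $\bar f_J\circ a=\sum_I \bar f_I\,c_{IJ}(a)$ on $W$, for the descended $O(q)$-action on $W$.
\item Since the $N$- and $O(q)$-actions on $W$ commute, the functions $h^*\bar f_I$ satisfy the same rule. Reading the equivalence backwards gives $R_a^*(h^*\gamma)=h^*\gamma$.
\end{itemize}

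For (b), I first treat $h$ in the image of $\exp(t\xi)$, $\xi\in\mathfrak g$.
\begin{itemize}
\item Let $\xi_P$ be the foliate lift constructed before \eqref{lifting-homo}. It commutes with the $O(q)$-action, satisfies $\mathcal L_{\xi_P}\eta=0$, and is $\pi_W$-related to the Killing field $\xi_W$ on $W$.
\item Define the operator $D_\xi\gamma:=\mathcal L_{\xi_P}\gamma$ on $\Omega^\bullet(P,\mathcal F_P)_{\mathrm{bas}_{O(q)}}$. Because $\mathcal L_{\xi_P}\eta_i=0$, in the representation \eqref{basic-representation} it acts only on the coefficients: $D_\xi\gamma=\sum \pi_W^*(\xi_W\bar f_I)\,\eta_I$.
\item Consequently the curve $t\mapsto \exp(t\xi)^*\gamma$, defined through \eqref{action-formula}, is the unique solution of $\frac{d}{dt}\gamma_t=D_\xi\gamma_t$ with $\gamma_0=\gamma$. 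Concretely, its coefficients are $\bar f_I\circ\varphi^W_t$, where $\varphi^W_t$ is the complete flow of $\xi_W$.
\item Cartan's formula gives $d\mathcal L_{\xi_P}=\mathcal L_{\xi_P}d$. Both $t\mapsto d(\gamma_t)$ and $t\mapsto \exp(t\xi)^*(d\gamma)$ then solve the same linear equation with initial value $d\gamma$.
\item To make this rigorous without integrating $\xi_P$ on $P$, I would pass to coefficients. In the frame $\{\eta_I\}$ the equation becomes the transport equation along the complete flow of $\xi_W$ on the compact manifold $W$, and uniqueness for it is elementary.
\item If $\xi_P$ happens to be complete, one can argue more directly: $h^*\gamma=\Phi_t^*\gamma$, since $\Phi_t^*\eta_i=\eta_i$, and pullback commutes with $d$.
\end{itemize}
Since such elements $\exp(t\xi)$ generate $G_1$, and both sides of (b) are compatible with composition, (b) holds for all $h\in G_1$.

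Finally extend to $N=\overline{G_1}$. The action map $N\times W\to W$ is smooth, so $h\mapsto h^*\bar f$ is continuous into $C^\infty(W)$.
\begin{itemize}
\item Write $d\eta_i=-\sum_j\theta_{ij}\wedge\eta_j$ by torsion-freeness of the transverse Levi-Civita connection. Then the coefficients of both $d(h^*\gamma)$ and $h^*(d\gamma)$ with respect to the fixed forms $\eta_I$, $\theta_{ij}\wedge\eta_J$ and $d\eta_I$ depend continuously on $h$ in the $C^\infty$ topology.
\item Equality on the dense subgroup $G_1$ therefore gives equality on all of $N$.
\end{itemize}

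The main obstacle I expect is the identification of the abstractly defined $h^*\gamma$, for $h\in G_1$, with a genuine Lie-derivative flow on $P$. The lifted foliate fields $\xi_P$ are only defined up to leafwise fields and need not be complete, and $h^*\gamma$ is defined purely via $W$. My first attempt will be the ODE/uniqueness argument in the $\eta_I$-frame described above, because it reduces everything to the complete flow on the compact manifold $W$.
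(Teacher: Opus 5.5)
Your argument is correct, but it is organized differently from the paper's. The paper derives (a) and the one-parameter case of (b) together from the single identity $\bar\Phi_t^*\gamma=\Phi_t^*\gamma$, where $\Phi_t$ is the flow on $P$ of the lift $\xi_P$. Because $\Phi_t$ fixes each $\eta_i$ and commutes with $O(q)$, the abstractly defined $\exp(t\xi)^*\gamma$ is an honest pullback, hence $O(q)$-invariant and compatible with $d$. Both properties are then passed to $G_1$ by products and to $N=\overline{G_1}$ by continuity.

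You instead prove (a) for every $h\in N$ at once. You encode $O(q)$-invariance as a linear transformation rule for the coefficient functions on $W$ and use that $N$ commutes with the descended $O(q)$-action. This removes the closure step from (a). However, that commutation is itself established by the same flow-and-density reasoning ($\xi_W$ commutes with $O(q)$ because $\xi_P$ does, then pass to the closure), so the density argument is relocated rather than avoided.

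For (b) you replace the pullback identity by uniqueness for the transport equation $\partial_t\bar u_J=\xi_W\bar u_J$ on $W$. This uses only the local facts $\mathcal L_{\xi_P}\eta_i=0$ and $d\mathcal L_{\xi_P}=\mathcal L_{\xi_P}d$, so it needs no completeness of $\xi_P$. That is more robust, but the worry behind it does not arise here. Take $\xi_M$ orthogonal to the leaves for a complete bundle-like metric. Its length is the transverse norm of $\rho(\xi)$, which is constant on leaf closures and hence bounded because $M/\overline{\mathcal F}$ is compact. So $\xi_M$ and its lift $\xi_P$ are complete, and your ``direct'' alternative, which is exactly the paper's route, is always available.

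In the final continuity step the torsion identity $d\eta_i=-\sum_j\theta_{ij}\wedge\eta_j$ is unnecessary. It suffices that $d(h^*\gamma)$ and $h^*(d\gamma)$ depend continuously on $h$ pointwise on $P$, which follows from continuity of $h\mapsto h^*\bar f_I$ in $C^1(W)$.
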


\begin{proof}
To prove~(a), it suffices to show that $h^*\gamma$ is $O(q)$-invariant, since by construction it is clearly $\mathcal{F}_P$-basic and $O(q)$-horizontal. We therefore assume that $\gamma$ is written as in \eqref{basic-representation}.

Let $\xi\in \mathfrak{g}$, let $\xi_M$ be a foliate vector field representing
$\rho(\xi)$ and preserving the transverse Riemannian metric $g$ on $M$, and let $\xi_P$ be its natural lift to $P$. Denote by
$\{\Phi_t\}$ the flow generated by $\xi_P$ on $P$, and by
$\{\bar{\Phi}_t\}$ the induced flow on $W$. By construction,
\[
\pi_W^*(\bar{\Phi}_t^*\bar{f}_{i_1\cdots i_k})
=
\Phi_t^*f_{i_1\cdots i_k}.
\]
Since each $\eta_i$ is invariant under $\Phi_t^*$, it follows that
\[
\sum_{i_1<\cdots<i_k}
\pi_W^*(\bar{\Phi}_t^*\bar{f}_{i_1\cdots i_k})\,
\eta_{i_1}\wedge\cdots\wedge \eta_{i_k}
=
\Phi_t^*
\Bigl(
\sum_{i_1<\cdots<i_k}
f_{i_1\cdots i_k}\,
\eta_{i_1}\wedge\cdots\wedge \eta_{i_k}
\Bigr).
\]
In other words, in the notation of \eqref{action-formula},
\begin{equation}\label{invariance}
\bar{\Phi}_t^*\gamma=\Phi_t^*\gamma.
\end{equation}
Since $\Phi_t$ commutes with the action of the structure group $O(q)$ and
$\gamma$ is $O(q)$-basic, the right-hand side of \eqref{invariance} is
$O(q)$-invariant. Therefore $\bar{\Phi}_t^*\gamma$ is $O(q)$-invariant.

This shows that $h^*\gamma$ is $O(q)$-invariant for every $h$ belonging to a
one-parameter subgroup of $G_1$. Since $G_1$ is connected, every element of
$G_1$ is a finite product of elements lying in one-parameter subgroups of
$G_1$, and hence $h^*\gamma$ is $O(q)$-invariant for every $h\in G_1$. Since
$O(q)$-invariance is a closed condition and $N=\overline{G_1}$, it follows by
continuity that $h^*\gamma$ is $O(q)$-invariant for every $h\in N$. This proves~(a).

For every $h$ belonging to a one-parameter subgroup of $G_1$, the identity
\begin{equation}\label{commutes-d}
d(h^*\gamma)=h^*(d\gamma)
\end{equation}
follows from \eqref{invariance} and the fact that exterior differentiation
commutes with pullback. Since $G_1$ is connected, every element of $G_1$ can be
written as a finite product of elements lying in one-parameter subgroups of
$G_1$. Hence \eqref{commutes-d} holds for every $h\in G_1$. The same identity
for all $h\in N$ then follows by continuity. This proves~(b).
\end{proof}

We now define the transverse averaging operator.

\begin{defn}\label{t-a-operator}
For every $\beta\in \Omega^\bullet(M,\mathcal{F})$, define
\begin{equation}\label{a-operator}
\mathcal{A}(\beta)=\int_N h^*\beta\,dh,
\end{equation}
where $dh$ denotes the normalized left-invariant Haar measure on the compact
Lie group $N$, so that
\[
\int_N 1\,dh=1.
\]
\end{defn}

\begin{lem}\label{lem:avaraging-commutes-d}
The averaging operator $\mathcal{A}$ commutes with the exterior differential $d$,
i.e., $d\mathcal{A}(\beta) = \mathcal{A}(d\beta)$ for all $\beta \in \Omega^\bullet(M,\mathcal{F})$.
\end{lem}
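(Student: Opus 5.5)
The plan is to reduce the statement to Lemma~\ref{action-property}(b) and then justify interchanging $d$ with the Haar integral. By Definition~\ref{action-on-forms}, $h^*\beta=\psi(h^*\pi_M^*\beta)$. Since $\pi_M^*$ and $\psi$ are mutually inverse isomorphisms of cochain complexes, Lemma~\ref{action-property}(b) gives
\[
d(h^*\beta)=\psi\bigl(d\,h^*\pi_M^*\beta\bigr)=\psi\bigl(h^*\pi_M^*d\beta\bigr)=h^*(d\beta)
\]
for every $h\in N$. It therefore suffices to show that
\[
d\int_N h^*\beta\,dh=\int_N d(h^*\beta)\,dh .
\]
Since $\psi$ is linear and commutes with $d$, and since $\psi$ is given pointwise by restricting to the image of a local section of $\pi_M$ (which commutes with integrals of smooth families), I will verify the interchange on $P$ for $\gamma=\pi_M^*\beta$ and then apply $\psi$.

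On $P$, use the representation \eqref{basic-representation}. Then
\[
\int_N h^*\gamma\,dh=\sum \pi_W^*\Bigl(\int_N h^*\bar f_{i_1\cdots i_k}\,dh\Bigr)\,\eta_{i_1}\wedge\cdots\wedge\eta_{i_k},
\]
because the forms $\eta_I$ do not depend on $h$. The main obstacle is smoothness and the differentiation under the integral sign. Here the action map $N\times W\to W$ is smooth, because $N$ is a closed subgroup of the compact Lie group $\mathrm{Iso}(W,g_W)$, which acts smoothly on $W$. Hence $(h,w)\mapsto \bar f(h\cdot w)$ is smooth on the compact manifold $N\times W$. Standard differentiation under the integral sign then shows that $\bar F:=\int_N h^*\bar f\,dh$ is smooth and that $d\bar F=\int_N d(h^*\bar f)\,dh$.

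Applying $d$ to the displayed sum and using the Leibniz rule gives $\sum d\pi_W^*\bar F_I\wedge\eta_I+\sum\pi_W^*\bar F_I\,d\eta_I$. Each term is linear in the coefficient and is obtained from the corresponding term of $d(h^*\gamma)$ by integrating over $N$. This uses that the $\eta_I$ and $d\eta_I$ are fixed forms and that the differential interchanges with the integral, as shown above. Therefore $d\int_N h^*\gamma\,dh=\int_N d(h^*\gamma)\,dh$.

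Combining this with $d(h^*\gamma)=h^*d\gamma$ gives $d\mathcal A(\beta)=\mathcal A(d\beta)$ after applying $\psi$. One should also note that $\int_N h^*\gamma\,dh$ again lies in $\Omega^\bullet(P,\mathcal F_P)_{\mathrm{bas}_{O(q)}}$, so that $\psi$ can be applied to it. It is $\mathcal F_P$-basic and $O(q)$-horizontal by its form, and it is $O(q)$-invariant because each $h^*\gamma$ is invariant by Lemma~\ref{action-property}(a) and invariance is preserved under integration.
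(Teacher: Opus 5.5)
Your proposal is correct and follows the same route as the paper: deduce $d(h^*\beta)=h^*(d\beta)$ for all $h\in N$ from Lemma~\ref{action-property}(b) and the fact that $\psi$ is a cochain isomorphism, then interchange $d$ with the Haar integral. The only difference is that you justify the interchange explicitly, via the coframe representation on $P$, smoothness of $N\times W\to W$, differentiation under the integral sign, and a check that the average stays in $\Omega^\bullet(P,\mathcal F_P)_{\mathrm{bas}_{O(q)}}$, whereas the paper simply cites continuity of $d$ on $\Omega^\bullet(M,\mathcal F)$.
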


\begin{proof}
Lemma \ref{action-property} (b), together with the fact that $\psi$ is an isomorphism of cochain complexes, implies that $d(h^*\beta) = h^*(d\beta)$ for every $h \in N$.
Since $d$ is a continuous linear operator on $\Omega^\bullet(M,\mathcal{F})$, it commutes
with the integral against the Haar measure on $N$. Therefore
\[
d\mathcal{A}(\beta) = d\int_N h^*\beta\, dh
= \int_N d(h^*\beta)\, dh
= \int_N h^*(d\beta)\, dh
= \mathcal{A}(d\beta). \qedhere
\]
\end{proof}
%Now define 
%\begin{equation} \label{averaging-complex} A^i(M, \mathcal{F})=\mathcal{A}(\Omega^i(M, \mathcal{F})).\end{equation}

%It follows easily from Lemma \ref{action-property} that $A^i(M, \mathcal{F})\subset \Omega^i(M, \mathcal{F})$; moreover, Lemma \ref{lem:avaraging-commutes-d} implies that $d(A^i(M, \mathcal{F})) \subset A^{i+1}(M, \mathcal{F})$. Thus $(A^*(M, \mathcal{F}), d)$ is a differential subcomplex of $(\Omega(M, \mathcal{F}), d)$.

\begin{lem}\label{chain-homotopy}  Let $h \in G_1$, and let $\beta \in \Omega^k(M,\mathcal{F})$ be a closed basic form. We have that
\[ [h^*\beta]=[\beta] \in H^{k}(M, \mathcal{F}).\]
\end{lem}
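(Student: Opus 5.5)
The plan is to reduce to elements of one-parameter subgroups of $G_1$ and then use a homotopy (Cartan) formula along the flow. Since $G_1$ is connected, every $h\in G_1$ is a finite product $h=h_1\cdots h_m$ with each $h_j$ in a one-parameter subgroup. Identity~\eqref{invariance} and the definition~\eqref{action-formula2} show that the action on $\Omega^\bullet(M,\mathcal F)$ is by pullbacks, so the assignment $h\mapsto h^*$ is (anti)multiplicative. Moreover, each $h_j^*$ preserves closed forms by Lemma~\ref{action-property}(b). It therefore suffices to show $[h^*\beta]=[\beta]$ when $h=\bar\Phi_t$ is the image of $\exp(t\xi)$ for some $\xi\in\mathfrak g$. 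Induction on $m$ then gives the general case, using
\[
[(h_1\cdots h_m)^*\beta]=[h_m^*\cdots h_1^*\beta]
\]
in whichever order the product is taken.

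Fix $\xi\in\mathfrak g$ and a foliate representative $\xi_M$ preserving $g$, with flow $\varphi_t$ on $M$ and lifted flow $\Phi_t$ on $P$. By \eqref{invariance}, $\bar\Phi_t^*\pi_M^*\beta=\Phi_t^*\pi_M^*\beta$. Since $\Phi_t$ lifts $\varphi_t$, we have $\pi_M\circ\Phi_t=\varphi_t\circ\pi_M$, so $\Phi_t^*\pi_M^*\beta=\pi_M^*\varphi_t^*\beta$. Applying $\psi$ gives
\[
\bar\Phi_t^*\beta=\varphi_t^*\beta .
\]
This formula needs $\varphi_t$ to be globally defined. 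I would obtain completeness of $\xi_M$ from the completeness of the Riemannian foliation together with the compactness of $M/\overline{\mathcal F}$: one can choose a representative whose flow is complete, for instance by modifying $\xi_M$ by a leafwise field so that it is orthogonal to the leaves for a complete bundle-like metric, which makes it bounded. If global completeness is troublesome, one can alternatively argue on $P$: $\Phi_t$ covers the complete flow on the compact $W$, and the basic forms involved are pulled back from data on $W$ via~\eqref{action-formula}, so only the $W$-flow is ever used.

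The form $\varphi_t^*\beta$ is basic because $\varphi_t$ preserves $\mathcal F$. Cartan's formula gives
\[
\frac{d}{dt}\varphi_t^*\beta=\varphi_t^*\mathcal L_{\xi_M}\beta=d\bigl(\varphi_t^*\iota_{\xi_M}\beta\bigr),
\]
using $d\beta=0$. Here $\iota_{\xi_M}\beta=\iota(\xi)\beta$ is basic, and so is $\varphi_s^*\iota(\xi)\beta$. Integrating in $t$,
\[
\varphi_t^*\beta-\beta=d\int_0^t\varphi_s^*\iota(\xi)\beta\,ds .
\]
The integral of a smooth family of basic forms is basic, since the conditions $\iota_X=\mathcal L_X=0$ for $X\in\mathfrak X(\mathcal F)$ are linear and commute with $\int ds$. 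Hence $[\bar\Phi_t^*\beta]=[\beta]$ in $H^k(M,\mathcal F)$.

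The main obstacle is the identification $\bar\Phi_t^*\beta=\varphi_t^*\beta$ on all of $M$, which hinges on completeness of the flow of the foliate representative. I would handle this first via the bundle-like metric argument and fall back on the $W$-based formulation if that fails. The remaining steps are formal.
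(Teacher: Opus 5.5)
Your proof is correct and is essentially the paper's argument. The paper runs the same Cartan-formula homotopy $\bar\Phi_t^*\gamma-\gamma=d\int_0^t\Phi_s^*\iota_{\xi_P}\gamma\,ds$ upstairs on $P$ for $\gamma=\pi_M^*\beta$, checking that $\iota_{\xi_P}\gamma$ stays $O(q)$-basic. You instead push the computation down to $M$ via $\psi$ and $\pi_M\circ\Phi_t=\varphi_t\circ\pi_M$, and both arguments then conclude because $G_1$ is generated by one-parameter subgroups.

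Your extra step of replacing $\xi_M$ by its leaf-orthogonal component adds something the paper leaves implicit. That component is bounded because $|\bar\xi_M|_g$ is basic and $M/\overline{\mathcal F}$ is compact, so its flow exists globally; the paper uses the flows $\varphi_t$ and $\Phi_t$ for all $t$ without comment.
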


\begin{proof} It suffices to show that for a closed basic form $\gamma \in \Omega^k(P, \mathcal{F}_P)_{\text{bas}_{O(q)}}$, $[h^*\gamma]=[\gamma]\in H^k(\Omega(P, \mathcal{F}_P)_{\text{bas}_{O(q)}}, d)$. We retain the notation used in the proof of Lemma \ref{action-property}. Let  $\xi \in \mathfrak{g}$, and let 
$\{\bar{\Phi}_t\}$ be the flow generated by the fundamental vector field on $W$ induced by $\xi$, and let $\{\Phi_t\}$ be the flow induced by a foliate vector field $\xi_P$ on $P$ representing $\pi^{\sharp}_M(\xi)$.  It was shown in the proof of Lemma \ref{action-property} that $\bar{\Phi}_t^*\gamma =\Phi_t^*\gamma$. Therefore
\[ \bar{\Phi}_t^*\gamma -\gamma =\Phi_t^*\gamma -\gamma
= \int_0^t\, \dfrac{d \Phi_s^*\gamma}{ds}
= \int_0^t\, \Phi^*_s\mathcal{L}_{\xi_P} \gamma 
=\int_0^t\, \Phi^*_s(d\iota_{\xi_P}+ \iota_{\xi_P} d)\gamma
=d\int_0^t\, \Phi^*_s\iota_{\xi_P}\gamma.
\]
Here we have used the fact that $\gamma$ is a closed form. Since $\xi_P$ preserves the foliation $\mathcal{F}_P$ and is invariant under the $O(q)$ action on $P$, the form $\iota_{\xi_P}\gamma\in \Omega(P, \mathcal{F}_P)_{\text{bas}_{O(q)}}$. Hence $\Phi^*_s\iota_{\xi_P}\gamma \in \Omega(P, \mathcal{F}_P)_{\text{bas}_{O(q)}}$  for every $s$. The assertion of Lemma \ref{chain-homotopy} now follows from the fact that $G_1$ is connected, and every $h\in G_1$
is a finite product of elements belonging to a one-parameter subgroup of $G_1$.

\end{proof}

%Choose a closed test form $\beta \in \Omega^{q-k}_c(M, L, \mathcal{F})$.

The extension from $G_1$ to all of $N$ is carried out in the proof
of Theorem~\ref{average-operator} below. The key point is that for any given
$\beta \in \Omega^\bullet(M, \mathcal{F})$ and
$\gamma \in \Omega^\bullet_c(M, L,\mathcal{F})$, the function
\begin{equation}\label{continuity}
N \to \mathbb{R}, \quad h \mapsto \int_{M/\mathcal{F}} h^*\beta \wedge \gamma
\end{equation}
is continuous on $N$ and constant on the dense subgroup $G_1$, hence constant
on all of $N$. We are now ready to prove Theorem~\ref{average-operator}.

\begin{proof}
Clearly, for all $\beta \in \Omega^k(M, \mathcal{F})$ and all $h\in N$, we have
$h^*\mathcal{A}(\beta)=\mathcal{A}(\beta)$. It follows that
$\mathcal{A}(\beta) \in \Omega^k(M, \mathcal{F})^{\mathfrak{g}}$. Next we show that,
for every closed basic form $\beta\in \Omega^{k}(M, \mathcal{F})$ and every $h\in N$,
one has $[h^*\beta]=[\beta]$.

Consider the function \eqref{continuity}, where $\beta\in \Omega(M, \mathcal{F})$
is a closed basic form representing a class in $H^k(M,\mathcal F)$, and
$\gamma \in \Omega_c^\bullet(M, L, \mathcal{F})$ is a closed twisted basic form
representing a class in $H^{q-k}_c(M, L, \mathcal{F})$. We first verify that the map
\eqref{continuity} is continuous. Since $\gamma$ is compactly supported,
$h^*\beta \wedge \gamma \in \Omega_c^q(M, L, \mathcal{F})$, and by \eqref{t-integration}
the map \eqref{continuity} is equal to
$h \mapsto \int_W (h^*\beta \wedge \gamma)_s$.
By Definition~\ref{action-on-forms}, the action of $N$ on $\beta$ is determined by the pullback of
globally defined smooth functions on the compact Molino manifold $W$. Since $N$
acts smoothly on $W$, these functions converge uniformly on $W$ as $h \to h_0$ in $N$.
The dominated convergence theorem therefore gives
\[
\int_W (h^*\beta \wedge \gamma)_s \longrightarrow \int_W (h_0^*\beta \wedge \gamma)_s,
\]
establishing continuity.

By Lemma~\ref{chain-homotopy}, the function \eqref{continuity} is constant
on the dense subgroup $G_1$ of $N$. Hence it is constant on all of $N$. In other
words, for all $h\in N$ we have
\[
\int_{M/\mathcal{F}} h^*\beta \wedge \gamma
=
\int_{M/\mathcal{F}} \beta\wedge \gamma.
\]

Therefore
\[
\begin{split}
\int_{M/\mathcal{F}} \mathcal{A}(\beta)\wedge \gamma
&=\int_{M/\mathcal{F}} \left(\int_N h^*\beta\, dh\right) \wedge \gamma \\
&=\int_N \left(\int_{M/\mathcal{F}} h^*\beta \wedge \gamma\right) dh \\
&=\int_N \left(\int_{M/\mathcal{F}} \beta \wedge \gamma\right) dh \\
&=\int_{M/\mathcal{F}} \beta \wedge \gamma,
\end{split}
\]
where the second equality follows from Fubini's theorem.

Since $[\gamma]\in H_c^{q-k}(M, L, \mathcal{F})$ is arbitrary, Theorem~\ref{Ser}
implies that $[\mathcal{A}(\beta)]=[\beta]$.
\end{proof}

We are ready to prove Theorem \ref{thm:E1}. The key difficulty in the proof is the existence of an averaging operator on the space of polynomial-valued basic differential forms. Once this is established, the proof proceeds via a short exact sequence argument, which we learned from Sjamaar in the context of invariant $G$-complexes in private communications.

\begin{proof}[Proof of Theorem~\ref{thm:E1}]

We retain the notation introduced in Section \ref{sec:background}. Let $C^{ij}:= S^i\mathfrak{g}^* \otimes \Omega^{j-i}(M, \mathcal{F})$, let $Z^{ij}\subset C^{ij}$ be the kernel of the vertical differential $d$, and let $B^{ij}= d(C^{ij-i})$.  Note that elements in $C^{\bullet\bullet}$ may be naturally identified with a polynomial-valued basic differential form on the foliated manifold $(M,\mathcal{F})$. The transverse averaging operator $\mathcal{A}: \Omega^\bullet(M, \mathcal{F})\rightarrow \Omega^\bullet(M, \mathcal{F})^{\mathfrak{g}}$ naturally extends to an equivariant transverse averaging operator 
\begin{equation}\label{e-t-a-operator} \mathcal{A}_{\mathfrak{g}}: C^{ij} \rightarrow \Omega_{\mathfrak{g}}^{ij}(M, \mathcal{F})\end{equation}  which is a projection. 

Let $Z^{ij}_{\mathfrak{g}}\subset  \Omega^{ij}_{\mathfrak{g}}(M, \mathcal{F})$ be the kernel of the vertical differential, and let 
 $B^{ij}_{\mathfrak{g}}= d(\Omega^{ij}_{\mathfrak{g}}(M, \mathcal{F}))$. Since the equivariant transverse averaging operator (\ref{e-t-a-operator}) is a chain map that commutes with $d$, we have that $Z^{ij}_{\mathfrak{g}}=(Z^{ij})^{\mathfrak{g}}$ and that $B^{ij}_{\mathfrak{g}}= (B^{ij})^{\mathfrak{g}}$. By definition, the $E_1$-term of the equivariant basic Cartan complex is given by 
 \[ E_1^{ij}=Z^{ij}_{\mathfrak{g}}/B^{ij}_{\mathfrak{g}}= (Z^{ij})^{\mathfrak{g}}/(B^{ij})^{\mathfrak{g}}.\]
 The equivariant transverse averaging operator (\ref{e-t-a-operator}), together with the short exact sequence
\[ 0\hookrightarrow B^{\bullet\bullet} \rightarrow Z^{\bullet\bullet}\rightarrow H^{\bullet\bullet}(C, d)\rightarrow 0,\]
gives rise to a short exact sequence
\[ 0\hookrightarrow (B^{\bullet\bullet})^{\mathfrak{g}}\rightarrow (Z^{\bullet\bullet})^{\mathfrak{g}}\rightarrow \left( H^{\bullet\bullet}(C, d)\right)^{\mathfrak{g}}\rightarrow 0.\]
Note that $H^{\bullet\bullet}(C, d)=S^\bullet\mathfrak{g}^*\otimes H^\bullet(M,\mathcal{F})$, and that the Cartan formula implies that
$\mathfrak{g}$ acts on $H^\bullet(M,\mathcal{F})$ trivially. This gives
\[ E_1^{\bullet\bullet} = \left(H^{\bullet\bullet}(C, d)\right)^{\mathfrak{g}}
\cong \left(S^\bullet\mathfrak{g}^* \otimes H^{\bullet}(M,
\mathcal{F})\right)^{\mathfrak{g}}= \left(S^\bullet\mathfrak{g}^*\right)^{\mathfrak{g}}\otimes H^\bullet(M, \mathcal{F}),\]
completing the proof.

\end{proof}

\section{Basic cohomology of Lie foliations of compact type}
\label{App:Lie-foliation}

This section proves Theorem~\ref{thm:liefoliation}, which identifies the
basic cohomology of a complete Lie $\mathfrak g$-foliation of compact type
with the Lie algebra cohomology $H^\bullet(\mathfrak g)$. We begin by
recalling the definition of a Lie foliation, following Molino~\cite{Mo88}.

\begin{defn}\label{lie-g-foliation}
A \emph{transverse parallelism} of $(M,\mathcal F)$ by $\mathfrak g$ is a
$\mathfrak g$-valued $1$-form
$\alpha\in\Omega^1(M,\mathfrak g)$ satisfying:
\begin{enumerate}
\item[(i)] for every $x\in M$, the map
$\alpha_x:T_xM\to\mathfrak g$ is surjective;
\item[(ii)] $\ker\alpha_x=T_x\mathcal F$;
\item[(iii)] the Maurer--Cartan equation
\begin{equation}\label{MC-eq}
d\alpha+\tfrac12[\alpha,\alpha]=0
\end{equation}
holds.
\end{enumerate}
The form $\alpha$ is called the \emph{Maurer--Cartan form}. A foliation
equipped with such a form is called a \emph{Lie $\mathfrak g$-foliation},
or simply a \emph{Lie foliation} with structural Lie algebra $\mathfrak g$.
\end{defn}

Let $(M,\mathcal F)$ be a complete Lie $\mathfrak g$-foliation with
Maurer--Cartan form $\alpha$. Since $\alpha$ identifies the normal bundle
$N\mathcal F$ with the trivial bundle $M\times\mathfrak g$, for each
$\xi\in\mathfrak g$ there is a unique transverse vector field
$\overline X_\xi$ satisfying
\[
\alpha(\overline X_\xi)=\xi.
\]
The Maurer--Cartan equation gives
\[
[\overline X_\xi,\overline X_\eta]
=
\overline X_{[\xi,\eta]}.
\]
Consequently,
\begin{equation}\label{canonical-action}
\rho:\mathfrak g\longrightarrow\mathfrak X(M,\mathcal F),
\qquad
\xi\longmapsto\overline X_\xi,
\end{equation}
is a Lie algebra homomorphism and hence defines a canonical transverse
$\mathfrak g$-action.

An inner product $\langle\cdot,\cdot\rangle$ on $\mathfrak g$ induces a
transverse Riemannian metric by
\begin{equation}\label{in-metric}
g(u,v)
:=
\langle\alpha(u),\alpha(v)\rangle,
\qquad
u,v\in N_x\mathcal F.
\end{equation}

\begin{lem}\label{isometric-action}
If the inner product in \eqref{in-metric} is
$\operatorname{ad}$-invariant, then the canonical transverse
$\mathfrak g$-action \eqref{canonical-action} is isometric.
\end{lem}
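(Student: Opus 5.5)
To prove that the canonical action is isometric, I will show that for each $\xi\in\mathfrak g$ one has $\mathcal L_{\overline X_\xi}g=0$. Concretely, choose a foliate representative $X_\xi$ of $\overline X_\xi$; it satisfies $\alpha(X_\xi)=\xi$, a constant function. Since $g=\langle\alpha,\alpha\rangle$ on $N\mathcal F$, I evaluate on two foliate vector fields $Y,Z$ and use the Leibniz rule for the Lie derivative:
\[
(\mathcal L_{X_\xi}g)(Y,Z)=\langle(\mathcal L_{X_\xi}\alpha)(Y),\alpha(Z)\rangle+\langle\alpha(Y),(\mathcal L_{X_\xi}\alpha)(Z)\rangle .
\]
It suffices to treat foliate $Y,Z$, because the normal bundle is locally spanned by classes of foliate fields. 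The expression is tensorial, and $g$ vanishes on $T\mathcal F$ as a bilinear form on $TM$.

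The key step is to compute $\mathcal L_{X_\xi}\alpha$ via Cartan's formula: $\mathcal L_{X_\xi}\alpha=d\iota_{X_\xi}\alpha+\iota_{X_\xi}d\alpha$. The first term vanishes because $\iota_{X_\xi}\alpha=\xi$ is constant. For the second term, the Maurer--Cartan equation \eqref{MC-eq} gives
\[
\iota_{X_\xi}d\alpha=-\tfrac12\iota_{X_\xi}[\alpha,\alpha]=-[\xi,\alpha].
\]
This uses $\tfrac12[\alpha,\alpha](U,V)=[\alpha(U),\alpha(V)]$. Hence
\[
\mathcal L_{X_\xi}\alpha=-\operatorname{ad}(\xi)\circ\alpha .
\]
I expect this sign and normalization bookkeeping to be the only delicate point. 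The exact sign is irrelevant anyway, since only the skew-adjointness of $\operatorname{ad}(\xi)$ will be used.

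Substituting into the Leibniz formula gives
\[
(\mathcal L_{X_\xi}g)(Y,Z)=-\bigl(\langle[\xi,\alpha(Y)],\alpha(Z)\rangle+\langle\alpha(Y),[\xi,\alpha(Z)]\rangle\bigr).
\]
By $\operatorname{ad}$-invariance of the inner product this vanishes identically. Therefore $\overline X_\xi\in\mathrm{iso}(M,\mathcal F)$ for all $\xi$, and $\rho$ takes values in the transverse Killing algebra, which is exactly the isometric condition of Definition~\ref{transverse-action}.

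Finally, I will note that $g$ is indeed a transverse metric. It is positive definite on $N\mathcal F$ by conditions (i) and (ii). It is leafwise invariant because for $V\in\mathfrak X(\mathcal F)$ we have $\iota_V\alpha=0$, and Maurer--Cartan gives $\iota_Vd\alpha=-[\alpha(V),\alpha]=0$. Thus $\mathcal L_V\alpha=0$, and hence $\mathcal L_Vg=0$.
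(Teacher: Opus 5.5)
Your proof is correct and is essentially the paper's argument in dual (coframe) form. The paper evaluates $\mathcal L_{\overline X}g$ on the global orthonormal frame $\overline X_i=\rho(e_i)$ and uses $[\overline X,\overline X_i]=\rho([\xi,e_i])$. You instead derive the equivalent identity $\mathcal L_{X_\xi}\alpha=-\operatorname{ad}(\xi)\circ\alpha$ directly from Cartan's formula and the Maurer--Cartan equation, and both arguments conclude by $\operatorname{ad}$-invariance; your additional check that $\langle\alpha,\alpha\rangle$ is leafwise invariant, so that $g$ really is a transverse metric, is a step the paper takes for granted.
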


\begin{proof}
Let $e_1,\dots,e_q$ be an orthonormal basis of $\mathfrak g$, and set
$\overline X_i=\rho(e_i)$. The transverse vector fields
$\overline X_1,\dots,\overline X_q$ form a global orthonormal frame of
$N\mathcal F$.

For $\xi\in\mathfrak g$, set $\overline X=\rho(\xi)$. Since
$\rho$ is a Lie algebra homomorphism,
\[
[\overline X,\overline X_i]=\rho([\xi,e_i]).
\]
It follows that
\[
\begin{split}
(\mathcal L_{\overline X}g)(\overline X_i,\overline X_j)
&=
-g([\overline X,\overline X_i],\overline X_j)
-g(\overline X_i,[\overline X,\overline X_j])\\
&=
-\langle[\xi,e_i],e_j\rangle
-\langle e_i,[\xi,e_j]\rangle
=0.
\end{split}
\]
Thus $\mathcal L_{\overline X}g=0$ for every $\xi\in\mathfrak g$.
\end{proof}

\subsection{A direct transverse averaging operator}

We now assume the hypotheses of Theorem~\ref{thm:liefoliation}: $\mathcal F$ is a complete Lie-$\mathfrak g$ foliation on a connected manifold $M$, $\mathfrak g$ is of compact type, and the leaf closure space
$
B:=M/\overline{\mathcal F}$
is compact. By the structure theorem for complete Lie foliations, the leaf
closures form a regular foliation. Consequently, $B$ is a compact connected
manifold, and the quotient map
$
p:M\longrightarrow B$
is a smooth submersion.

The transverse vector fields $\overline X_\xi$ preserve the leaf closures
and therefore project to vector fields $\xi_B$ on $B$.

\begin{lem}\label{coefficient-complex}
The following statements hold.
\begin{itemize}
\item[(a)] For every $k$, the assignment
\begin{equation}\label{coefficient-map}
F_\beta(p(x))(\xi_1,\dots,\xi_k)
:=
\beta_x(\overline X_{\xi_1},\dots,\overline X_{\xi_k})
\end{equation}
defines an isomorphism
\begin{equation}\label{coefficient-isomorphism}
\mathcal I_k:
\Omega^k(M,\mathcal F)
\xrightarrow{\;\cong\;}
C^\infty(B,\wedge^k\mathfrak g^*),
\qquad
\beta\longmapsto F_\beta.
\end{equation}

\item[(b)] Under \eqref{coefficient-isomorphism}, the exterior differential
corresponds to the operator
$
D:C^\infty(B,\wedge^k\mathfrak g^*)
\longrightarrow
C^\infty(B,\wedge^{k+1}\mathfrak g^*)$
given by
\begin{align}
(DF)(\xi_0,\dots,\xi_k)
={}&
\sum_{i=0}^k(-1)^i(\xi_i)_B
\bigl(
F(\xi_0,\dots,\widehat{\xi_i},\dots,\xi_k)
\bigr)
\nonumber\\
&+
\sum_{0\leq i<j\leq k}(-1)^{i+j}
F\bigl(
[\xi_i,\xi_j],
\xi_0,\dots,\widehat{\xi_i},\dots,
\widehat{\xi_j},\dots,\xi_k
\bigr).
\label{coefficient-differential}
\end{align}

\item[(c)] For every $\xi\in\mathfrak g$ and
$\beta\in\Omega^k(M,\mathcal F)$,
\begin{equation}\label{coefficient-lie-derivative}
F_{\mathcal L_{\rho(\xi)}\beta}
=
\xi_BF_\beta+\operatorname{ad}^*(\xi)F_\beta,
\end{equation}
where $\xi_BF_\beta$ denotes componentwise differentiation of the
$\wedge^k\mathfrak g^*$-valued function $F_\beta$ along $\xi_B$.
\end{itemize}
\end{lem}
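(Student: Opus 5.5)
The plan is to work entirely in the global transverse frame $\overline X_{e_1},\dots,\overline X_{e_q}$ furnished by the Maurer--Cartan form, where $e_1,\dots,e_q$ is a basis of $\mathfrak g$ with dual basis $e^1,\dots,e^q$. Write $\alpha=\sum_i\alpha^i e_i$. The components $\alpha^i$ are basic $1$-forms, since they vanish on $T\mathcal F$ and $d\alpha^i=-\tfrac12\sum c^i_{jk}\alpha^j\wedge\alpha^k$ also vanishes on $T\mathcal F$. They form a global coframe of $N^*\mathcal F$, dual to the $\overline X_{e_i}$. Consequently every $\beta\in\Omega^k(M,\mathcal F)$ can be written uniquely as
\[
\beta=\sum_{I} f_I\,\alpha^{i_1}\wedge\cdots\wedge\alpha^{i_k},
\qquad f_I=\beta(\overline X_{e_{i_1}},\dots,\overline X_{e_{i_k}}).
\]
Each $f_I$ is a basic function, because $\beta$ and the $\overline X_{e_i}$ are foliate. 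By the standard fact that basic functions of a Riemannian foliation are constant on leaf closures, and since $p:M\to B$ is a submersion whose fibres are the leaf closures, every $f_I$ factors uniquely as $f_I=\bar f_I\circ p$ with $\bar f_I\in C^\infty(B)$. We set $F_\beta(b)=\sum_I \bar f_I(b)\,e^{i_1}\wedge\cdots\wedge e^{i_k}$, which agrees with \eqref{coefficient-map}. This map is injective. For surjectivity, given $F$ we take $\beta=\sum_I (F_I\circ p)\,\alpha^{I}$; this form is basic, because its coefficients are basic and the $\alpha^i$ are basic. That proves (a). The one step needing care is smoothness of $\bar f_I$, and this follows from $p$ being a surjective submersion.

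For (b), compute $d\beta$ on $(\overline X_{\xi_0},\dots,\overline X_{\xi_k})$ with the invariant (Koszul) formula for the exterior derivative. The formula is legitimate here because $d\beta$ is basic, so it may be evaluated on foliate representatives. The derivative terms give $\overline X_{\xi_i}\bigl(\beta(\dots)\bigr)$. Since $\beta(\overline X_{\xi_0},\dots)=F_\beta(\dots)\circ p$ and $\overline X_{\xi_i}$ is $p$-related to $(\xi_i)_B$, these terms equal $(\xi_i)_B F_\beta(\dots)\circ p$. The bracket terms use $[\overline X_{\xi_i},\overline X_{\xi_j}]=\overline X_{[\xi_i,\xi_j]}$ and so reproduce the Chevalley--Eilenberg part of \eqref{coefficient-differential}.

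For (c), use the Leibniz rule
\[
(\mathcal L_{\overline X_\xi}\beta)(\overline X_{\xi_1},\dots,\overline X_{\xi_k})
=\overline X_\xi\bigl(\beta(\dots)\bigr)-\sum_i\beta(\dots,[\overline X_\xi,\overline X_{\xi_i}],\dots),
\]
then substitute $[\overline X_\xi,\overline X_{\xi_i}]=\overline X_{[\xi,\xi_i]}$ and compare with the coadjoint formula for $\operatorname{ad}^*(\xi)$. Alternatively, (c) follows from (b) together with the Cartan formula $\mathcal L=d\iota+\iota d$ and the identity $\operatorname{ad}^*=d_{CE}\iota+\iota d_{CE}$, since contraction with $\overline X_\xi$ corresponds to $\iota_\xi$ on coefficients.

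I expect the main obstacle to be the well-definedness in (a): showing rigorously that the coefficient functions descend to $B$ and that $\xi_B$ is a well-defined smooth vector field $p$-related to $\overline X_\xi$. For the latter, a foliate representative maps leaves to leaves, hence by continuity maps leaf closures to leaf closures, so its flow descends to $B$. The remaining calculations are routine.
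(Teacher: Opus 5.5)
Your proposal is correct and follows essentially the same route as the paper. The paper also descends the basic functions $\beta(\overline X_{\xi_1},\dots,\overline X_{\xi_k})$ to $B$, inverts via $F\mapsto F(p(\cdot))(\alpha(\cdot),\dots,\alpha(\cdot))$ (your expansion $\sum_I (F_I\circ p)\,\alpha^I$ is this same map), and gets (b) and (c) from the invariant formulas for $d$ and $\mathcal L$ on foliate representatives together with $[\overline X_\xi,\overline X_\eta]=\overline X_{[\xi,\eta]}$; your remarks on smoothness via the submersion $p$ and the alternative Cartan-formula derivation of (c) are valid details that the paper leaves implicit.
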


\begin{proof}
For fixed $\xi_1,\dots,\xi_k$, the right-hand side of
\eqref{coefficient-map} is a basic function on $M$ and is therefore
constant on each leaf closure. Thus $F_\beta$ is well defined.

Conversely, if
$F\in C^\infty(B,\wedge^k\mathfrak g^*)$, define
\[
(\mathcal I_k^{-1}F)_x(v_1,\dots,v_k)
=
F(p(x))
\bigl(
\alpha_x(v_1),\dots,\alpha_x(v_k)
\bigr).
\]
This is a basic form, and the two constructions are inverse to each other.
This proves~(a).

Part~(b) follows from the formula for the exterior differential and the
identity
\[
[\overline X_\xi,\overline X_\eta]
=
\overline X_{[\xi,\eta]}.
\]
Indeed, evaluating $d\beta$ on
$\overline X_{\xi_0},\dots,\overline X_{\xi_k}$ gives precisely
\eqref{coefficient-differential}.

For~(c), fix $\xi,\xi_1,\dots,\xi_k\in\mathfrak g$, and choose foliate
representatives $X$ and $Y_i$ of $\overline X_\xi$ and
$\overline X_{\xi_i}$, respectively. The function
$\beta(Y_1,\dots,Y_k)$ is basic and descends to
$F_\beta(\xi_1,\dots,\xi_k)$ on $B$. Moreover, $X$ projects to $\xi_B$,
while the transverse class of $[X,Y_i]$ is
$\overline X_{[\xi,\xi_i]}$. Using the Leibniz rule for the Lie derivative,
we obtain
\begin{align*}
F_{\mathcal L_{\rho(\xi)}\beta}(\xi_1,\dots,\xi_k)
&=
X\bigl(\beta(Y_1,\dots,Y_k)\bigr)
-\sum_{i=1}^k
\beta(Y_1,\dots,[X,Y_i],\dots,Y_k)\\
&=
\xi_B\bigl(F_\beta(\xi_1,\dots,\xi_k)\bigr)
-\sum_{i=1}^k
F_\beta(\xi_1,\dots,[\xi,\xi_i],\dots,\xi_k)\\
&=
\bigl(\xi_BF_\beta+\operatorname{ad}^*(\xi)F_\beta\bigr)
(\xi_1,\dots,\xi_k).
\end{align*}
This proves~(c).
\end{proof}
Choose an $\operatorname{ad}$-invariant positive-definite inner product on
$\mathfrak g$. By Lemma~\ref{isometric-action}, the projected vector fields
$\xi_B$ are Killing vector fields. Since $B$ is compact, they are complete
and integrate to a right isometric action of the simply connected Lie group
$\widetilde G$ with Lie algebra $\mathfrak g$. We write this action as
\[
(b,a)\longmapsto b\cdot a
\]
and set $R_a(b)=b\cdot a$. With this convention,
\begin{equation}\label{projected-transformation}
(R_a)_*(\xi_B)
=
(\operatorname{Ad}_{a^{-1}}\xi)_B.
\end{equation}

The inner product on $\mathfrak g$ is
$\operatorname{Ad}$-invariant, so
$\operatorname{Ad}(\widetilde G)\subset O(\mathfrak g)$. The map
\[
\jmath:\widetilde G
\longrightarrow
\operatorname{Iso}(B)\times O(\mathfrak g),
\qquad
\jmath(a):=(R_{a^{-1}},\operatorname{Ad}_a),
\]
is a group homomorphism. Define
\begin{equation}\label{graph-closure}
\widehat N
:=
\overline{\jmath(\widetilde G)}
\subset
\operatorname{Iso}(B)\times O(\mathfrak g).
\end{equation}
Then $\widehat N$ is a compact connected Lie group. We take the closure of the graph rather than merely the closure of the image
of $\widetilde G$ in $\operatorname{Iso}(B)$. Indeed, an element of $\widetilde G$ acting
trivially on $B$ need not act trivially on $\mathfrak g$ through the adjoint
representation. Thus the induced isometry of $B$ does not, by itself,
determine the corresponding action on the coefficient space
$\wedge^\bullet\mathfrak g^*$.

Every $(\phi,A)\in\widehat N$ satisfies
\begin{equation}\label{graph-relations}
A\in\overline{\operatorname{Ad}(\widetilde G)}
\subset\operatorname{Aut}(\mathfrak g)\cap O(\mathfrak g),
\qquad
\phi_*(\xi_B)=(A\xi)_B.
\end{equation}
Indeed, these relations hold on the dense subgroup
$\jmath(\widetilde G)$ and hence extend to its closure.

For $n=(\phi,A)\in\widehat N$ and
$F\in C^\infty(B,\wedge^\bullet\mathfrak g^*)$, define
\begin{equation}\label{graph-action}
(n\cdot F)(b)
:=
(A^{-1})^*F\bigl(\phi^{-1}(b)\bigr).
\end{equation}

\begin{lem}\label{graph-action-lemma}
The action \eqref{graph-action} commutes with $D$. Under
\eqref{coefficient-isomorphism}, its restriction to
$\jmath(\widetilde G)$ integrates the canonical transverse
$\mathfrak g$-action.
\end{lem}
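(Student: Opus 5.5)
The plan is to verify both assertions by direct computation in the coefficient model of Lemma~\ref{coefficient-complex}. We decompose $D = D_1 + D_2$, where $D_1$ is the derivative term $\sum_i(-1)^i(\xi_i)_B(F(\dots\widehat{\xi_i}\dots))$ and $D_2$ is the Chevalley--Eilenberg term, i.e.\ $d_{CE}$ applied pointwise on $B$. We then check that each $n=(\phi,A)\in\widehat N$ commutes with $D_1$ and with $D_2$ separately, using only the two relations in \eqref{graph-relations}: $A$ is a Lie algebra automorphism, and $\phi_*(\xi_B)=(A\xi)_B$.

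For $D_2$ the argument is purely algebraic. The map $(A^{-1})^*$ on $\wedge^\bullet\mathfrak g^*$ is induced by the automorphism $A^{-1}$, so it commutes with $d_{CE}$. Precomposition with $\phi^{-1}$ acts only on the base variable, so it commutes as well.

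For $D_1$ we evaluate $(n\cdot F)(b)$ on $(\xi_0,\dots,\widehat{\xi_i},\dots)$. By definition this equals $F(\phi^{-1}b)(A^{-1}\xi_0,\dots)$. We then differentiate along $(\xi_i)_B$ at $b$. Since $\phi^{-1}_*(\xi_B)=(A^{-1}\xi)_B$, the chain rule gives
\[
(\xi_i)_B\bigl[F(\cdot)(A^{-1}\cdots)\circ\phi^{-1}\bigr](b)=\bigl((A^{-1}\xi_i)_B F(\cdot)(A^{-1}\cdots)\bigr)(\phi^{-1}b).
\]
This is exactly the corresponding term of $(DF)(\phi^{-1}b)$ evaluated on $A^{-1}\xi_0,\dots,A^{-1}\xi_k$. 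Summing over $i$ yields $D_1(n\cdot F)=n\cdot D_1F$.

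Before this, we record that \eqref{graph-action} really is an action, by checking $(n_1n_2)\cdot F=n_1\cdot(n_2\cdot F)$ from the product structure in $\operatorname{Iso}(B)\times O(\mathfrak g)$. We also confirm the relations \eqref{graph-relations} on $\jmath(\widetilde G)$ using \eqref{projected-transformation}: for $\phi=R_{a^{-1}}$ it gives $\phi_*\xi_B=(\operatorname{Ad}_a\xi)_B$. Continuity then extends them to the closure, since $\phi\mapsto\phi_*\xi_B$ is continuous in the compact-open $C^1$ sense on a compact isometry group.

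For the integration statement, we take $a=\exp(t\xi)$, so that $n_t=\jmath(a)=(R_{\exp(-t\xi)},\operatorname{Ad}_{\exp t\xi})$. Then
\[
(n_t\cdot F)(b)=(\operatorname{Ad}_{\exp(-t\xi)})^*F(b\cdot\exp(t\xi)).
\]
We differentiate at $t=0$. The curve $b\cdot\exp(t\xi)$ has initial velocity $\xi_B$, because the right action integrates the fields $\xi_B$. The derivative of $(\operatorname{Ad}_{\exp(-t\xi)})^*$ on $\wedge^k\mathfrak g^*$ is $\eta\mapsto\sum\eta(\dots,[\xi,\cdot],\dots)$, which with the sign convention of the paper equals $-\operatorname{ad}^*(\xi)$ up to orientation. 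Since the statement requires the derivative to be exactly $\xi_BF+\operatorname{ad}^*(\xi)F=F_{\mathcal L_{\rho(\xi)}\beta}$ from \eqref{coefficient-lie-derivative}, up to the overall sign convention relating a right action to its infinitesimal generator, the signs have to be tracked carefully here. The one-parameter groups $n_t$ then integrate $\mathcal L_{\rho(\xi)}$ under $\mathcal I_k$, which is what "integrates the canonical transverse $\mathfrak g$-action" means.

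The main obstacle is this sign bookkeeping. The convention $R_{a^{-1}}$ paired with $\operatorname{Ad}_a$ was chosen precisely to make $\jmath$ a homomorphism, so the base term and the coefficient term must both come out with the matching sign, possibly after replacing $t$ by $-t$ consistently in both. I would fix the sign by checking a single explicit degree-one example. Everything else is formal.
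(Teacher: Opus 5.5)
Your proof of the first assertion is correct, and it takes a different route from the paper. The paper argues infinitesimally. It computes the generator of $t\mapsto\jmath(\exp t\xi)\cdot F$ and identifies it, via Lemma~\ref{coefficient-complex}(c), with $\mathcal L_{\rho(\xi)}$, which commutes with $d$. It then passes to $\jmath(\widetilde G)$ through one-parameter subgroups, and to $\widehat N$ by density and continuity. You instead verify $D(n\cdot F)=n\cdot DF$ directly for every $n=(\phi,A)\in\widehat N$, using only \eqref{graph-relations}. Your chain-rule step is right: applying $\phi_*\eta_B=(A\eta)_B$ to $\eta=A^{-1}\xi$ gives $(\phi^{-1})_*\xi_B=(A^{-1}\xi)_B$. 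Also, $(A^{-1})^*$ commutes with $d_{CE}$ because $A\in\operatorname{Aut}(\mathfrak g)$. Your version avoids the tacit step from ``the generator commutes with $D$'' to ``the flow commutes with $D$'', and it needs density only to establish \eqref{graph-relations}. The paper's version has the advantage that commutation and the integration statement come out of a single computation.

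The gap is in the integration statement. You claim that the derivative of $(\operatorname{Ad}_{\exp(-t\xi)})^*$ at $t=0$ is $\tau\mapsto\sum\tau(\dots,[\xi,\cdot\,],\dots)$, i.e.\ $-\operatorname{ad}^*(\xi)$. You then defer the resulting mismatch, suggesting it might be absorbed by replacing $t$ with $-t$ in both terms. The sign is wrong, and the proposed repair could not work anyway. Reversing $t$ flips both $\xi_BF$ and the coefficient term, so it cannot cure a relative sign between them. Had your sign been right, the generator would be $\xi_BF-\operatorname{ad}^*(\xi)F\neq F_{\mathcal L_{\rho(\xi)}\beta}$, and the lemma would fail. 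The correct computation uses $\frac{d}{dt}\big|_{t=0}\operatorname{Ad}_{\exp(-t\xi)}\zeta=-[\xi,\zeta]$:
\[
\frac{d}{dt}\Big|_{t=0}\tau\bigl(\operatorname{Ad}_{\exp(-t\xi)}\zeta_1,\dots,\operatorname{Ad}_{\exp(-t\xi)}\zeta_k\bigr)
=-\sum_{i=1}^k\tau(\zeta_1,\dots,[\xi,\zeta_i],\dots,\zeta_k)
=(\operatorname{ad}^*(\xi)\tau)(\zeta_1,\dots,\zeta_k),
\]
which is exactly the paper's convention for $\operatorname{ad}^*$. Together with $\frac{d}{dt}\big|_{t=0}F(b\cdot\exp(t\xi))=(\xi_BF)(b)$, the generator is $\xi_BF+\operatorname{ad}^*(\xi)F$. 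This equals $F_{\mathcal L_{\rho(\xi)}\beta}$ by \eqref{coefficient-lie-derivative}. No further convention-fixing is needed, since the pairing $(R_{a^{-1}},\operatorname{Ad}_a)$ in $\jmath$ already produces matching signs. With this line supplied, your argument is complete.
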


\begin{proof}
For $a\in\widetilde G$, the action is given by
\[
(\jmath(a)\cdot F)(b)
=
(\operatorname{Ad}_{a^{-1}})^*F(b\cdot a).
\]
Fix $\xi\in\mathfrak g$, and let
$\exp:\mathfrak g\to\widetilde G$ denote the exponential map. Differentiating
the action of the one-parameter subgroup $t\mapsto\exp(t\xi)$ gives
\[
\left.\frac{d}{dt}\right|_{t=0}
\jmath(\exp(t\xi))\cdot F
=
\xi_BF+\operatorname{ad}^*(\xi)F.
\]
By Lemma~\ref{coefficient-complex}(c), this infinitesimal action corresponds
under \eqref{coefficient-isomorphism} to
$\mathcal L_{\rho(\xi)}$ on basic forms. It therefore commutes with the
exterior differential. Since $\widetilde G$ is connected and generated by
its one-parameter subgroups, the action of $\jmath(\widetilde G)$ commutes
with $D$ and integrates the canonical transverse $\mathfrak g$-action. The
same conclusion holds for $\widehat N$ by density and continuity.
\end{proof}

Using \eqref{coefficient-isomorphism}, define the corresponding action on
basic forms by
\[
n\cdot\beta
:=
\mathcal I^{-1}(n\cdot F_\beta).
\]

\begin{prop}\label{direct-lie-averaging}
Define
\begin{equation}\label{direct-average}
\mathcal A_{\mathrm{Lie}}(\beta)
:=
\int_{\widehat N}n\cdot\beta\,dn,
\end{equation}
where $dn$ is the normalized Haar measure on $\widehat N$. Then
\[
\mathcal A_{\mathrm{Lie}}:
\Omega^\bullet(M,\mathcal F)
\longrightarrow
\Omega^\bullet(M,\mathcal F)^{\mathfrak g}
\]
is a cochain projection. Moreover, if $\beta$ is closed, then
\[
[\mathcal A_{\mathrm{Lie}}(\beta)]
=
[\beta]
\quad\text{in }H^\bullet(M,\mathcal F).
\]
\end{prop}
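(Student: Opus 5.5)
The proof works entirely in the coefficient model $C^\infty(B,\wedge^\bullet\mathfrak g^*)$ given by Lemma~\ref{coefficient-complex}(a). There $\mathcal A_{\mathrm{Lie}}$ becomes $F\mapsto\int_{\widehat N}n\cdot F\,dn$. We check, in order: well-definedness and the cochain property; invariance of the image; the projection property; and preservation of cohomology classes. Only the last step is substantive.

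For well-definedness, note that the action \eqref{graph-action} is continuous in $n$ for the $C^\infty$ topology, because $\widehat N$ acts smoothly on the compact manifold $B$ and linearly on $\wedge^\bullet\mathfrak g^*$. Hence the Haar integral of $n\cdot F$ is again a smooth $\wedge^\bullet\mathfrak g^*$-valued function on $B$. By Lemma~\ref{graph-action-lemma}, each $n$ commutes with $D$. Since $D$ is a continuous differential operator, it passes under the integral, so $\mathcal A_{\mathrm{Lie}}$ is a cochain map. Left invariance of $dn$ gives $m\cdot\mathcal A_{\mathrm{Lie}}(\beta)=\mathcal A_{\mathrm{Lie}}(\beta)$ for all $m\in\widehat N$. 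In particular this holds on $\jmath(\exp t\xi)$, and differentiating at $t=0$ and using Lemma~\ref{graph-action-lemma} together with Lemma~\ref{coefficient-complex}(c) gives $\mathcal L_{\rho(\xi)}\mathcal A_{\mathrm{Lie}}(\beta)=0$. So the image lies in $\Omega^\bullet(M,\mathcal F)^{\mathfrak g}$.

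For the projection property, let $\beta$ be $\mathfrak g$-invariant. Then $\frac{d}{dt}\,\jmath(\exp t\xi)\cdot\beta=\jmath(\exp t\xi)\cdot\mathcal L_{\rho(\xi)}\beta=0$, so $\beta$ is fixed by every one-parameter subgroup of $\jmath(\widetilde G)$. Since $\widetilde G$ is connected, it is fixed by all of $\jmath(\widetilde G)$, and by density and continuity it is fixed by all of $\widehat N$. Therefore $\mathcal A_{\mathrm{Lie}}(\beta)=\beta$, and so $\mathcal A_{\mathrm{Lie}}^2=\mathcal A_{\mathrm{Lie}}$.

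The main step is showing $[\mathcal A_{\mathrm{Lie}}(\beta)]=[\beta]$ for closed $\beta$, and I would mirror the proof of Theorem~\ref{average-operator}.

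First, take $n=\jmath(\exp t\xi)$. Because the flow of $\rho(\xi)$ acts on closed basic forms through $\mathcal L_{\rho(\xi)}=d\iota_{\rho(\xi)}$, we get
\[
n\cdot\beta-\beta=d\int_0^t \jmath(\exp s\xi)\cdot\iota_{\rho(\xi)}\beta\,ds .
\]
Here $\iota_{\rho(\xi)}\beta$ is basic, so the primitive lies in $\Omega^\bullet(M,\mathcal F)$. This is the analogue of Lemma~\ref{chain-homotopy}, and by connectedness it gives $[n\cdot\beta]=[\beta]$ for all $n\in\jmath(\widetilde G)$.

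Next, to pass to the closure $\widehat N$, fix a closed $\gamma\in\Omega_c(M,L,\mathcal F)$ and consider $n\mapsto\int_{M/\mathcal F}(n\cdot\beta)\wedge\gamma$. This function is continuous, since $n\cdot F_\beta$ varies uniformly on the compact $B$ and hence $n\cdot\beta$ varies uniformly on $\operatorname{supp}\gamma$; the transverse integral is continuous under such convergence. It is constant on the dense subgroup $\jmath(\widetilde G)$, hence constant on $\widehat N$.

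Finally, Fubini gives $\int_{M/\mathcal F}\mathcal A_{\mathrm{Lie}}(\beta)\wedge\gamma=\int_{M/\mathcal F}\beta\wedge\gamma$ for every such $\gamma$. Theorem~\ref{Ser}, applicable because a complete Lie foliation is Riemannian and complete, then yields $[\mathcal A_{\mathrm{Lie}}(\beta)]=[\beta]$.

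The delicate point I expect is the continuity and Fubini step. It needs the uniform convergence of $n\cdot F_\beta$ to be transferred to convergence of transverse integrals. The cleanest route is to observe that $(n\cdot\beta)\wedge\gamma$ is obtained from the fixed compactly supported data $\gamma$ and $\alpha$ by contracting against coefficients depending continuously on $n$, so the integral is a continuous function of $n$, and Fubini applies to this jointly continuous integrand on $\widehat N\times W$.
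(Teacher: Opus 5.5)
Your proposal is correct and follows essentially the same route as the paper. Both arguments obtain the cochain property from Lemma~\ref{graph-action-lemma}, invariance of the image from left invariance of Haar measure, and the projection property from density of $\jmath(\widetilde G)$ in $\widehat N$; both then use the Cartan homotopy along one-parameter subgroups, followed by continuity of the duality pairing, density, and Sergiescu's basic Poincar\'e duality. The only cosmetic difference is that you apply Fubini directly to $\mathcal A_{\mathrm{Lie}}(\beta)$, as in the proof of Theorem~\ref{average-operator}, whereas the paper first records $[n\cdot\beta]=[\beta]$ for every $n\in\widehat N$ and then averages.
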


\begin{proof}
By Lemma~\ref{graph-action-lemma}, every $n\in\widehat N$ acts by cochain
automorphisms, so $\mathcal A_{\mathrm{Lie}}$ commutes with $d$. For
$m\in\widehat N$, left invariance of the Haar measure gives
\[
m\cdot\mathcal A_{\mathrm{Lie}}(\beta)
=
\int_{\widehat N}(mn)\cdot\beta\,dn
=
\mathcal A_{\mathrm{Lie}}(\beta).
\]
Thus the image of $\mathcal A_{\mathrm{Lie}}$ is contained in the
$\widehat N$-invariant subcomplex. Conversely, every
$\mathfrak g$-invariant basic form is fixed by the connected group
$\jmath(\widetilde G)$ and hence, by continuity, by its closure
$\widehat N$. Therefore $\mathcal A_{\mathrm{Lie}}$ is a cochain projection
onto $\Omega^\bullet(M,\mathcal F)^{\mathfrak g}$.

The cohomological assertion follows by the same argument as in
Theorem~\ref{average-operator}. For completeness, if
$a=\exp(t\xi)$ and $\beta$ is closed, Cartan's formula gives
\[
\jmath(a)\cdot\beta-\beta
=
d\int_0^t
\jmath(\exp(s\xi))\cdot\iota(\xi)\beta\,ds.
\]
Since $\widetilde G$ is connected, every element of
$\jmath(\widetilde G)$ acts trivially on basic cohomology.

For a closed compactly supported twisted basic form $\gamma$, the function
\[
\widehat N\longrightarrow\mathbb R,
\qquad
n\longmapsto
\int_{M/\mathcal F}(n\cdot\beta)\wedge\gamma
\]
is continuous and constant on the dense subgroup
$\jmath(\widetilde G)$. It is therefore constant on $\widehat N$.
Sergiescu's basic Poincar\'e duality gives
$[n\cdot\beta]=[\beta]$ for every $n\in\widehat N$. Averaging over
$\widehat N$ proves the assertion.
\end{proof}

\subsection{The invariant subcomplex}

Since the vector fields $\xi_B$ span $TB$, the $\widetilde G$-action on
$B$ is transitive. Hence the action of $\widehat N$ on $B$ through its first
component,
$
(\phi,A)\cdot b:=\phi(b)$, 
is also transitive. Fix $b_0\in B$, and let
\[
\widehat K
:=
\{(\phi,A)\in\widehat N\mid\phi(b_0)=b_0\}
\]
be the stabilizer of $b_0$ under this action. The group $\widehat K$ acts on
$\wedge^\bullet\mathfrak g^*$ by
\[
(\phi,A)\cdot\tau=(A^{-1})^*\tau.
\]

\begin{lem}\label{invariant-coefficient-complex}
Evaluation at $b_0$ induces an isomorphism
\[
\operatorname{ev}_{b_0}:
C^\infty(B,\wedge^k\mathfrak g^*)^{\widehat N}
\xrightarrow{\;\cong\;}
(\wedge^k\mathfrak g^*)^{\widehat K}.
\]
For every $\widehat N$-invariant coefficient map $F$,
\[
\operatorname{ev}_{b_0}(DF)
=
-d_{CE}\operatorname{ev}_{b_0}(F).
\]
Consequently, in degree $k$, the map
$
\beta\longmapsto(-1)^kF_\beta(b_0)$ 
defines an isomorphism of cochain complexes
\[
\Omega^\bullet(M,\mathcal F)^{\mathfrak g}
\xrightarrow{\;\cong\;}
(\wedge^\bullet\mathfrak g^*)^{\widehat K}.
\]
\end{lem}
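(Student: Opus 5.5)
The plan is to prove the three assertions of Lemma~\ref{invariant-coefficient-complex} in order: evaluation is an isomorphism on invariants, the differential intertwining formula, and then the resulting complex isomorphism with its sign.

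\textbf{Step 1: evaluation is an isomorphism.}
For $F$ fixed by $\widehat N$, invariance under $(\phi,A)\in\widehat K$ gives
\[
F(b_0)=(A^{-1})^*F(\phi^{-1}(b_0))=(A^{-1})^*F(b_0),
\]
so $\operatorname{ev}_{b_0}$ lands in $(\wedge^k\mathfrak g^*)^{\widehat K}$.

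Injectivity follows from transitivity of $\widehat N$ on $B$. For $n=(\phi,A)$ we have $F(\phi(b_0))=(A^{-1})^*F(b_0)$, so $F$ is determined by its value at $b_0$.

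For surjectivity, take $\tau$ fixed by $\widehat K$ and set
\[
F(\phi(b_0)):=(A^{-1})^*\tau .
\]
This is well defined because two elements with the same $\phi(b_0)$ differ by an element of $\widehat K$. It is $\widehat N$-invariant by construction. It is smooth because $B\cong\widehat N/\widehat K$ as homogeneous manifolds of a compact Lie group, and $F$ is the function induced by the smooth map $n\mapsto (A^{-1})^*\tau$, which is right $\widehat K$-invariant.

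\textbf{Step 2: the differential formula.}
Invariance of $F$ under $\jmath(\widetilde G)$ gives, after differentiating, $\xi_BF+\operatorname{ad}^*(\xi)F=0$. This is the same calculation as in Lemma~\ref{graph-action-lemma}, with Lemma~\ref{coefficient-complex}(c). Hence
\[
(\xi_i)_B F(\dots)=-(\operatorname{ad}^*(\xi_i)F)(\dots).
\]
Substituting into \eqref{coefficient-differential} turns the first sum into
\[
\sum_i(-1)^i\sum_{j\ne i}F(\dots,[\xi_i,\xi_j],\dots).
\]
Moving $[\xi_i,\xi_j]$ to the front and pairing the terms $(i,j)$ and $(j,i)$ using antisymmetry of the bracket, this sum should equal $-2$ times the Chevalley--Eilenberg sum. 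Adding the second sum of \eqref{coefficient-differential} then gives $DF=-d_{CE}F$ pointwise, in particular at $b_0$.

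The main obstacle is this sign and combinatorial bookkeeping. I would check it first in degree $k=0$ and $k=1$ before trusting the general reindexing. In degree $0$ the formula says $\xi_B F=0$ for constant invariant $F$, which is consistent. In degree $1$ we get $DF(\xi_0,\xi_1)=-F([\xi_0,\xi_1])$, compared with $d_{CE}F(\xi_0,\xi_1)=-F([\xi_0,\xi_1])$. This degree-$1$ check must be reconciled with the sign convention, and I would adjust the sign claim accordingly if needed.

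\textbf{Step 3: the complex isomorphism.}
Compose the maps
\[
\Omega^\bullet(M,\mathcal F)^{\mathfrak g}\;\xrightarrow{\;\mathcal I\;}\;C^\infty(B,\wedge^\bullet\mathfrak g^*)^{\widehat N}\;\xrightarrow{\;\operatorname{ev}_{b_0}\;}\;(\wedge^\bullet\mathfrak g^*)^{\widehat K}.
\]
The first map lands in the $\widehat N$-invariants by Proposition~\ref{direct-lie-averaging}: $\mathfrak g$-invariant forms are exactly the $\widehat N$-fixed ones.

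Inserting the sign $(-1)^k$ in degree $k$ converts the relation $\operatorname{ev}(DF)=-d_{CE}\operatorname{ev}(F)$ into a genuine chain map. Indeed,
\[
(-1)^{k+1}\bigl(-d_{CE}F(b_0)\bigr)=d_{CE}\bigl((-1)^kF(b_0)\bigr).
\]
Bijectivity of the composite follows from Step~1 together with Lemma~\ref{coefficient-complex}(a).
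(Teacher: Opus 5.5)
Your proposal follows the paper's proof essentially step for step. Transitivity and $B\cong\widehat N/\widehat K$ give the evaluation isomorphism. Differentiating invariance under $\jmath(\widetilde G)$ gives $\xi_BF=-\operatorname{ad}^*(\xi)F$, which you substitute into \eqref{coefficient-differential}. The $(-1)^k$ twist then produces the chain map. Your general reindexing claim, that the first sum equals $-2\,d_{CE}F$, is correct.

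The one thing to fix is your degree-one sanity check. It is miscomputed, and it led you to hedge on a sign that is in fact right. Since $(\xi_BF)(\eta)=-(\operatorname{ad}^*(\xi)F)(\eta)=F([\xi,\eta])$, you get
\[
(DF)(\xi_0,\xi_1)=F([\xi_0,\xi_1])-F([\xi_1,\xi_0])-F([\xi_0,\xi_1])=F([\xi_0,\xi_1])=-(d_{CE}F)(\xi_0,\xi_1).
\]
So the check confirms $DF=-d_{CE}F$; you appear to have kept only the bracket term.

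In general degree, move $[\xi_i,\xi_j]$ to the front and omit $\xi_i,\xi_j$ from the remaining slots. The ordered pair $(i,j)$ in the first sum then contributes:
\begin{itemize}
\item $-(-1)^{i+j}F([\xi_i,\xi_j],\dots)$ if $i<j$;
\item $(-1)^{i+j}F([\xi_i,\xi_j],\dots)=-(-1)^{i+j}F([\xi_j,\xi_i],\dots)$ if $j<i$.
\end{itemize}
This is exactly the factor $-2$.

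So no sign adjustment should be made. If you switched to $DF=d_{CE}F$, the map $\beta\mapsto(-1)^kF_\beta(b_0)$ would no longer be a chain map.
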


\begin{proof}
Transitivity shows that an $\widehat N$-invariant coefficient map is
determined by its value at $b_0$, and this value must be fixed by
$\widehat K$.

Conversely, if
$\tau\in(\wedge^k\mathfrak g^*)^{\widehat K}$, define
\[
F_\tau(\phi(b_0))
:=
(A^{-1})^*\tau,
\qquad
(\phi,A)\in\widehat N.
\]
The $\widehat K$-invariance of $\tau$ makes this definition independent of
the choice of $(\phi,A)$. Since $B\cong\widehat N/\widehat K$, the resulting
map is smooth and gives the inverse of evaluation.

If $F$ is $\widehat N$-invariant, differentiation of its invariance under
$\jmath(\widetilde G)$ gives
\[
\xi_BF=-\operatorname{ad}^*(\xi)F.
\]
Substituting this identity into \eqref{coefficient-differential} yields
\[
(DF)(b_0)
=
-d_{CE}\bigl(F(b_0)\bigr).
\]
The factor $(-1)^k$ gives the stated cochain isomorphism.
\end{proof}

\begin{lem}\label{stabilizer-quasi-isomorphism}
The inclusion
\[
(\wedge^\bullet\mathfrak g^*)^{\widehat K}
\hookrightarrow
\wedge^\bullet\mathfrak g^*
\]
is a quasi-isomorphism.
\end{lem}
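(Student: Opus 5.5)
The plan is to average over the compact group $\widehat K$ directly on the finite-dimensional complex $(\wedge^\bullet\mathfrak g^*,d_{CE})$. This mirrors the classical Chevalley--Eilenberg argument, but with the closure $\overline{\operatorname{Ad}(\widetilde G)}$ in place of a compact group acting on a manifold. Because everything is finite-dimensional, the continuity issues that forced the use of Sergiescu duality in Theorem~\ref{average-operator} become trivial here.

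First I check that $\widehat K$ acts on $\wedge^\bullet\mathfrak g^*$ by cochain automorphisms. By \eqref{graph-relations}, every $(\phi,A)\in\widehat N$, and in particular every element of $\widehat K$, has $A\in\operatorname{Aut}(\mathfrak g)$. Since $d_{CE}$ is defined purely in terms of the bracket, $(A^{-1})^*$ commutes with $d_{CE}$. The group $\widehat K$ is a closed subgroup of the compact group $\widehat N$, so it is compact, though possibly disconnected. Let $dk$ be normalized Haar measure on $\widehat K$ and define
\[
P(\tau):=\int_{\widehat K}k\cdot\tau\,dk .
\]
This is a linear projection of $\wedge^\bullet\mathfrak g^*$ onto $(\wedge^\bullet\mathfrak g^*)^{\widehat K}$, and it commutes with $d_{CE}$. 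Writing $i$ for the inclusion, we have $P\circ i=\mathrm{id}$.

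Next, and this is the key step, I show that every $A\in\overline{\operatorname{Ad}(\widetilde G)}$ acts as the identity on $H^\bullet(\mathfrak g)$. For $A=\operatorname{Ad}_{\exp(t\xi)}$, we have $\frac{d}{dt}(\operatorname{Ad}_{\exp(-t\xi)})^*\tau=\pm(\operatorname{Ad}_{\exp(-t\xi)})^*\operatorname{ad}^*(\xi)\tau$. Combined with $\operatorname{ad}^*(\xi)=d_{CE}\iota_\xi+\iota_\xi d_{CE}$, this gives, for a cocycle $\tau$, that $(A^{-1})^*\tau-\tau$ is $d_{CE}$ of an explicit integral. Since $\widetilde G$ is generated by one-parameter subgroups, the conclusion holds for all of $\operatorname{Ad}(\widetilde G)$. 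The subset of $\operatorname{Aut}(\mathfrak g)$ consisting of automorphisms that induce the identity on the finite-dimensional space $H^\bullet(\mathfrak g)$ is closed: the induced map on cohomology depends continuously, indeed polynomially, on $A$, and the condition is equality with the identity. Hence this subset contains the closure $\overline{\operatorname{Ad}(\widetilde G)}$, which contains the $A$-components of all elements of $\widehat K$ by \eqref{graph-relations}.

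Finally, I show that $i\circ P$ induces the identity on $H^\bullet(\mathfrak g)$. Let $\tau$ be a cocycle. Each $k\cdot\tau$ is cohomologous to $\tau$, so $k\cdot\tau-\tau\in B:=d_{CE}(\wedge^{\bullet-1}\mathfrak g^*)$. The subspace $B$ is finite-dimensional, hence closed, and the map $k\mapsto k\cdot\tau-\tau$ is continuous. Therefore $P\tau-\tau=\int_{\widehat K}(k\cdot\tau-\tau)\,dk\in B$. Together with $P\circ i=\mathrm{id}$, this shows that $i$ and $P$ are mutually inverse on cohomology, so the inclusion is a quasi-isomorphism.

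I expect the only real subtlety to be the passage from $\operatorname{Ad}(\widetilde G)$ to its closure, and through it to the possibly disconnected group $\widehat K$. I handle it with the closedness of the "acts trivially on cohomology" condition, rather than by trying to prove that $\widehat K$ is connected.
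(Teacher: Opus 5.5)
Your proposal is correct and follows essentially the same route as the paper. Both arguments average over the compact stabilizer $\widehat K$, with the key input that every $A\in\overline{\operatorname{Ad}(\widetilde G)}$ acts trivially on $H^\bullet(\mathfrak g)$, which follows from Cartan's homotopy formula together with closedness of the kernel of the induced representation; your explicit checks that $(A^{-1})^*$ commutes with $d_{CE}$ and that $P\tau-\tau$ lies in the image of $d_{CE}$ simply fill in steps the paper leaves implicit.
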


\begin{proof}
For every $(\phi,A)\in\widehat K$, one has
$A\in\overline{\operatorname{Ad}(\widetilde G)}$. Inner automorphisms act
trivially on $H^\bullet(\mathfrak g)$ by Cartan's homotopy formula. Since the
induced representation on $H^\bullet(\mathfrak g)$ is continuous, its kernel
is closed. It follows that every
$A\in\overline{\operatorname{Ad}(\widetilde G)}$ also acts trivially on
$H^\bullet(\mathfrak g)$.

Define
\[
P_{\widehat K}(\tau)
:=
\int_{\widehat K}(A^{-1})^*\tau\,dn.
\]
This is a cochain projection onto
$(\wedge^\bullet\mathfrak g^*)^{\widehat K}$ and induces the identity on
$H^\bullet(\mathfrak g)$. Hence every cohomology class has an
$\widehat K$-invariant representative.

If an invariant cocycle $\tau$ is exact in
$\wedge^\bullet\mathfrak g^*$, say $\tau=d_{CE}\nu$, then
\[
\tau
=
P_{\widehat K}(\tau)
=
d_{CE}P_{\widehat K}(\nu).
\]
Thus $\tau$ is already exact in the invariant subcomplex.
\end{proof}

We can now complete the proof of Theorem~\ref{thm:liefoliation}.

\begin{proof}
Proposition~\ref{direct-lie-averaging} implies that the inclusion
\[
\Omega^\bullet(M,\mathcal F)^{\mathfrak g}
\hookrightarrow
\Omega^\bullet(M,\mathcal F)
\]
is a quasi-isomorphism. Indeed, surjectivity in cohomology follows by
averaging closed representatives. If an invariant cocycle $\beta$ satisfies
$\beta=d\gamma$, then
$
\beta
=
\mathcal A_{\mathrm{Lie}}(\beta)
=
d\,\mathcal A_{\mathrm{Lie}}(\gamma)$, 
which proves injectivity.

Lemma~\ref{invariant-coefficient-complex} and
Lemma~\ref{stabilizer-quasi-isomorphism} now give
\[
\begin{aligned}
H^\bullet(M,\mathcal F)
&\cong
H^\bullet\bigl(
\Omega^\bullet(M,\mathcal F)^{\mathfrak g}
\bigr)\\
&\cong
H^\bullet\bigl(
(\wedge^\bullet\mathfrak g^*)^{\widehat K},
d_{CE}
\bigr)\\
&\cong
H^\bullet(\mathfrak g).
\end{aligned}
\]
\end{proof}

Historically, the conclusion of Theorem~\ref{thm:liefoliation} appears to be
well known in the case where a complete Lie foliation $\mathcal F$ on a manifold $M$ has a dense leaf. Let $G$ be the
simply connected Lie group with Lie algebra $\mathfrak g$. By
Fedida's Darboux covering construction for a complete Lie $\mathfrak g$-foliation
(see \cite[Chapter~4.2]{Mo88}), the basic de Rham complex
$(\Omega^\bullet(M,\mathcal F),d)$ is naturally identified with $(\Omega^\bullet(G)^\Gamma,d)$,
where $\Gamma$ is a discrete subgroup of $G$---called the holonomy group of the
Darboux covering---acting on $G$ on the left. When $\mathcal F$ has a dense
leaf, $\Gamma$ is dense in $G$, and a continuity argument yields
\[
(\Omega^\bullet(M,\mathcal F),d)
\;\cong\;(\Omega^\bullet(G)^\Gamma,d)
\;=\;(\Omega^\bullet(G)^G,d)
\;=\;(C_{CE}^\bullet(\mathfrak g),d),
\]
so the basic cohomology of $\mathcal F$ coincides with $H^\bullet(\mathfrak g)$.

A second case, perhaps less well known and apparently not recorded in the
literature, arises when $\mathfrak g$ is semisimple of compact type. Then
$\mathfrak g$ integrates to a compact simply connected Lie group $G$, and
$H:=\overline{\Gamma}$ is a compact and connected subgroup of $G$. Since
$\Omega^\bullet(G)^\Gamma=\Omega^\bullet(G)^H$, averaging over the compact group
$H$ shows that
\[
H^\bullet(\Omega^\bullet(G)^H,d)\cong H^\bullet(\mathfrak g).
\]
The main novelty of Theorem~\ref{thm:liefoliation} lies in the general compact
type case, where $\mathfrak g$ may possess a nontrivial center.

\begin{ex}
We show that the compact-type hypothesis in
Theorem~\ref{thm:liefoliation} is essential, even for the point foliation.

Let
\[
L=\mathrm{PSL}_2(\mathbb R),
\]
and let $\Gamma\subset L$ be a torsion-free cocompact lattice. Set
\[
M=\Gamma\backslash L,
\]
and let $\mathcal F$ be the point foliation on $M$. Then $(M,\mathcal F)$ is a Lie $\mathfrak l$-foliation, where
$
\mathfrak l=\operatorname{Lie}(L)=\mathfrak{sl}_2(\mathbb R)$. 

Indeed, the right Maurer--Cartan form on $L$ is left-invariant, hence descends
to an $\mathfrak l$-valued $1$-form on $M$. Since $\mathcal F$ is the point
foliation, this descended form has kernel $0=T\mathcal F$, is pointwise an
isomorphism, and satisfies the Maurer--Cartan equation.

The leaf closure space of $(M,\mathcal F)$ is just $M$ itself, and is therefore
compact. On the other hand, $\mathfrak l=\mathfrak{sl}_2(\mathbb R)$ is not of
compact type.

Since $\mathcal F$ is the point foliation, its basic cohomology coincides with
ordinary de Rham cohomology:
\[
H^\bullet(M,\mathcal F)=H^\bullet_{\mathrm{dR}}(M).
\]
Moreover, $M$ is diffeomorphic to the unit tangent bundle $T^1\Sigma$ of a
closed hyperbolic surface $\Sigma$. Hence, by the Gysin sequence of the circle
bundle
\[
S^1\longrightarrow T^1\Sigma\longrightarrow \Sigma,
\]
we have
\[
H^1(M,\mathcal F)=H^1_{\mathrm{dR}}(M)\neq 0.
\]
By contrast, Whitehead's lemma gives
\[
H^1(\mathfrak l)=H^1(\mathfrak{sl}_2(\mathbb R))=0.
\]
Therefore
\[
H^\bullet(M,\mathcal F)\not\cong H^\bullet(\mathfrak l).
\]

Thus the conclusion of Theorem~\ref{thm:liefoliation} fails without the
compact-type hypothesis, even in the case of the point foliation.
\end{ex}

\section{The de Rham cohomology of $G/H$}
\label{De Rham of G/H}

Throughout this section, we use $L$ and $R$ to denote left and right group
actions, respectively. We write $\mathfrak g$ and $\mathfrak h$ for the Lie
algebras of a Lie group $G$ and a Lie subgroup $H$, respectively, and let
$\mathcal F$ denote the foliation on $G$ whose leaves are the orbits of the
right $H$-action. The main results of this section are
Theorems~\ref{thm:GH1} and~\ref{thm:GH}.

We write $H^\bullet_{\mathrm{dR}}(X)$ for the diffeological de Rham cohomology
of a diffeological space $X$. When $X$ is a smooth manifold equipped with its
canonical manifold diffeology, the diffeological de Rham complex is canonically
isomorphic to the usual de Rham complex; thus this notation agrees with ordinary
de Rham cohomology in the manifold case. We refer the reader to \cite{L26} for
the minimal background on diffeology needed here, and to \cite{I13} for a
detailed exposition.

A theorem of Hector, Mac\'{\i}as-Virg\'os, and
Sanmart\'{\i}n-Carb\'on~\cite[Theorem~3.5]{HMS} identifies the diffeological
de Rham cohomology of the leaf space of a foliation, equipped with its quotient
diffeology, with the basic cohomology of the foliation. In the present setting,
the leaf space is precisely the diffeological quotient $G/H$. The computation
of $H^\bullet_{\mathrm{dR}}(G/H)$ therefore reduces to a computation in basic
cohomology, for which the transverse averaging mechanism developed above is
the key new ingredient. We begin with a technical lemma.

\begin{lem}\label{basic-relative}
Let $H$ be a connected Lie subgroup of a Lie group $G$. Then restriction to
the identity induces a canonical isomorphism of cochain complexes
\[
\Omega^\bullet(G,\mathcal F)^{L(G)} \cong C^\bullet_{CE}(\mathfrak g,\mathfrak h).
\]\end{lem}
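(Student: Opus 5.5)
The plan is to start from the classical identification of left-invariant forms on $G$ with $\wedge^\bullet\mathfrak g^*$, and then to show that the basic conditions for $\mathcal F$ correspond exactly to the relative conditions defining $C^\bullet_{CE}(\mathfrak g,\mathfrak h)$.

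\emph{Step 1: left-invariant forms.} Restriction to the identity,
\[
\omega\mapsto\omega_e,
\]
is an isomorphism $\Omega^\bullet(G)^{L(G)}\cong\wedge^\bullet\mathfrak g^*$. The inverse sends $\tau$ to the unique left-invariant form with value $\tau$ at $e$. Evaluating $d\omega$ on left-invariant vector fields $\xi^L$ kills the function terms, because $\omega(\xi_1^L,\dots,\xi_k^L)$ is constant. Using $[\xi^L,\eta^L]=[\xi,\eta]^L$, the remaining terms give exactly $d_{CE}$ as written in Section~\ref{sec:background}. So this map is an isomorphism of complexes. Sign conventions should be checked here, but the formula for $d_{CE}$ in the paper matches the standard Cartan formula with the function terms dropped.

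\emph{Step 2: tangent fields and the interior-product condition.} The leaves of $\mathcal F$ are the cosets $gH$. Their tangent spaces $T_g(gH)=(L_g)_*\mathfrak h$ are spanned by the left-invariant fields $X^L$ with $X\in\mathfrak h$, which generate the right $H$-action. Every vector field tangent to $\mathcal F$ is a $C^\infty$-combination of these $X^L$. For a left-invariant $\omega$ we have $\iota_{fX^L}\omega=f\,\iota_{X^L}\omega$, which is again left-invariant with value $f(e)\,\iota_X\omega_e$ at $e$. So the condition $\iota_Y\omega=0$ for all $Y\in\mathfrak X(\mathcal F)$ is equivalent to $\iota_X\omega_e=0$ for all $X\in\mathfrak h$.

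\emph{Step 3: the Lie-derivative condition.} Once $\iota_Y\omega=0$ for tangent $Y$, Cartan's formula gives
\[
\mathcal L_{fX^L}\omega=df\wedge\iota_{X^L}\omega+f\,\mathcal L_{X^L}\omega=f\,\mathcal L_{X^L}\omega .
\]
So it suffices to test $\mathcal L_{X^L}$ for $X\in\mathfrak h$. The form $\mathcal L_{X^L}\omega$ is left-invariant, since left and right translations commute, and its value at $e$ is $(\iota_Xd_{CE}+d_{CE}\iota_X)\omega_e$ by Step 1. The paper records this expression as $\operatorname{ad}^*(X)\omega_e$; alternatively it is $\frac{d}{dt}\big|_{t=0}R_{\exp tX}^*\omega$, which is the coadjoint action at $e$. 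Hence the Lie-derivative condition is equivalent to $\operatorname{ad}^*(X)\omega_e=0$ for all $X\in\mathfrak h$.

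Combining Steps 1--3, restriction to $e$ maps $\Omega^\bullet(G,\mathcal F)^{L(G)}$ bijectively onto $C^\bullet_{CE}(\mathfrak g,\mathfrak h)$ and intertwines $d$ with $d_{CE}$. The connectedness of $H$ is not needed for these infinitesimal conditions, since basicness is defined by vector fields. It only ensures that the leaves are the cosets $gH$.

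The main obstacle is the sign and convention matching: that the left-invariant Lie derivative along $\mathfrak h$ at $e$ is exactly $\operatorname{ad}^*(X)$ in the paper's normalization, and that $d$ corresponds to $d_{CE}$ rather than $-d_{CE}$. I would settle this by checking directly on left-invariant fields via the bracket identity, and cross-check it against the identity $\operatorname{ad}^*(X)=d_{CE}\iota_X+\iota_Xd_{CE}$ stated in the paper.
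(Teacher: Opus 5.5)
Your proposal is correct and is essentially the paper's proof. You identify left-invariant forms with $\wedge^\bullet\mathfrak g^*$, observe that $T\mathcal F$ is framed by the left-invariant fields $X^L$ ($X\in\mathfrak h$) generating the right $H$-action, and translate $\iota_{X^L}$ and $\mathcal L_{X^L}$ at $e$ into $\iota_X$ and $\operatorname{ad}^*(X)=d_{CE}\iota_X+\iota_X d_{CE}$; your Steps~2--3 spell out the reduction from arbitrary $Y\in\mathfrak X(\mathcal F)$ to these generators, which the paper leaves implicit, and there is one harmless wording slip there (it is $\iota_{X^L}\omega$, not $f\,\iota_{X^L}\omega$, that is left-invariant).
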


\begin{proof}
Restriction to the identity induces the standard isomorphism of cochain complexes
\[
\Omega^\bullet(G)^{L(G)} \xrightarrow{\;\cong\;} \wedge^\bullet\mathfrak g^*,
\qquad \omega \mapsto \omega_e,
\]
under which the de Rham differential corresponds to the Chevalley--Eilenberg differential.

Since the leaves of $\mathcal{F}$ are the right $H$-orbits, we have
\[
T_g\mathcal F=(dL_g)_e(\mathfrak h).
\]
Equivalently, for each $X\in\mathfrak h$, the fundamental vector field on $G$
generated by the right $H$-action is
\[
\widetilde X(g):=\left.\frac{d}{dt}\right|_{t=0} g\exp(tX)=(dL_g)_eX.
\]
Thus $\mathcal{F}$ is generated by the vector fields $\widetilde X$, with $X\in\mathfrak h$.
Hence a left-invariant form $\omega$ is $\mathcal{F}$-basic if and only if
\[
\iota_{\widetilde X}\omega=0,\qquad \mathcal L_{\widetilde X}\omega=0
\quad\text{for all }X\in\mathfrak h.
\]
Under restriction to the identity, these conditions become
\[
\iota_X(\omega_e)=0,\qquad \operatorname{ad}^*(X)(\omega_e)=0
\quad\text{for all }X\in\mathfrak h.
\]
Therefore $\Omega^\bullet(G,\mathcal F)^{L(G)}$ identifies with
$C^\bullet_{CE}(\mathfrak g,\mathfrak h)$.
\end{proof}

We now proceed to the proof of Theorem~\ref{thm:GH1}.

\begin{proof}[Proof of Theorem~\ref{thm:GH1}]
Since $\mathfrak g$ is of compact type, it admits an
$\operatorname{ad}$-invariant positive-definite inner product. As $G$ is
connected, this inner product is $\operatorname{Ad}(G)$-invariant and therefore
defines a bi-invariant Riemannian metric $g_G$ on $G$. In particular, $g_G$ is
complete.

The leaves of $\mathcal F$ are the orbits of the right $H$-action. Since right
translations by elements of $H$ are isometries of $g_G$, the metric $g_G$ is
bundle-like with respect to $\mathcal F$. Thus $(G,\mathcal F)$ is a complete
Riemannian foliation. The left action of $G$ on itself commutes with the right $H$-action and
therefore preserves $\mathcal F$. Since left translations are also isometries
of $g_G$, they preserve the induced transverse Riemannian metric.
Consequently, differentiating the left $G$-action yields an isometric
transverse $\mathfrak g$-action on $(G,\mathcal F)$.

The closure of the leaf through $g\in G$ is $g\overline H$, so the leaf
closure space is $G/\overline H$, which is compact by assumption. Hence all
the hypotheses of Theorem~\ref{average-operator} are satisfied. The left action of $G$ preserves the foliation and hence acts on
$\Omega^\bullet(G,\mathcal F)$. Since $G$ is connected, a basic form is
invariant under the transverse $\mathfrak g$-action described above if and only if it is invariant under all left
translations. Thus
\[
\Omega^\bullet(G,\mathcal F)^{\mathfrak g}
=
\Omega^\bullet(G,\mathcal F)^{L(G)},
\]
and the transverse averaging operator takes values in this common
subcomplex.
It follows from Theorem~\ref{average-operator} that the inclusion
\[
\bigl(\Omega^\bullet(G,\mathcal F)^{L(G)},d\bigr)
\hookrightarrow
\bigl(\Omega^\bullet(G,\mathcal F),d\bigr)
\]
is a quasi-isomorphism.

By Lemma~\ref{basic-relative}, restriction to the identity gives an
isomorphism of cochain complexes
\[
\bigl(\Omega^\bullet(G,\mathcal F)^{L(G)},d\bigr)
\cong
\bigl(C^\bullet_{CE}(\mathfrak g,\mathfrak h),d_{CE}\bigr).
\]
On the other hand, the theorem of Hector, Mac\'{\i}as-Virg\'os, and
Sanmart\'{\i}n-Carb\'on~\cite[Theorem~3.5]{HMS} gives
\[
H^\bullet_{\mathrm{dR}}(G/H)
\cong
H^\bullet(G,\mathcal F).
\]
Combining these identifications, we obtain
\[ H^\bullet_{\mathrm{dR}}(G/H)
\cong H^\bullet(G,\mathcal F) 
\cong
H^\bullet\bigl(\Omega^\bullet(G,\mathcal F)^{L(G)},d\bigr) 
\cong H^\bullet(\mathfrak g,\mathfrak h).
\]
\end{proof}

The following result shows that when $\mathfrak h$ is an ideal of
$\mathfrak g$, the compactness assumption on $G/\overline{H}$ in
Theorem~\ref{thm:GH} automatically implies that $\mathfrak g/\mathfrak h$ is of
compact type.

\begin{lem}\label{relationship}
Let $G$ be a connected Lie group and $H$ a connected Lie subgroup. Assume that
$\mathfrak h$ is an ideal of $\mathfrak g$ and that $G/\overline{H}$ is compact.
Then $\mathfrak g/\mathfrak h$ is of compact type.
\end{lem}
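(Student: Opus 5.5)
The plan is to show that $\overline H$ is a normal subgroup of $G$, so that $K:=G/\overline H$ is a compact connected Lie group. Then $\mathfrak g/\mathfrak h$ will be identified with a subalgebra, or a quotient, of a compact-type Lie algebra built from $\mathrm{Lie}(K)$.

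First, since $\mathfrak h$ is an ideal and $G$ is connected, $\operatorname{Ad}(g)\mathfrak h=\mathfrak h$ for all $g\in G$. Indeed, $\operatorname{Ad}(\exp X)=e^{\operatorname{ad}X}$ preserves $\mathfrak h$, and $G$ is generated by $\exp\mathfrak g$. Hence $gHg^{-1}$ is the connected subgroup with Lie algebra $\mathfrak h$, namely $H$ itself. So $H$ is normal, and therefore $\overline H$ is a closed normal subgroup, connected because it is the closure of a connected set. Consequently $K=G/\overline H$ is a Lie group. It is compact by hypothesis and connected because $G$ is. Its Lie algebra is $\mathfrak k=\mathfrak g/\bar{\mathfrak h}$, where $\bar{\mathfrak h}=\mathrm{Lie}(\overline H)\supseteq\mathfrak h$ is again an ideal. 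Since $K$ is compact, $\mathfrak k$ is of compact type.

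It remains to pass from $\mathfrak g/\bar{\mathfrak h}$ to $\mathfrak g/\mathfrak h$. The key point is that $\bar{\mathfrak h}/\mathfrak h$ is abelian and central in $\mathfrak g/\mathfrak h$. For this I would use the standard fact that the closure of a connected normal (or merely connected) Lie subgroup $H$ has $[\bar{\mathfrak h},\bar{\mathfrak h}]\subseteq\mathfrak h$. More strongly, $[\mathfrak g,\bar{\mathfrak h}]\subseteq\mathfrak h$ when $H$ is normal. The argument runs as follows. The adjoint action of $G$ on $\mathfrak g/\mathfrak h$ is trivial on $H$, because $\operatorname{Ad}(\exp Y)=e^{\operatorname{ad}Y}$ with $\operatorname{ad}Y(\mathfrak g)\subseteq\mathfrak h$ for $Y\in\mathfrak h$. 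This action is continuous, so it is also trivial on $\overline H$. Differentiating gives $[\bar{\mathfrak h},\mathfrak g]\subseteq\mathfrak h$. Hence $\mathfrak a:=\bar{\mathfrak h}/\mathfrak h$ is a central ideal of $\mathfrak q:=\mathfrak g/\mathfrak h$, and $\mathfrak q/\mathfrak a\cong\mathfrak k$ is of compact type.

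Finally, a central extension $0\to\mathfrak a\to\mathfrak q\to\mathfrak k\to0$ with $\mathfrak k$ of compact type is itself of compact type. Write $\mathfrak k=\mathfrak z\oplus\mathfrak s$ with $\mathfrak s$ semisimple. The extension restricted to $\mathfrak s$ splits by Whitehead's second lemma (equivalently Levi's theorem), giving a subalgebra $\mathfrak s'\cong\mathfrak s$ in $\mathfrak q$. The $\mathfrak s'$-module $\mathfrak q$ decomposes, and the preimage $\mathfrak r$ of $\mathfrak z$ satisfies $[\mathfrak r,\mathfrak r]\subseteq\mathfrak a$. I also expect $[\mathfrak q,\mathfrak q]$ to map isomorphically onto $\mathfrak s$, using $H^2(\mathfrak s)=0$ and the fact that central elements act trivially. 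This would yield $\mathfrak q\cong\mathfrak s\oplus(\text{abelian})$, which is of compact type.

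The step I expect to be delicate is showing that $\mathfrak r$ is abelian, that is, that the bracket $\wedge^2\mathfrak z\to\mathfrak a$ vanishes. My first attempt would be to exploit the group: this bracket is a 2-cocycle on the abelian $\mathfrak z$ coming from a connected group $\overline H/H$-extension. An alternative is to argue via $[\mathfrak g,\mathfrak g]$ and compactness of the torus part of $K$. If needed, I would instead invoke density, namely that $H$ is dense in $\overline H$, to show $[\bar{\mathfrak h},\bar{\mathfrak h}]\subseteq\mathfrak h$, combined with the fact that $\mathfrak z$ lifts into $\bar{\mathfrak h}+\text{center}$.
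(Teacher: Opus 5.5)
Your first two steps are sound: $H$ and $\overline H$ are normal, $\mathfrak k=\mathfrak g/\bar{\mathfrak h}$ is of compact type, and the triviality of $\overline{\mathrm{Ad}}$ on $\overline H$ does give $[\bar{\mathfrak h},\mathfrak g]\subseteq\mathfrak h$. The gap is the final step. The general claim that a central extension of a compact-type Lie algebra is again of compact type is false. The Heisenberg algebra with $[x,y]=z$ is a central extension of the abelian (hence compact-type) algebra $\mathbb R^2$ by $\mathbb R z$. Yet an $\operatorname{ad}$-invariant inner product on it would force $\langle z,z\rangle=\langle[x,y],z\rangle=-\langle y,[x,z]\rangle=0$.

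Your Levi/Weyl reduction correctly isolates the issue as the vanishing of the bracket $\wedge^2\mathfrak z\to\mathfrak a$. However, nothing in the abstract extension data forces this: $H^2(\mathfrak s)=0$ says nothing about $\wedge^2\mathfrak z^*$. The remedies you list are not arguments. In particular, ``$\mathfrak z$ lifts into $\bar{\mathfrak h}+\text{center}$'' just restates what must be proved. Passing to the Lie algebra of $G/\overline H$ throws away exactly the information that excludes Heisenberg-type extensions: $G$ acts through a compact group on $\mathfrak g/\mathfrak h$ itself, not merely on $\mathfrak k$.

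You already have the missing idea in hand. Since $\overline{\mathrm{Ad}}\colon G\to\mathrm{GL}(\mathfrak g/\mathfrak h)$ is trivial on $\overline H$, it is constant on the cosets $g\overline H$. Hence it factors through the compact space $G/\overline H$, and its image is a compact subgroup of $\mathrm{GL}(\mathfrak g/\mathfrak h)$. Averaging any inner product over this image gives an $\overline{\mathrm{Ad}}(G)$-invariant inner product on $\mathfrak g/\mathfrak h$. Differentiating yields $\langle[X,u],v\rangle+\langle u,[X,v]\rangle=0$ for all $X\in\mathfrak g$ and $u,v\in\mathfrak g/\mathfrak h$, which is exactly $\operatorname{ad}$-invariance.

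This is the paper's proof. It makes the normality of $\overline H$, the algebra $\mathfrak g/\bar{\mathfrak h}$, and the extension analysis unnecessary. The same inner product would also repair your route: for $X$ in the preimage $\mathfrak r$ of $\mathfrak z$, $\operatorname{ad}X$ maps $\mathfrak g/\mathfrak h$ into the central ideal $\mathfrak a$. So $\operatorname{ad}X$ is nilpotent and skew-symmetric, hence zero. By then, though, the detour is superfluous.
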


\begin{proof}
Since $G$ is connected and $\mathfrak h$ is an ideal of $\mathfrak g$, the
adjoint action of $G$ preserves $\mathfrak h$, and hence induces a
representation
\[
\overline{\mathrm{Ad}}:G\to \mathrm{GL}(\mathfrak g/\mathfrak h),\qquad
\overline{\mathrm{Ad}}_g(X+\mathfrak h)=\mathrm{Ad}_g(X)+\mathfrak h.
\]
If $\xi\in \mathfrak h$, then
\[
d(\overline{\mathrm{Ad}})_e(\xi)(X+\mathfrak h)=[\xi,X]+\mathfrak h=0,
\]
because $\mathfrak h$ is an ideal. Thus $\overline{\mathrm{Ad}}$ is trivial on the
connected subgroup $H$, and therefore also on $\overline{H}$ by continuity.
Hence $\overline{\mathrm{Ad}}$ factors through the compact quotient
$G/\overline{H}$. It follows that the image
$K:=\overline{\mathrm{Ad}}(G)$ is a compact subgroup of
$\mathrm{GL}(\mathfrak g/\mathfrak h)$.

Choose any inner product $\langle \cdot,\cdot\rangle_0$ on
$\mathfrak g/\mathfrak h$, and average it over $K$:
\[
\langle u,v\rangle:=\int_K \langle ku,kv\rangle_0\,dk.
\]
Then $\langle \cdot,\cdot\rangle$ is a positive definite $K$-invariant inner
product on $\mathfrak g/\mathfrak h$. Differentiating the $K$-invariance shows
that it is $\mathrm{ad}$-invariant. Therefore $\mathfrak g/\mathfrak h$ is of
compact type.
\end{proof}

\begin{lem}\label{isometry}
Suppose that $G$ is connected, $H$ is a connected Lie subgroup of $G$, $\mathfrak h$ is an ideal of $\mathfrak g$, and $G/\overline{H}$ is compact. Then $(G,\mathcal F)$ is a complete Lie $(\mathfrak g/\mathfrak h)$-foliation of compact type.
\end{lem}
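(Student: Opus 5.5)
The natural Maurer--Cartan form to use comes from the left Maurer--Cartan form of $G$ followed by the quotient map. Let $\omega_L\in\Omega^1(G,\mathfrak g)$ be the left Maurer--Cartan form, so $(\omega_L)_g=(dL_{g^{-1}})_g$, and let $q:\mathfrak g\to\mathfrak g/\mathfrak h$ be the quotient map. Since $\mathfrak h$ is an ideal, $q$ is a Lie algebra homomorphism, and I will set
\[
\alpha:=q\circ\omega_L\in\Omega^1(G,\mathfrak g/\mathfrak h).
\]
The plan is to verify the three conditions of Definition~\ref{lie-g-foliation} for $\alpha$, then produce a complete bundle-like metric, and finally note that compact type is Lemma~\ref{relationship}.

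\textbf{Step 1: conditions (i)--(iii).} Surjectivity of $\alpha_g$ is immediate, because $\omega_L$ is pointwise an isomorphism onto $\mathfrak g$ and $q$ is onto. For the kernel, $\ker\alpha_g=(dL_g)_e(\mathfrak h)$, and by the computation in the proof of Lemma~\ref{basic-relative} this equals $T_g\mathcal F$. For the Maurer--Cartan equation, apply the homomorphism $q$ to $d\omega_L+\tfrac12[\omega_L,\omega_L]=0$ to get $d\alpha+\tfrac12[\alpha,\alpha]=0$. (I will keep an eye on the sign convention of \eqref{MC-eq} relative to the left-invariant form; if a sign appears, I will compose with $-\mathrm{id}$ or use the right Maurer--Cartan form instead, which does not affect the foliation.)

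\textbf{Step 2: completeness.} This is the main obstacle, because $\mathfrak g$ need not be of compact type, so there may be no bi-invariant metric on $G$. By Lemma~\ref{relationship}, $\mathfrak g/\mathfrak h$ carries an $\operatorname{ad}$-invariant inner product $\langle\cdot,\cdot\rangle$. By \eqref{in-metric} this gives the transverse metric $g_T(u,v)=\langle\alpha(u),\alpha(v)\rangle$, which is invariant under left translations because $\alpha$ is left-invariant. I then need a complete bundle-like metric on $G$ inducing $g_T$. I will build a left-invariant one: choose a complement $\mathfrak m$ of $\mathfrak h$ in $\mathfrak g$, put the inner product on $\mathfrak m\cong\mathfrak g/\mathfrak h$, put any inner product on $\mathfrak h$, declare $\mathfrak m\perp\mathfrak h$, and left-translate. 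The result is a left-invariant Riemannian metric, hence complete, since it is homogeneous.

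It remains to check that this metric is bundle-like, and this is the part to verify carefully. Its restriction to the normal bundle is $g_T$, so the question is whether $g_T$ is holonomy invariant, that is, whether $\mathcal L_{\widetilde X}g_T=0$ for $X\in\mathfrak h$. The flow of $\widetilde X$ is right translation $R_{\exp tX}$. Under $R_a$, the form $\alpha$ transforms by $R_a^*\alpha=\overline{\mathrm{Ad}}_{a^{-1}}\alpha$. By the argument in the proof of Lemma~\ref{relationship}, $\overline{\mathrm{Ad}}$ is trivial on $H$, so $\alpha$, and hence $g_T$, is invariant under right $H$-translations. Therefore $g_T$ is a transverse Riemannian metric, and a metric whose normal part is holonomy invariant is bundle-like by the standard characterization.

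\textbf{Step 3: compact type.} The foliation is a Lie $(\mathfrak g/\mathfrak h)$-foliation by Step 1, it is complete by Step 2, and $\mathfrak g/\mathfrak h$ is of compact type by Lemma~\ref{relationship}. This completes the proof of Lemma~\ref{isometry}.
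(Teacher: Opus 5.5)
Your proposal is correct and takes the same route as the paper: $\alpha$ is the Maurer--Cartan form of $G$ composed with the quotient map $\mathfrak g\to\mathfrak g/\mathfrak h$, and compact type comes from Lemma~\ref{relationship}. The paper calls its form the ``right'' Maurer--Cartan form $\theta_G$, but its $\mathrm{PSL}_2(\mathbb R)$ example shows it means the left-invariant form, which is your $\omega_L$.

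The one difference is that the paper simply asserts completeness. Your Step~2 justifies it: the left-invariant metric with $\mathfrak m\perp\mathfrak h$ is complete by homogeneity, and it is bundle-like because $\alpha$, and hence $g_T$, is invariant along the leaves. This is a useful addition to the paper's argument, not a change of method.
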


\begin{proof}
Let $\theta_G$ be the right Maurer--Cartan form on $G$, and let
$p:\mathfrak g\to \mathfrak g/\mathfrak h$ be the natural quotient map.
Define a $(\mathfrak g/\mathfrak h)$-valued $1$-form $\alpha$ on $G$ by
\[
\alpha = p\circ \theta_G.
\]
It is straightforward to verify that $\alpha$ satisfies the Maurer--Cartan equation in the sense of Definition~\ref{lie-g-foliation}. Hence $\alpha$ endows $(G,\mathcal F)$ with the structure of a complete Lie $(\mathfrak g/\mathfrak h)$-foliation. The fact that this foliation is of compact type follows from Lemma~\ref{relationship}.
\end{proof}

We are now ready to prove Theorem~\ref{thm:GH}.

\begin{proof}[Proof of Theorem~\ref{thm:GH}]
Note that the leaf space of the foliation $\mathcal F$ is precisely the
homogeneous space $G/H$. It follows from \cite[Theorem 3.5]{HMS} that the
diffeological de Rham cohomology $H^\bullet_{\mathrm{dR}}(G/H)$ is canonically
isomorphic to the basic cohomology $H^\bullet(G,\mathcal F)$.

By Lemma~\ref{isometry}, $(G,\mathcal F)$ is a complete Lie
$(\mathfrak g/\mathfrak h)$-foliation with compact leaf closure space.
Therefore Theorem~\ref{thm:liefoliation} gives
\[
H^\bullet(G,\mathcal F)\cong H^\bullet(\mathfrak g/\mathfrak h).
\]
Combining these two identifications yields
\[
H^\bullet_{\mathrm{dR}}(G/H)\cong H^\bullet(\mathfrak g/\mathfrak h).
\]
\end{proof}

The following two examples serve complementary purposes. The first is a
counterexample showing that the compactness hypothesis in
Theorem~\ref{thm:GH} is sharp: without this condition, the conclusion may
fail even in simple cases. The second is a concrete positive example
showing that the theorem applies to connected Lie subgroups that are
neither closed nor dense, even when $\mathfrak g/\mathfrak h$ is
nonabelian and $G$ is noncompact. Throughout, we write
$H^\bullet_{\mathrm{dR}}(G/H)$ for the diffeological de Rham cohomology
of $G/H$ equipped with its quotient diffeology.
\begin{ex}
We present an example showing that compactness of the
quotient $G/\overline H$ is essential. Let
$G=\mathbb R\times SU(2)$ and $H=SU(2)$. Then $G$ is connected,
$H$ is a connected closed normal Lie subgroup, and
$\mathfrak g=\mathbb R\oplus \mathfrak{su}(2)$,
$\mathfrak h=\mathfrak{su}(2)$, so
$\mathfrak g/\mathfrak h\cong \mathbb R$, which is of compact type.
However, $\overline H=H$ and $G/H\cong \mathbb R$, so
$G/\overline H$ is noncompact.

Since $G/H\cong \mathbb R$, we have
$H^1_{\mathrm{dR}}(G/H)=0$, whereas
$H^1(\mathfrak g/\mathfrak h)\cong \mathbb R$. Thus
$H^\bullet_{\mathrm{dR}}(G/H)\not\cong H^\bullet(\mathfrak g/\mathfrak h)$.
\end{ex}

\begin{ex}
Let $V=\mathbb C^3$, regarded as a real vector space, and let
$K:=U(3)\times S^1$ act on $V$ by $(B,z)\cdot v=zBv$. Set
$G:=V\rtimes K$. Then $G$ is connected and noncompact.

Let $I_3$ denote the $3\times3$ identity matrix. The center of $K$ is
$T:=Z(K)=\{(\zeta I_3,\eta):\zeta,\eta\in S^1\}$. Choose
$\lambda\in\mathbb R\setminus\mathbb Q$, and set
\[
L:=\{(e^{2\pi it}I_3,e^{2\pi i\lambda t}):t\in\mathbb R\}\subset T,
\qquad H:=V\rtimes L\subset G.
\]
Since $\lambda$ is irrational, $L$ is dense in $T$. Thus $H$ is a
connected immersed Lie subgroup of $G$ and
$\overline H=V\rtimes T$. In particular, $H$ is neither closed nor dense
in $G$, and $G/H$ is non-Hausdorff.

We verify the hypotheses of Theorem~1.5. Write
$\mathfrak g=V_{\mathbb R}\rtimes(\mathfrak u(3)\oplus\mathbb RE)$,
where $E$ generates the Lie algebra of the second $S^1$-factor. Let
$Z=iI_3$, so that
$\mathfrak u(3)=\mathfrak{su}(3)\oplus\mathbb RZ$, and set
$\Xi:=Z+\lambda E$. The Lie algebra of $H$ is
$\mathfrak h=V_{\mathbb R}\rtimes\mathbb R\Xi$.

Since $\Xi$ is central in $\mathfrak u(3)\oplus\mathbb RE$, for
$X\in\mathfrak u(3)\oplus\mathbb RE$, $v,w\in V_{\mathbb R}$, and
$a\in\mathbb R$, we have
\[
[(v,X),(w,a\Xi)]
=(X\cdot w-a\Xi\cdot v,0)\in V_{\mathbb R}\subset\mathfrak h.
\]
Hence $\mathfrak h$ is an ideal in $\mathfrak g$.

The homomorphism $\pi:G\to PU(3)$ given by $\pi(v,B,z)=[B]$ has kernel
$V\rtimes T=\overline H$. Therefore $G/\overline H\cong PU(3)$, which
is compact. Furthermore,
\[
\mathfrak g/\mathfrak h
\cong(\mathfrak u(3)\oplus\mathbb RE)/\mathbb R\Xi
\cong\mathfrak{su}(3)\oplus\mathbb R.
\]
Theorem~1.5 therefore applies. Since $\mathfrak h$ is an ideal in
$\mathfrak g$, the relative Chevalley--Eilenberg complex of
$(\mathfrak g,\mathfrak h)$ identifies naturally with the
Chevalley--Eilenberg complex of $\mathfrak g/\mathfrak h$. Consequently,
\[
H^\bullet_{\mathrm{dR}}(G/H)
\cong H^\bullet(\mathfrak g/\mathfrak h)
\cong H^\bullet(\mathfrak g,\mathfrak h)
\cong\Lambda(\alpha_1,\alpha_3,\alpha_5),
\qquad \deg\alpha_j=j,
\]
where $\alpha_1$ is the degree-one generator arising from the
$\mathbb R$-summand of $\mathfrak g/\mathfrak h\cong
\mathfrak{su}(3)\oplus\mathbb R$, while $\alpha_3$ and $\alpha_5$ are
the standard primitive generators of $H^\bullet(\mathfrak{su}(3))$.\end{ex}

\end{document}